\documentclass[11pt]{article}

\date{}
\title{Discrete dynamical systems in group theory}
\author{Dikran Dikranjan \and Anna Giordano Bruno}

\usepackage{vmargin}
\setmarginsrb{28mm}{20mm}{28mm}{25mm}%
          {10mm}{7mm}{10mm}{10mm}

\usepackage[english]{babel}
\usepackage{amsmath,amsfonts,amssymb,amsthm}
\usepackage{latexsym}
\usepackage{hyperref}

\numberwithin{equation}{section}

\newtheorem{Definition}{Definition}[section]
\newtheorem{Example}[Definition]{Example}
\newtheorem{Lemma}[Definition]{Lemma}
\newtheorem{Theorem}[Definition]{Theorem}
\newtheorem{Remark}[Definition]{Remark}
\newtheorem{Proposition}[Definition]{Proposition}
\newtheorem{Corollary}[Definition]{Corollary}
\newtheorem{fact}[Definition]{Fact}
\newtheorem{question}[Definition]{Question}
\newtheorem{problem}[Definition]{Problem}

\input xy
\xyoption{all}

\def\mod{\mathbf{Mod}}
\def\id{\mathrm{id}}

\def\V{{\cal V}}
\def\W{{\cal W}}
\def\sF{{\cal F}}

\def\sH{{\cal H}}
\def\La{{\cal L}}
\def\sC{{\cal C}}
\def\AG{{\mathbf{AbGrp}}}
\def\CT{{\mathbf{CTop}}}

\def\ent{{\mathrm{ent}}}
\def\CAG{{\mathbf{CAbGrp}}}
\def\MS{{\mathbf{MesSp}}}

\def\cov{\mathfrak{cov}}

\newcommand{\N}{\mathbb N}
\newcommand{\Z}{\mathbb Z}
\newcommand{\Q}{\mathbb Q}
\newcommand{\R}{\mathbb R}

\def\SL{{\mathfrak{L}}}
\def\Se{{\mathfrak{S}}}

\def\ent{\mathrm{ent}}

\def\sM{\mathcal{H}}

\def\U{{\cal U}}
\def\f{\phi}

\def\V{{\cal V}}
\def\W{{\cal W}}
\def\End{\mathrm{End}}

\begin{document}

\maketitle

\abstract{
In this expository paper we describe the unifying approach for many known entropies in Mathematics developed in \cite{DGV1}.

First we give the notion of semigroup entropy $h_\Se:\Se\to\R_+$ in the category $\Se$ of normed semigroups and contractive homomorphisms, recalling also its properties from \cite{DGV1}. 
For a specific category $\mathfrak X$ and a functor $F:\mathfrak X\to \Se$ we have the entropy $h_F$, defined by the composition $h_F=h_\Se\circ F$, which automatically satisfies the same properties proved for $h_\Se$.
This general scheme permits to obtain many of the known entropies as $h_F$, for appropriately chosen categories $\mathfrak X$ and functors $F:\mathfrak X\to \Se$.
%in this general scheme. 
%We explain also how this approach was inspired by the fact that many of the known entropies present very similar features.

In the last part we recall the definition and the fundamental properties of the algebraic entropy for group endomorphisms, noting how its deeper properties depend on the specific setting. Finally we discuss the notion of growth for flows of groups, comparing it with the classical notion of growth for finitely generated groups.}

%\keywords{entropy, normed semigroup, semigroup entropy, bridge theorem, algebraic entropy, growth, Milnor Problem.}
%\classification{16B50,%Category-theoretic methods and results
%20M15,%Mappings of semigroups
%20K30,% Abelian groups - Automorphisms, homomorphisms, endomorphisms, etc.
%20F65,%Geometric group theory
%%22D05,%General properties and structure of locally compact groups
%22D35,%Duality theorems
%%22D40,%Ergodic theory on groups
%%28D20,%Entropy and other invariants (in Measure-theoretic ergodic theory)
%37A35,%Entropy and other invariants, isomorphism, classification (in Dynamical systems and ergodic theory)
%37B40,%Topological entropy
%54C70,%Entropy (in Basic constructions)
%%54H11,%Topological groups
%55U30% Duality (in Applied homological algebra and category theory)
%}

%\maketitle

%\tableofcontents

\section{Introduction} 

This paper covers the series of three talks given by the first named author at the conference ``Advances in Group Theory and Applications 2011'' held in June, 2011 in Porto Cesareo. It is a survey about entropy in Mathematics, the approach is the categorical one adopted in \cite{DGV1} (and announced in \cite{D}, see also \cite{LoBu}).

\medskip
We start Section 4 recalling that a \emph{flow} in a category $\mathfrak X$ is a pair $(X,\phi)$, where $X$ is an object of $\mathfrak X$ and $\phi: X\to X$ is a morphism in $\mathfrak X$.
A morphism between two flows $\phi: X\to X$ and $\psi: Y\to Y$ is a morphism $\alpha: X \to Y$ in $\mathfrak X$ such that  the diagram
$$\xymatrix{X\ar[r]^{\alpha} \ar[d]_{\phi}&Y\ar[d]^{\psi}\\X\ar[r]^{\alpha}& Y.}$$
commutes. This defines the category $\mathbf{Flow}_{\mathfrak X}$ of flows in $\mathfrak X$. 

To classify flows in $\mathfrak X$ up to isomorphisms one uses invariants, and entropy is roughly a numerical invariant associated to flows. 
Indeed, letting $\R_{\geq 0} = \{r\in \R: r\geq 0\}$  and $\R_+= \R_{\geq 0}\cup \{\infty\}$, by the term \emph{entropy} we intend a function 
\begin{equation}\label{dag}
h: \mathbf{Flow}_{\mathfrak X}\to \R_+,
\end{equation}
obeying the invariance law $h(\phi) = h(\psi)$ whenever $(X,\phi)$ and $(Y,\psi)$ are isomorphic flows.
%$\alpha:(X,\phi)\to(Y,\psi)$ is an isomorphism in $\mathbf{Flow}_{\mathfrak X}$.
The value $h(\phi)$ is supposed to measure the degree to which $X$ is ``scrambled" by $\phi$,  so for example an entropy should assign $0$ to all identity maps. 
For simplicity and with some abuse of notations, we adopt the following

\medskip
\noindent\textbf{Convention.}
If $\mathfrak X$ is a category and $h$ an entropy of $\mathfrak X$, writing $h: {\mathfrak X}\to \R_+$ we always mean $h: \mathbf{Flow}_{\mathfrak X}\to \R_+$ as in \eqref{dag}.
\medskip

The first notion of entropy in Mathematics was the measure entropy $h_{mes}$ introduced by Kolmogorov \cite{K} and Sinai \cite{Sinai} in 1958 in Ergodic Theory. The topological entropy $h_{top}$ for continuous self-maps of compact spaces was defined by Adler, Konheim and McAndrew \cite{AKM} in 1965. Another notion of topological entropy $h_B$ for uniformly continuous self-maps of metric spaces
was given later by Bowen \cite{B} (it coincides with $h_{top}$ on compact metric spaces). Finally, entropy was taken also in Algebraic Dynamics by Adler, Konheim and McAndrew \cite{AKM} in 1965 and Weiss \cite{W} in 1974; they defined an entropy $\ent$ for endomorphisms of torsion abelian groups. Then Peters \cite{P} in 1979 introduced its extension $h_{alg}$ to automorphisms of abelian groups; finally $h_{alg}$ was defined in \cite{DG} and \cite{DG-islam} for any group endomorphism. Recently also a notion of algebraic entropy for module endomorphisms was introduced in \cite{SZ}, namely the algebraic $i$-entropy $\ent_i$, where $i$ is an invariant of a module category. Moreover, the adjoint algebraic entropy $\ent^\star$ for group endomorphisms was investigated in \cite{DGV} (and its topological extension in \cite{G}). Finally, one can find in \cite{AZD} and \cite{DG-islam} two ``mutually dual'' notions of entropy for self-maps of sets, namely the covariant set-theoretic entropy $\mathfrak h$ and the contravariant set-theoretic entropy $\mathfrak h^*$.

The above mentioned  specific entropies determined the choice of the main cases considered  in this paper. Namely, $\mathfrak X$ will be one of the following categories (other examples can be found in \S\S \ref{NewSec1} and \ref{NewSec2}): 
\begin{itemize}
\item[(a)] $\mathbf{Set}$ of sets and maps and its non-full subategory $\mathbf{Set}_{\mathrm{fin}}$ of sets and finite-to-one maps (set-theoretic entropies $\mathfrak h$ and $\mathfrak h^*$ respectively);
\item[(b)] $\mathbf{CTop}$ of compact topological spaces and continuous maps (topological entropy $h_{top}$);
\item[(c)] $\mathbf{Mes}$ of probability measure spaces and measure preserving maps (measure entropy $h_{mes}$);
\item[(d)] $\mathbf{Grp}$ of groups and group homomorphisms and its subcategory $\mathbf{AbGrp}$ of abelian groups (algebraic entropy $\ent$, algebraic entropy $h_{alg}$ and adjoint algebraic entropy $\ent^\star$);
\item[(e)] $\mathbf{Mod}_R$ of right modules over a ring $R$ and $R$-module homomorphisms (algebraic $i$-entropy $\ent_i$). 
\end{itemize}
Each of these entropies has its specific definition, usually given by limits computed on some ``trajectories'' and by taking the supremum of these quantities (we will see some of them explicitly). 
The proofs of the basic properties take into account the particular features of the specific categories in each case too. It appears that all these definitions and basic properties share a lot of common features. The aim of our approach is to unify them in some way, starting from a general notion of entropy of an appropriate category. This will be the semigroup entropy $h_\Se$ defined on the category $\Se$ of normed semigroups. 

\medskip
In Section \ref{sem-sec} we first introduce the category $\Se$ of normed semigroups and related basic notions and examples mostly coming from \cite{DGV1}. Moreover, in \S \ref{preorder-sec} (which can be avoided at a first reading) we add a preorder to the semigroup and discuss the possible behavior of a semigroup norm with respect to this preorder. Here we include also the subcategory $\mathfrak L$ of $\Se$ of normed semilattices, as 
the functors given in Section \ref{known-sec} often have as a target actually a normed semilattice. 
 
In \S \ref{hs-sec} we define explicitly the semigroup entropy $h_\Se: \Se \to \R_+$ on the category $\Se$ of normed semigroups. Moreover we list all its basic properties, clearly inspired by those of the known entropies, such as Monotonicity for factors, Invariance under conjugation, Invariance under inversion, Logarithmic Law, Monotonicity for subsemigroups, Continuity for direct limits, Weak Addition Theorem and Bernoulli normalization.

\medskip
Once defined the semigroup entropy $h_\Se:\Se\to \R_+$, our aim is to obtain all known entropies $h:{\mathfrak X} \to \R_+$ as a composition $h_F:=h_\Se \circ F$ of a functor $F: \mathfrak X\to \Se$ and $h_\Se$:
\begin{equation*}
\xymatrix@R=6pt@C=37pt
{\mathfrak{X}\ar[dd]_{F}\ar[rrd]^{h=h_F}	&	&	\\
& &	\R_+		\\
\Se\ar[rru]_{h_\Se}						&	&
}
\end{equation*}
This is done explicitly in Section \ref{known-sec}, where all specific entropies listed above are obtained in this scheme. We dedicate to each of them a subsection, each time giving explicitly the functor from the considered category to the category of normed semigroups. More details and complete proofs can be found in \cite{DGV1}.
These functors and the entropies are summarized by the following diagram:
\begin{equation*}
\xymatrix@-1pc{
&&&\mathbf{Mes}\ar@{-->}[ddddr]|-{\mathfrak{mes}}\ar[ddddddr]|-{h_{mes}}& &\mathbf{AbGrp}\ar[ddddddl]|-{\mathrm{ent}}\ar@{-->}[ddddl]|-{\mathfrak{sub}}&&\\
& &\mathbf{CTop}\ar@{-->}[dddrr]|-{\mathfrak{cov}}\ar[dddddrr]|-{h_{top}}&  & & & \mathbf{Grp}\ar@{-->}[dddll]|-{\mathfrak{pet}}\ar[dddddll]|-{h_{alg}} & &\\
& \mathbf{Set}\ar@{-->}[ddrrr]|-{\mathfrak{atr}}\ar[ddddrrr]|-{\mathfrak h} && & &	&  & \mathbf{Grp}\ar@{-->}[ddlll]|-{\mathfrak{sub}^\star}\ar[ddddlll]|-{\ent^\star} \\
\mathbf{Set}_\mathrm{fin}\ar@{-->}[drrrr]|-{\mathfrak{str}}\ar[dddrrrr]|-{\mathfrak h^*} && && & &	&  &\mathbf{Mod}_R\ar@{-->}[dllll]|-{\mathfrak{sub}_i}\ar[dddllll]|-{\ent_i} \\
& && & \mathfrak S \ar[dd]|-{h_\mathfrak S} && & \\
 \\
& && &{\R_+}	 &	& &
}		
\end{equation*}
In this way we obtain a simultaneous and uniform definition of all entropies and uniform proofs (as well as a better understanding) of their general properties, namely the basic properties of the specific entropies can be derived directly from those proved for the semigroup entropy.

\medskip
The last part of Section \ref{known-sec} is dedicated to what we call Bridge Theorem (a term coined by L. Salce), that is roughly speaking a connection between entropies  $h_1:\mathfrak X_1 \to \R_+$ and $h_2:\mathfrak X_2 \to \R_+$ via functors $\varepsilon: \mathfrak X_1 \to \mathfrak X_2$. Here is a formal definition of this concept:

\begin{Definition}\label{BTdef}
Let $\varepsilon: \mathfrak X_1 \to \mathfrak X_2$ be a functor and let $h_1:\mathfrak X_1 \to \R_+$ and $h_2:\mathfrak X_2 \to \R_+$ be entropies of the categories $\mathfrak X_1 $ and $ \mathfrak X_2$, respectively (as in the diagram below).
\begin{equation*}\label{Buz}
\xymatrix@R=6pt@C=37pt
{\mathfrak{X}_1\ar[dd]_{\varepsilon}\ar[rrd]^{h_{1}}	&	&	\\
& &	\R_+		\\
\mathfrak{X}_2\ar[rru]_{h_{2}}						&	&
}
\end{equation*}We say that the pair $(h_1, h_2)$ satisfies the \emph{weak Bridge Theorem} with respect to the functor $\varepsilon$ 
if there exists a positive constant $C_\varepsilon$, such that for every endomorphism $\f$ in $\mathfrak X_1$
\begin{equation}\label{sBT}
h_2(\varepsilon(\f)) \leq C_\varepsilon h_1(\f).
\end{equation}
If equality holds in \eqref{sBT} we say that $(h_1,h_2)$ satisfies the \emph{Bridge Theorem} with respect to $\varepsilon$, and we shortly denote this by $(BT_\varepsilon)$.
\end{Definition}

In \S \ref{BTsec} we discuss the Bridge Theorem passing through the category $\Se$ of normed semigroups and so using the new semigroup entropy. This approach permits for example to find a new and transparent proof of Weiss Bridge Theorem (see Theorem \ref{WBT}) as well as for other Bridge Theorems.

\medskip
A first limit of this very general setting is the loss of some of the deeper properties that a specific entropy may have.
So in the last Section \ref{alg-sec} for the algebraic entropy we recall the definition and the fundamental properties, which cannot be deduced from the general scheme.

We start Section 4 recalling the Algebraic Yuzvinski Formula (see Theorem \ref{AYF}) recently proved in \cite{GV}, giving the values of the algebraic entropy of linear transformations of finite-dimensional rational vector spaces in terms of the Mahler measure. In particular, this theorem provides a connection of the algebraic entropy with the famous Lehmer Problem. Two important applications of the Algebraic Yuzvinski Formula are the Addition Theorem and the Uniqueness Theorem for the algebraic entropy in the context of abelian groups.

\medskip
In \S \ref{Growth-sec} we describe the connection of the algebraic entropy with the classical topic of growth of finitely generated groups  in Geometric Group Theory. Its definition was given  independently by Schwarzc \cite{Sch} and Milnor \cite{M1}, and after the publication of \cite{M1} it was intensively investigated; several fundamental results were obtained by Wolf \cite{Wolf}, Milnor \cite{M2}, Bass \cite{Bass}, Tits \cite{Tits} and Adyan \cite{Ad}. In \cite{M3} Milnor proposed his famous problem (see Problem \ref{Milnor-pb} below); the question about the existence of finitely generated groups with intermediate growth was answered positively by Grigorchuk in \cite{Gri1,Gri2,Gri3,Gri4}, while the characterization of finitely generated groups with polynomial growth was given by Gromov in \cite{Gro} (see Theorem \ref{GT}).

Here we introduce the notion of finitely generated flows $(G,\phi)$ in the category of groups and define the growth of $(G,\phi)$. When $\phi=\id_G$ is the identical endomorphism, then $G$ is a finitely generated group and we find exactly the classical notion of growth. In particular we recall a recent significant result from \cite{DG0} extending Milnor's dichotomy (between polynomial and exponential growth) to finitely generated flows in the abelian case (see Theorem \ref{DT}). We leave also several open problems and questions about the growth of finitely generated flows of groups.

The last part of the section, namely \S \ref{aent-sec}, is dedicated to the adjoint algebraic entropy. As for the algebraic entropy, we recall its original definition and its main properties, which cannot be derived from the general scheme. In particular, the adjoint algebraic entropy can take only the values $0$ and $\infty$ (no finite positive value is attained) and we see that the Addition Theorem holds only restricting to bounded abelian groups.

\medskip
A natural side-effect of the wealth of nice properties of the entropy $h_F=h_\Se\circ F$, obtained from the semigroup entropy $h_\Se$ through functors $F:\mathfrak X\to \Se$, is the loss of some entropies that do not have all these properties.
For example Bowen's entropy $h_B$ cannot be obtained as $h_F$ since $h_B(\phi^{-1})= h_B(\phi)$ fails even for the automorphism $\phi: \R \to \R$ defined by $\phi(x)= 2x$, see \S \ref{NewSec2} for an extended comment on this issue; there we also discuss the possibility to obtain Bowen's topological entropy of measure preserving topological automorphisms of locally compact groups in the framework of our approach. For the same reason other entropies that cannot be covered by this approach are the intrinsic entropy for endomorphisms of abelian groups \cite{DGSV} and the topological entropy for automorphisms of locally compact totally disconnected groups \cite{DG-tdlc}. This occurs also for the function $\phi \mapsto \log s(\phi)$, where $s(\phi)$ is the scale function defined by Willis \cite{Willis,Willis2}.
The question about the relation of the scale function to the algebraic or topological entropy was posed by T. Weigel at the conference; these non-trivial relations are discussed for the topological entropy in \cite{BDG}.

\section{The semigroup entropy}\label{sem-sec}

\subsection{The category $\Se$ of normed semigroups}

We start this section introducing the category $\Se$ of normed semigroups, and other notions that are fundamental in this paper.

\begin{Definition}\label{Def1}  
Let $(S,\cdot)$ be a semigroup. 
\begin{itemize}
\item[(i)] A \emph{norm} on $S$ is a map $v: S \to \R_{\geq 0}$ such that
\begin{equation*}%\label{v} 
v(x \cdot y) \leq v(x) + v(y)\ \text{for every}\ x,y\in S.
\end{equation*}
A  \emph{normed semigroup} is a semigroup provided with a norm.

If $S$ is a monoid, a \emph{monoid norm} on $S$ is a semigroup norm $v$ such that $v(1)=0$; in such a case $S$ is called \emph{normed monoid}.

\item[(ii)] A semigroup homomorphism $\f:(S,v)\to (S',v')$ between normed semigroups is \emph{contractive} if $$v'(\phi(x))\leq v(x)\ \text{for every}\ x\in S.$$
\end{itemize}
\end{Definition}

Let $\Se$ be the category of normed semigroups, which has as morphisms all contractive semigroup homomorphisms. In this paper, when we say that $S$ is a normed semigroup and $\phi:S\to S$ is an endomorphism, we will always mean that $\phi$ is a contractive semigroup endomorphism.
Moreover, let $\mathfrak M$ be the non-full subcategory of $\Se$ with objects all normed monoids, where the morphisms are all (necessarily contractive) monoid homomorphisms.

\medskip
We give now some other definitions.

\begin{Definition} 
A normed semigroup $(S,v)$ is:
\begin{itemize}
\item[(i)] \emph{bounded} if there exists $C\in \N_+$ such that $v(x) \leq C$ for all $x\in S$;
\item[(ii)]\emph{arithmetic} if for every $x\in S$ there exists a constant $C_x\in \N_+$ such that $v(x^n) \leq C_x\cdot  \log (n+1)$ for every $n\in\N$.
\end{itemize}
\end{Definition}

Obviously, bounded semigroups are arithmetic.

\begin{Example}\label{Fekete}
Consider the monoid $S = (\N, +)$.
\begin{itemize}
\item[(a)] Norms $v$ on $S$ correspond to  {subadditive sequences} $(a_n)_{n\in\N}$ in  $ \R_+$ 
(i.e., $a_{n + m}\leq a_n + a_m$) via 
$v \mapsto (v(n))_{n\in\N}$. Then $\lim_{n\to \infty} \frac{a_n}{n}= \inf_{n\in\N} \frac{a_n}{n}$ exists by Fekete Lemma \cite{Fek}.
\item[(b)]  Define $v: S \to \R_+$ by $v(x) =  \log (1+ x)$ for $x\in S$. Then 
%$\log (x + y ) \leq 2+ \log x + \log y =2 +  \log xy$, so 
$v$ is an arithmetic semigroup norm.  
%  Moreover, $(S, + , v)$ is \a \ %as $v(nx) =  \log(1 + nx)  \leq 3 x\cdot  \log (n+1)$ holds for all $n, x \in \N_+$.
\item[(c)] Define $v_1: S \to \R_+$ by $v_1(x) = \sqrt x$ for $x\in S$. Then $v_1$ is a  semigroup norm, but $(S, + , v_1)$ is not arithmetic.
\item[(d)] For $a\in \N$, $a>1$ let $v_a(n) = \sum_i b_i$, when $n= \sum_{i=0}^k b_ia^i$
and $0\leq b_i < a$ for all $i$. Then $v_a$ is an arithmetic norm on $S$ making the map $x\mapsto ax$ an endomorphism in $\Se$.  
\end{itemize}
\end{Example}

\subsection{Preordered semigroups and normed semilattices}\label{preorder-sec} 

A triple $(S,\cdot,\leq)$ is a \emph{preordered semigroup} if the semigroup $(S,\cdot)$ admits a preorder $\leq$ such that 
$$x\leq y\ \text{implies}\ x \cdot z \leq y \cdot z\ \text{and}\ z \cdot x \leq z \cdot y\ \text{for all}\ x,y,z \in S.$$
Write $x\sim y$ when $x\leq y$ and $y\leq x$ hold simultaneously.  Moreover, the \emph{positive cone} of $S$ is $$P_+(S)=\{a\in S:x\leq x \cdot a \ \text{and}\ x\leq a\cdot x\ \text{for every}\ x\in S\}.$$

A norm $v$ on the preordered semigroup $(S,\cdot,\leq)$ is \emph{monotone} if $x\leq y$ implies $v(x) \leq v(y)$ for every $x,y \in S$.  Clearly, $v(x) = v(y)$ whenever $x \sim y$ and the norm $v$ of $S$ is monotone.

Now we propose another notion of monotonicity for a semigroup norm which does not require the semigroup to be explicitly endowed with a preorder.
 
\begin{Definition}
Let $(S,v)$ be a normed semigroup. The norm $v$ is \emph{s-monotone} if $$\max\{v(x), v(y)\}\leq v(x \cdot y)\ \text{for every}\ x,y \in S.$$
\end{Definition}

This inequality may become a too stringent condition when $S$ is close to be a group; indeed, if $S$ is a group, then it implies that $v(S) = \{v(1)\}$, in particular $v$ is constant.

If $(S,+,v)$ is a commutative normed monoid, it admits a preorder $\leq^a$ defined for every $x,y\in S$ by
$x\leq^a y$ if and only if there exists $z\in S$ such that $x+z=y$. Then $(S,\cdot,\leq)$ is a {preordered semigroup} and
the norm $v$ is s-monotone if and only if $v$ is monotone with respect to $\leq^a$.

\medskip
The following connection between monotonicity and s-monotonicity is clear. 
 
\begin{Lemma} 
Let $S$ be a preordered semigroup. If $S=P_+(S)$, then every monotone norm of $S$ is also s-monotone.
\end{Lemma}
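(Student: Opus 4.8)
The plan is to unwind the definitions and chase a direct inequality. Suppose $v$ is a monotone norm on the preordered semigroup $S$ and assume $S = P_+(S)$; I want to show $\max\{v(x),v(y)\} \le v(x\cdot y)$ for all $x,y\in S$. Fix $x,y\in S$. Since $y\in S = P_+(S)$, the defining property of the positive cone gives $x \le x\cdot y$ (taking the element of $P_+(S)$ to be $y$ and the arbitrary element to be $x$). Similarly, since $x\in S = P_+(S)$, applying the positive-cone property with the roles reversed yields $y \le x\cdot y$ (here the element of $P_+(S)$ is $x$ and we use the inequality $z \le a\cdot z$ with $z = y$, $a = x$).

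Now invoke monotonicity of $v$: from $x \le x\cdot y$ we get $v(x) \le v(x\cdot y)$, and from $y \le x\cdot y$ we get $v(y)\le v(x\cdot y)$. Taking the maximum of the two left-hand sides gives $\max\{v(x),v(y)\} \le v(x\cdot y)$, which is exactly the s-monotonicity condition. Hence $v$ is s-monotone.

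There is essentially no obstacle here; the statement is a routine consequence of the definitions, and the only thing to be careful about is correctly matching the two halves of the positive-cone condition ($x \le x\cdot a$ and $x \le a\cdot x$) with the two elements $x$ and $y$ so that both $v(x)$ and $v(y)$ are bounded by $v(x\cdot y)$. One should also note that the argument uses both clauses in the definition of $P_+(S)$ — the left-multiplication clause for one variable and the right-multiplication clause for the other — so neither clause is redundant for this proof.
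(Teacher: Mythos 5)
Your proof is correct and is exactly the routine unwinding of the definitions that the paper has in mind (the paper states the lemma as ``clear'' and gives no proof). Applying the clause $x\le x\cdot a$ with $a=y$ and the clause $z\le a\cdot z$ with $a=x$, $z=y$, then using monotonicity of $v$, is the intended argument.
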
   
 
%All of our examples of normed semigroups are also preordered semigroups with $S= P_+(S)$\footnote{da rivedere dopo una lettura attenta di tutta la versione attuale}. Consequently,  the monotone semigroup norms are s-monotone in all these cases. 

%\bigskip
A \emph{semilattice} is a commutative semigroup $(S,\vee)$ such that $x\vee x=x$ for every $x\in S$.
 
\begin{Example} 
\begin{itemize}
\item[(a)] Each lattice $(L, \vee, \wedge)$ gives rise to two semilattices, namely $(L, \vee)$ and $(L, \wedge)$.
\item[(b)] A filter $\mathcal F$ on a given set $X$ is a semilattice with respect to the intersection, with zero element the set $X$. 
\end{itemize}
\end{Example}
 
Let $\SL$ be the full subcategory of $\Se$ with objects all normed semilattices. 
 
Every normed semilattice $(L,\vee)$ is trivially arithmetic, moreover the canonical partial order defined by $$x\leq y\ \text{if and only if}\ x\vee y=y,$$ 
for every $x,y\in L$, makes $L$ also a partially ordered semigroup.

\medskip
Neither preordered semigroups nor normed semilattices are formally needed for the definition of the semigroup entropy. Nevertheless, they provide significant and natural examples, as well as useful tools in the proofs, to justify our attention to this topic.

\subsection{Entropy in $\Se$}\label{hs-sec}

For $(S,v)$ a normed semigroup $\phi:S\to S$ an endomorphism, $x\in S$ and $n\in\N_+$ consider the \emph{$n$-th $\phi$-trajectory of $x$}
$$T_n(\f,x) = x \cdot\f(x)\cdot\ldots \cdot\f^{n-1}(x)$$
and let $$c_n(\f,x) = v(T_n(\f,x)).$$ 
Note that $c_n(\phi,x) \leq n\cdot v(x)$. Hence the growth of the function $n \mapsto c_n(\f,x)$ is at most linear. 

\begin{Definition}
Let $S$ be a normed semigroup. An  endomorphism $\f:S\to S$ is said to have \emph{logarithmic growth}, if for every $x\in S$ there exists $C_x\in\N_+$ with $c_n(\f,x) \leq C_x\cdot \log (n+1)$ for all $n\in\N_+$.
\end{Definition}
 
Obviously, a normed semigroup $S$ is arithmetic if and only if $\id_{S}$ has logarithmic growth.
 
\medskip
The following theorem from \cite{DGV1} is fundamental in this context as it witnesses the existence of the semigroup entropy; so we give its proof also here for reader's convenience.

\begin{Theorem}\label{limit}
Let $S$ be a normed semigroup and $\f:S\to S$ an endomorphism. Then for every $x \in S$ the limit 
\begin{equation}\label{hs-eq}
h_{\Se}(\f,x):= \lim_{n\to\infty}\frac{c_n(\f,x)}{n}
\end{equation}
 exists and satisfies $h_{\Se}(\f,x)\leq v(x)$.  
\end{Theorem}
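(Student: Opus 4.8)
The plan is to show that the sequence $(c_n(\f,x))_{n\in\N_+}$ is subadditive and then invoke Fekete Lemma exactly as recorded in Example \ref{Fekete}(a).

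First I would establish the ``cocycle'' identity
\begin{equation*}
T_{n+m}(\f,x) = T_n(\f,x)\cdot \f^{n}\bigl(T_m(\f,x)\bigr)\quad\text{for all }n,m\in\N_+.
\end{equation*}
This is immediate from the definition: $\f^{n}(T_m(\f,x)) = \f^{n}(x)\cdot\f^{n+1}(x)\cdots\f^{n+m-1}(x)$, so that multiplying $T_n(\f,x) = x\cdot\f(x)\cdots\f^{n-1}(x)$ on the right by it reconstitutes $x\cdot\f(x)\cdots\f^{n+m-1}(x) = T_{n+m}(\f,x)$. (Here one only uses associativity of $\cdot$ and that $\f$ is a semigroup homomorphism.)

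Next, applying the norm $v$ and using first its subadditivity and then the contractivity of $\f$ (hence of $\f^n$), I get
\begin{equation*}
c_{n+m}(\f,x) = v\bigl(T_n(\f,x)\cdot \f^n(T_m(\f,x))\bigr)\leq v(T_n(\f,x)) + v\bigl(\f^n(T_m(\f,x))\bigr)\leq c_n(\f,x) + c_m(\f,x).
\end{equation*}
Thus $(c_n(\f,x))_n$ is a subadditive sequence of non-negative reals, so by Fekete Lemma the limit in \eqref{hs-eq} exists and equals $\inf_{n\in\N_+}\frac{c_n(\f,x)}{n}$. Finally, since $c_n(\f,x)\leq n\cdot v(x)$ was already observed right after the definition of $c_n$, we get $\frac{c_n(\f,x)}{n}\leq v(x)$ for every $n$, whence $h_{\Se}(\f,x) = \inf_n \frac{c_n(\f,x)}{n}\leq v(x)$.

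I do not anticipate a genuine obstacle here; the only place requiring care is making sure contractivity of $\f$ is actually used (it is essential, since without it $v(\f^n(T_m(\f,x)))$ need not be controlled by $c_m(\f,x)$), and that the argument stays purely semigroup-theoretic, i.e. does not secretly assume the presence of an identity element or commutativity.
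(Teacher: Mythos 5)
Your proposal is correct and follows exactly the paper's own argument: the same cocycle identity $T_{n+m}(\f,x)=T_n(\f,x)\cdot\f^n(T_m(\f,x))$, the same use of subadditivity of $v$ followed by contractivity of $\f^n$ to get $c_{n+m}\leq c_n+c_m$, and then Fekete Lemma plus the bound $c_n(\f,x)\leq n\cdot v(x)$. Your closing remark correctly pinpoints contractivity as the essential ingredient, which the paper also emphasizes in the remark following the theorem.
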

\begin{proof} 
The sequence $(c_n(\f,x))_{n\in\N_+}$ is subadditive. %(i.e., satisfies $c_{n+m}(\f,x)\leq c_n(\f,x) +c_m(\f,x)$ for every $n,m\in \N_+$). 
Indeed,  
\begin{align*}  
c_{n+m}(\f,x)&= v(x\cdot\f(x)\cdot\ldots\cdot\f^{n-1}(x)\cdot\f^{n}(x)\cdot\ldots\cdot\f^{n+m-1}(x))\\  
&=v((x\cdot\f(x)\cdot\ldots\cdot\f^{n-1}(x))\cdot\f^{n}(x\cdot\ldots\cdot\f^{m-1}(x))) \\ 
&\leq c_n(\phi,x)+v(\f^{n}(x\cdot\ldots\cdot\f^{m-1}(x))) \\  
&\leq c_n(\f,x)+v(x\cdot\ldots\cdot\f^{m-1}(x))=c_n(\f,x)+c_m(\f,x).
\end{align*} 
By Fekete Lemma (see Example \ref{Fekete} (a)),  the limit $\lim_{n\to\infty} \frac{c_n(\f,x)}{n}$ exists and coincides with $\inf_{n\in\N_+} \frac{c_n(\f,x)}{n}$.
Finally, $h_{\Se}(\f,x)\leq v(x)$ follows from $c_n(\phi,x) \leq n v(x)$ for every $n\in\N_+$.
\end{proof}    

\begin{Remark}
\begin{itemize}
\item[(a)] The proof of the existence of the limit defining $h_{\Se}(\f,x) $ exploits the property of the semigroup norm and also the condition on $\phi$ to be contractive. 
For an extended comment on what can be done in case the function $v: S \to \R_+$ fails to have that property see \S \ref{NewSec1}.
\item[(b)] With $S = (\N,+)$, $\phi = \id_\N$ and $x=1$ in Theorem \ref{limit} we obtain exactly item (a) of Example \ref{Fekete}. 
\end{itemize}
\end{Remark}

\begin{Definition}\label{SEofEndos}    
Let $S$ be a normed semigroup and $\f:S\to S$ an endomorphism. The \emph{semigroup entropy} of $\f$ is $$h_{\Se}(\f)=\sup_{x\in S}h_{\Se}(\f,x).$$
\end{Definition}

%%%%%%%%%%%%%%%%%%%%%%%%%%%%%%%%%%%%%%%%%%%
 
If an endomorphism $\phi:S\to S$ has logarithmic growth, then $h_{\Se}(\f) = 0$. In particular, $h_{\Se}(\id_S)=0$ if $S$ is arithmetic. 
%Therefore one can easily see that the next lemma holds true, as a consequence of the Logarithmic Law.

Recall that an endomorphism $\phi:S\to S$ of a normed semigroup $S$ is \emph{locally quasi periodic} if for every $x\in S$ there exist $n,k\in\N$, $k>0$, such that $\phi^n(x)=\phi^{n+k}(x)$. If $S$ is a monoid and $\phi(1)=1$, then $\phi$ is \emph{locally nilpotent} if for every $x\in S$ there exists $n\in\N_+$ such that $\phi^n(x)=1$. 

\begin{Lemma}\label{locally}
Let $S$ be a normed semigroup and $\phi:S\to S$ an endomorphism. 
\begin{itemize}
\item[(a)]If $S$ is arithmetic and $\phi$ is locally periodic, then $h_\Se(\phi)=0$.
\item[(b)] If $S$ is a monoid and $\phi(1)=1$ and $\phi$ is locally nilpotent, then $h_\Se(\phi)=0$.
\end{itemize}
\end{Lemma}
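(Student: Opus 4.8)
The plan is to show in both cases that each $\phi$-trajectory $T_n(\phi,x)$ has only finitely many distinct values (or is controlled by the norm of finitely many elements), so that $c_n(\phi,x)$ stays bounded, and since $h_\Se(\phi,x)=\lim_n c_n(\phi,x)/n \le \inf_n c_n(\phi,x)/n$, boundedness of the numerator forces $h_\Se(\phi,x)=0$; taking the supremum over $x$ then gives $h_\Se(\phi)=0$.

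\medskip
\textbf{Part (b).} First I would treat the locally nilpotent case, which is the cleaner of the two. Fix $x\in S$ and pick $m\in\N_+$ with $\phi^m(x)=1$. Then for every $k\ge m$ we have $\phi^k(x)=\phi^{k-m}(\phi^m(x))=\phi^{k-m}(1)=1$, using $\phi(1)=1$. Hence for $n\ge m$,
$$T_n(\phi,x)=x\cdot\phi(x)\cdot\ldots\cdot\phi^{m-1}(x)\cdot 1\cdot\ldots\cdot 1 = T_m(\phi,x),$$
using that $1$ is the identity of the monoid. So $c_n(\phi,x)=c_m(\phi,x)$ for all $n\ge m$, the sequence $(c_n(\phi,x))_n$ is bounded, and $h_\Se(\phi,x)=0$. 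Since $x$ was arbitrary, $h_\Se(\phi)=0$.

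\medskip
\textbf{Part (a).} For the locally (quasi) periodic case I cannot collapse the tail of the trajectory to the identity, so instead I would exploit the arithmetic hypothesis. Fix $x\in S$; choose $n_0,k\in\N$, $k>0$, with $\phi^{n_0}(x)=\phi^{n_0+k}(x)$, hence $\phi^{n_0+j}(x)=\phi^{n_0+j+k}(x)$ for all $j\ge 0$, so the orbit $\{\phi^i(x):i\in\N\}$ is finite; write $y=\phi^{n_0}(x)$ and note $\phi^k(y)=y$. Split $T_n(\phi,x)$ (for $n>n_0$) as $T_{n_0}(\phi,x)\cdot \phi^{n_0}(T_{n-n_0}(\phi,x))=T_{n_0}(\phi,x)\cdot T_{n-n_0}(\phi,y)$, using $\phi^{n_0}(\phi^j(x))=\phi^j(y)$. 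By subadditivity of the norm, $c_n(\phi,x)\le c_{n_0}(\phi,x)+c_{n-n_0}(\phi,y)$, so it suffices to bound $c_m(\phi,y)$. Now $T_{km}(\phi,y)=(y\cdot\phi(y)\cdot\ldots\cdot\phi^{k-1}(y))^m=z^m$ where $z=T_k(\phi,y)$, because $\phi^k(y)=y$ makes the block of length $k$ repeat; hence $c_{km}(\phi,y)=v(z^m)\le C_z\cdot\log(m+1)$ by arithmeticity of $S$. For general $m$, writing $m=kq+r$ with $0\le r<k$ and using subadditivity of $c$, $c_m(\phi,y)\le c_{kq}(\phi,y)+c_r(\phi,y)\le C_z\log(q+1)+\max_{0\le r<k}c_r(\phi,y)$, which is $O(\log m)$. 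Therefore $c_n(\phi,x)=O(\log n)$, so $h_\Se(\phi,x)=\lim_n c_n(\phi,x)/n=0$, and taking the supremum over $x\in S$ gives $h_\Se(\phi)=0$.

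\medskip
The main obstacle is purely bookkeeping in part (a): correctly handling the non-periodic preperiod $T_{n_0}(\phi,x)$ and the remainder $r$ in the Euclidean division, and making sure the ``repeat the block'' identity $T_{km}(\phi,y)=z^m$ is used with the right index shifts (it relies on $\phi^k(\phi^j(y))=\phi^j(y)$, which follows from $\phi^k(y)=y$). Note that part (a) does not literally need the norm to be s-monotone or the semigroup to be commutative; only the definition of arithmetic and subadditivity of $(c_n(\phi,x))_n$ from Theorem~\ref{limit} are used, the latter being exactly what was proved there. One should also remark that ``locally periodic'' in the statement is meant in the sense of ``locally quasi periodic'' defined just above, i.e. $\phi^n(x)=\phi^{n+k}(x)$ — no genuine periodicity of $x$ itself is required.
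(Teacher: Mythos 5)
Your proof is correct and follows essentially the same route as the paper's: in (a) both arguments split the trajectory into a preperiodic head and a periodic tail that collapses to a power $y^m$ of a single element, then invoke arithmeticity to get a logarithmic bound, and (b) is identical. The only (harmless) difference is that the paper bounds $c_n(\phi,x)$ just along the subsequence $n=l+mk$ and uses the existence of the limit from Theorem~\ref{limit} to conclude, whereas you handle every $n$ by Euclidean division, which in fact shows the slightly stronger statement that $\phi$ has logarithmic growth.
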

\begin{proof}
(a) Let $x\in S$, and let $l,k\in\N_+$ be such that $\phi^l(x)=\phi^{l+k}(x)$. For every $m\in\N_+$ one has $$T_{l+mk}(\phi,x)=T_l(\phi,x)\cdot T_m(\id_S,y) = T_l(\phi,x)\cdot y^m,$$ where $y=\phi^l(T_k(\phi,x))$.
Since $S$ is arithmetic, there exists $C_x\in \N_+$ such that
\begin{equation*}\begin{split}
v(T_{l+mk}(\phi,x)) = v(T_l(\phi,x)\cdot y^m) \leq  \\ v(T_l(\phi,x)) + v( y^m) \leq  v(T_l (\phi,x)) + C_x\cdot\log (m+1),
\end{split}\end{equation*}
 so $\lim_{m\to \infty} \frac{v(T_{l+mk}(\phi,x))}{l+mk}=0$. Therefore we have found a subsequence of $(c_n(\phi,x))_{n\in\N_+}$ converging to $0$, hence also $h_\Se(\phi,x)=0$. Hence $h_\Se(\phi)=0$.

(b) For $x\in S$, there exists $n\in\N_+$ such that $\phi^n(x)=1$. Therefore $T_{n+k}(\phi,x)=T_n(\phi,x)$ for every $k\in\N$, hence $h_\Se(\phi,x)=0$.
\end{proof}

We discuss now a possible different notion of semigroup entropy. 
Let $(S,v)$ be a normed semigroup, $\phi:S\to S$ an endomorphism, $x\in S$ and $n\in\N_+$. One could define also the ``left'' $n$-th $\phi$-trajectory of $x$ as $$T_n^{\#}(\phi,x)=\phi^{n-1}(x)\cdot\ldots\cdot\phi(x)\cdot x,$$ changing the order of the factors with respect to the above definition.
With these trajectories it is possible to define another entropy letting $$h_\Se^{\#}(\phi,x)=\lim_{n\to\infty}\frac{v(T_n^{\#}(\phi,x))}{n},$$ and $$h_\Se^{\#}(\phi)=\sup\{h_\Se^{\#}(\phi,x):x\in S\}.$$
In the same way as above, one can see that the limit defining $h_\Se^{\#}(\phi,x)$ exists.

\medskip
Obviously $h_\Se^{\#}$ and $h_\Se$ coincide on the identity map and on commutative normed semigroups, but now we see that in general they do not take always the same values. Item (a) in the following example shows that it may occur the case that they do not coincide ``locally'', while they coincide ``globally''.
Moreover, modifying appropriately the norm in item (a), J. Spev\'ak found the example in item (b) for which $h_\Se^{\#}$ and $h_\Se$ do not coincide even ``globally''.

\begin{Example}
Let $X=\{x_n\}_{n\in\Z}$ be a faithfully enumerated countable set and let $S$ be the free semigroup generated by $X$. An element $w\in S$ is a word $w=x_{i_1}x_{i_2}\ldots x_{i_m}$ with $m\in\N_+$ and $i_j\in\Z$ for $j= 1,2, \ldots,  m$. In this case $m$ is called the {\em length} $\ell_X(w)$ of $w$, and a subword of $w$ is any $w'\in S$ of the form $w'=x_{i_k}x_{i_k+1}\ldots x_{i_l}$ with $1\le k\le l\le n$. 

Consider the automorphism $\phi:S\to S$ determined by $\phi(x_n)=x_{n+1}$ for every $n\in\Z$.

\begin{itemize}\label{ex-jan}
\item[(a)] Let $s(w)$ be the number of adjacent pairs $(i_k,i_{k+1})$ in $w$ such that $i_k<i_{k+1}$. The map $v:S\to\R_+$ defined by $v(w)=s(w)+1$ is a semigroup norm. 
Then $\phi:(S,v)\to (S,v)$ is an automorphism of normed semigroups.

It is straightforward to prove that, for $w=x_{i_1}x_{i_2}\ldots x_{i_m}\in S$,
\begin{itemize}
\item[(i)] $h_\Se^\#(\phi,w)=h_\Se(\phi,w)$ if and only if $i_1>i_m+1$;
\item[(ii)] $h_\Se^\#(\phi,w)=h_\Se(\phi,w)-1$ if and only if $i_m=i_1$ or $i_m=i_1-1$.
\end{itemize}
Moreover, 
\begin{itemize}
\item[(iii)]$h_\Se^\#(\phi)=h_\Se(\phi)=\infty$.
\end{itemize}
In particular, $h_\Se(\phi,x_0)=1$ while $h_\Se^\#(\phi,x_0)=0$.
%\end{Example}
%
%\begin{Example}[J. Spev\'ak]\label{ex-jan}
%Let $X=\{x_n\}_{n\in\Z}$ be a faithfully enumerated countable set and let $S$ be the free semigroup generated by $S$. An element $w\in S$ is a word $w=x_{i_1}x_{i_2}\ldots x_{i_m}$ with $m\in\N_+$ and $i_j\in\Z$ for $j= 1,2, \ldots, m$. In this case $m$ is called the {\em length} $\ell_X(w)$ of $w$. 

\item[(b)] Define a semigroup norm $\nu: S\to \R_+$ as follows. For $w=x_{i_1}x_{i_2}\ldots x_{i_n}\in S$ consider its subword $w'=x_{i_k}x_{i_{k+1}}\ldots x_{i_l}$ with maximal length satisfying $i_{j+1}=i_j+1$ for every $j\in\Z$ with $k\le j\le l-1$ and let $\nu(w)=\ell_X(w')$.
Then $\phi:(S,\nu)\to (S,\nu)$ is an automorphism of normed semigroups.

It is possible to prove that, for $w\in S$,
\begin{enumerate}
\item[(i)] if $\ell_X(w)=1$, then $\nu(T_n(\phi,w))=n$ and $\nu(T^\#_n(\phi,w))=1$ for every $n\in\N_+$;
\item[(ii)] if $\ell_X(w)=k$ with $k>1$, then $\nu(T_n(\phi,w))< 2k$ and $\nu(T^\#_n(\phi,w))< 2k $ for every $n\in\N_+$.
\end{enumerate}
From (i) and (ii) and from the definitions we immediately obtain that
\begin{itemize}
\item[(iii)] $h_\mathfrak{S}(\phi)=1\neq 0=h^\#_\mathfrak{S}(\phi)$. 
\end{itemize}
\end{itemize}
\end{Example}

We list now the main basic properties of the semigroup entropy. For complete proofs and further details see 	\cite{DGV1}.

\begin{Lemma}[Monotonicity for factors] 
Let $S$, $T$ be normed semigroups and $\f: S \to S$, $\psi:T\to T$ endomorphisms. If $\alpha:S\to T$ is a surjective homomorphism such that $\alpha \circ \psi = \f \circ \alpha$, then $h_{\Se}(\f) \leq h_{\Se}(\psi)$. 
\end{Lemma}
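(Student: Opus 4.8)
The plan is to unwind the definitions and reduce the inequality $h_{\Se}(\f)\leq h_{\Se}(\psi)$ to a pointwise comparison of trajectories, exploiting surjectivity of $\alpha$ and the intertwining relation $\alpha\circ\psi=\f\circ\alpha$. First I would fix $x\in S$ and, using surjectivity, pick $y\in T$ with $\alpha(y)=x$. The key observation is that $\alpha$ carries the $n$-th $\psi$-trajectory of $y$ onto the $n$-th $\f$-trajectory of $x$: iterating the intertwining relation gives $\f^{i}\circ\alpha=\alpha\circ\psi^{i}$ for all $i\in\N$, hence
\begin{equation*}
\alpha(T_n(\psi,y))=\alpha(y\cdot\psi(y)\cdot\ldots\cdot\psi^{n-1}(y))=\f^0(x)\cdot\f(x)\cdot\ldots\cdot\f^{n-1}(x)=T_n(\f,x),
\end{equation*}
using that $\alpha$ is a semigroup homomorphism.

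Next I would apply contractivity of $\alpha$ as a morphism in $\Se$: from $\alpha(T_n(\psi,y))=T_n(\f,x)$ and $v_T'(\alpha(s))\leq v_T(s)$ we get $c_n(\f,x)=v_S'(T_n(\f,x))\leq v_T(T_n(\psi,y))=c_n(\psi,y)$ for every $n\in\N_+$, writing $v_S',v_T$ for the norms on $S,T$ (I will use the paper's generic notation $v$ for both, as in Theorem \ref{limit}). Dividing by $n$ and passing to the limit via Theorem \ref{limit}, this yields $h_{\Se}(\f,x)\leq h_{\Se}(\psi,y)\leq h_{\Se}(\psi)$. Since $x\in S$ was arbitrary, taking the supremum over $x$ gives $h_{\Se}(\f)=\sup_{x\in S}h_{\Se}(\f,x)\leq h_{\Se}(\psi)$, as desired.

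There is no serious obstacle here; the only point requiring a little care is the bookkeeping that $\alpha$ is required to be a morphism of $\Se$, i.e. both a semigroup homomorphism (so it commutes with the product defining the trajectory) and contractive (so it does not increase norms) — the statement's hypothesis ``surjective homomorphism'' is understood in the category $\Se$, hence contractive by convention. One should also note that the choice of preimage $y$ is immaterial: the bound $c_n(\f,x)\le c_n(\psi,y)$ holds for every preimage, so any choice suffices. Thus the proof is essentially a three-line diagram chase combined with one invocation of the Fekete-based Theorem \ref{limit}.
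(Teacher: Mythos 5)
Your proof is correct and is essentially the paper's own argument: pick a preimage $y$ of $x$ under the surjection, observe that $\alpha$ maps $T_n(\psi,y)$ onto $T_n(\f,x)$ via the intertwining relation, use contractivity to get $c_n(\f,x)\leq c_n(\psi,y)$, and pass to the limit and supremum. You also (correctly, and more carefully than the statement itself) read $\alpha$ as a surjection $T\to S$, which is the only way the hypothesis $\alpha\circ\psi=\f\circ\alpha$ and the conclusion type-check; the paper's proof does the same.
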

\begin{proof} Fix $x\in S$ and find $y \in T$ with $x= \alpha(y)$. Then $c_n(x, \phi) \leq c_n(\psi, y)$.
%\begin{align*}
%c_n(x, \phi) &= c_n(\alpha(y), \f) = v(\alpha(y)+  \f(\alpha(y))+ \ldots +  \f^{n-1}(\alpha(y)))\\
%                     &= v(\alpha(y)+ \alpha(\psi(y))+ \ldots +  \alpha(\psi^{n-1}(y)))\\
%                     &= v(\alpha(y+ \psi(y)+ \ldots +  \psi^{n-1}(y)))\\
%                     &\leq v(y+ \psi(y)+ \ldots +  \psi^{n-1}(y))= c_n(\psi, y).
%\end{align*} 
Dividing by $n$ and taking the limit gives $h_{\Se}(\f,x) \leq h_{\Se}(\psi,y)$. So $h_{\Se}(\f,x)\leq h_{\Se}(\psi)$. When $x$ runs over $S$, we conclude that $h_{\Se}(\f) \leq h_{\Se}(\psi)$. 
\end{proof}    
 
\begin{Corollary}[Invariance under conjugation] 
Let $S$ be a normed semigroup and $\f: S \to S$ an endomorphism. If $\alpha:T\to S$ is an isomorphism, then $h_{\Se}(\f)=h_{\Se}(\alpha\circ\f\circ\alpha^{-1})$. 
\end{Corollary}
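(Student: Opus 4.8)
The plan is to derive Invariance under conjugation as a direct consequence of Monotonicity for factors, applied twice. Observe first that the hypothesis ``$\alpha: T \to S$ is an isomorphism'' means $\alpha$ is a bijective homomorphism in $\Se$ whose inverse $\alpha^{-1}: S \to T$ is also a morphism in $\Se$ (that is, both $\alpha$ and $\alpha^{-1}$ are contractive). Set $\psi := \alpha^{-1} \circ \f \circ \alpha : T \to T$; this is the endomorphism of $T$ obtained by transporting $\f$ along $\alpha$. Note that the statement to be proved is phrased with $\alpha \circ \f \circ \alpha^{-1}$, which presupposes an isomorphism $\alpha$ with domain $S$; up to renaming I will work with $\psi = \alpha^{-1}\circ \f \circ \alpha$ and show $h_\Se(\f) = h_\Se(\psi)$, which is the same assertion with the roles of the two objects swapped.

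The two key steps are then symmetric. First, from $\psi = \alpha^{-1} \circ \f \circ \alpha$ we get the intertwining relation $\alpha \circ \psi = \f \circ \alpha$ (multiply on the left by $\alpha$ and use $\alpha \circ \alpha^{-1} = \id_S$). Since $\alpha: T \to S$ is a surjective homomorphism, Monotonicity for factors yields $h_\Se(\f) \leq h_\Se(\psi)$. Second, rewrite the same relation as $\alpha^{-1} \circ \f = \psi \circ \alpha^{-1}$, so that $\alpha^{-1}: S \to T$ is a surjective homomorphism conjugating $\f$ to $\psi$ in the other direction; applying Monotonicity for factors again (with $\alpha^{-1}$ in the role of the surjection) gives $h_\Se(\psi) \leq h_\Se(\f)$. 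Combining the two inequalities gives $h_\Se(\f) = h_\Se(\psi)$, as desired.

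There is essentially no obstacle here: the only point requiring a moment's care is the bookkeeping of which map plays the role of the surjective $\alpha$ in each invocation of Monotonicity for factors, and checking that the commuting-square hypothesis is met in both directions — both are immediate once the relation $\alpha \circ \psi = \f \circ \alpha$ is written down. One should also remark explicitly that an isomorphism in $\Se$ has a contractive inverse, so that $\alpha^{-1}$ is genuinely a morphism of $\Se$ and Monotonicity for factors legitimately applies to it; this is the only place where the word ``isomorphism'' (as opposed to merely ``bijective contractive homomorphism'') is used. The proof is therefore a two-line deduction, and I would present it as such.

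\begin{proof}
Set $\psi = \alpha^{-1}\circ\f\circ\alpha: T\to T$; since $\alpha$ is an isomorphism in $\Se$, both $\alpha$ and $\alpha^{-1}$ are surjective homomorphisms. From $\psi = \alpha^{-1}\circ\f\circ\alpha$ we obtain $\alpha\circ\psi = \f\circ\alpha$, so Monotonicity for factors (applied to the surjection $\alpha: T\to S$) gives $h_\Se(\f)\leq h_\Se(\psi)$. The same identity reads $\psi\circ\alpha^{-1} = \alpha^{-1}\circ\f$, so Monotonicity for factors (applied now to the surjection $\alpha^{-1}: S\to T$) gives $h_\Se(\psi)\leq h_\Se(\f)$. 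Hence $h_\Se(\f)=h_\Se(\psi)=h_\Se(\alpha^{-1}\circ\f\circ\alpha)$, which is the claim (after renaming the isomorphism).
\end{proof}
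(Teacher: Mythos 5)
Your proof is correct and is exactly the argument the paper intends: the statement is presented as a Corollary of Monotonicity for factors, and applying that lemma twice (once to $\alpha$ and once to its contractive inverse $\alpha^{-1}$, both legitimate surjective morphisms in $\Se$ since $\alpha$ is an isomorphism) yields the two inequalities that give equality. Your remark about the domain/codomain bookkeeping is apt, as the paper's own notation for $\alpha$ is slightly inconsistent between the lemma and the corollary, and your renaming resolves it correctly.
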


\begin{Lemma}[Invariance under inversion]\label{inversion}
Let $S$ be a normed semigroup and $\phi:S\to S$ an automorphism. Then $h_{\Se}(\f^{-1})=h_{\Se}(\f)$. 
\end{Lemma}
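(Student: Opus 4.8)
The plan is to compare the trajectories of $\f$ and $\f^{-1}$ by applying the automorphism $\f$ to shift indices. Fix $x\in S$ and look at the $n$-th $\f^{-1}$-trajectory
$$T_n(\f^{-1},x)=x\cdot \f^{-1}(x)\cdot\ldots\cdot\f^{-(n-1)}(x).$$
Applying the automorphism $\f^{n-1}$ to this product gives
$$\f^{n-1}(T_n(\f^{-1},x))=\f^{n-1}(x)\cdot\f^{n-2}(x)\cdot\ldots\cdot x=T_n^{\#}(\f,x),$$
the ``left'' $n$-th $\f$-trajectory of $x$. Since $\f$ is an automorphism of normed semigroups, it and its inverse are both contractive, so $v(\f^{n-1}(T_n(\f^{-1},x)))=v(T_n(\f^{-1},x))$; hence $v(T_n(\f^{-1},x))=v(T_n^{\#}(\f,x))$ for every $n$. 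Dividing by $n$ and passing to the limit yields $h_\Se(\f^{-1},x)=h_\Se^{\#}(\f,x)$, and taking the supremum over $x\in S$ gives $h_\Se(\f^{-1})=h_\Se^{\#}(\f)$.

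So the statement reduces to showing $h_\Se^{\#}(\f)=h_\Se(\f)$ \emph{for automorphisms}. The natural move is to play the same game in the other direction: applying $\f^{-(n-1)}$ to $T_n^{\#}(\f,x)$ produces $x\cdot\f^{-1}(x)\cdot\ldots\cdot\f^{-(n-1)}(x)=T_n(\f^{-1},x)$ again, which only recovers what we already have. Instead I would relate $T_n^{\#}(\f,x)$ and $T_n(\f,x)$ directly: the ``left'' trajectory $T_n^{\#}(\f,x)=\f^{n-1}(x)\cdot\ldots\cdot\f(x)\cdot x$ is the ordinary $\f$-trajectory of the shifted point $y=\f^{n-1}(x)$ run with $\f^{-1}$, i.e. $T_n^{\#}(\f,x)=T_n(\f^{-1},\f^{n-1}(x))$. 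This still mixes things up because the base point depends on $n$. The cleanest route is therefore the symmetric one: by the first paragraph applied to $\f^{-1}$ in place of $\f$ we also get $h_\Se(\f)=h_\Se((\f^{-1})^{-1})=h_\Se^{\#}(\f^{-1})$, so it suffices to prove $h_\Se^{\#}(\f)=h_\Se^{\#}(\f^{-1})$, or equivalently to close the loop by any one identification of the form $h_\Se^{\#}(\psi)=h_\Se(\psi)$ valid for automorphisms.

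Concretely I would argue as follows. For an automorphism $\f$ and $x\in S$, set $x'=\f^{-1}(x)$; then $\f(x')=x$ and one checks
$$T_n(\f,x)=x\cdot\f(x)\cdot\ldots\cdot\f^{n-1}(x)=\f(x'\cdot\f(x')\cdot\ldots\cdot\f^{n-1}(x'))$$
is not quite $T_n^{\#}$, so rather than chase a pointwise identity I would use the already-proven inequalities. Monotonicity-type estimates give, for the automorphism $\f$, that $h_\Se(\f,\f^k(x))=h_\Se(\f,x)$ for all $k$ (apply the conjugation/contractivity argument), and similarly for $h_\Se^{\#}$; combined with $T_n^{\#}(\f,x)=T_n(\f^{-1},\f^{n-1}(x))$ and the fact that $\sup_{x}h_\Se(\f^{-1},\f^{n-1}(x))=\sup_x h_\Se(\f^{-1},x)=h_\Se(\f^{-1})$, taking suprema over $x$ and over the $n$-dependent base points yields $h_\Se^{\#}(\f)=h_\Se(\f^{-1})$. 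Then symmetry ($\f\leftrightarrow\f^{-1}$) together with the first paragraph's identity $h_\Se(\f^{-1})=h_\Se^{\#}(\f)$ forces $h_\Se(\f^{-1})=h_\Se(\f)$.

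I expect the main obstacle to be bookkeeping with the order of factors: the ``left'' trajectory $T_n^{\#}$ is genuinely different from $T_n$ for non-commutative $S$ (as the Spev\'ak example shows, $h_\Se^{\#}\neq h_\Se$ in general), so the proof must use crucially that $\f$ is invertible and that both $\f$ and $\f^{-1}$ are norm-preserving on automorphisms. The delicate point is ensuring that the index shift by $\f^{n-1}$ (or $\f^{-(n-1)}$) is harmless, which is exactly where norm-invariance of the automorphism is used, and making sure the supremum defining $h_\Se$ is unaffected by replacing each base point $x$ by $\f^{n-1}(x)$ — this is where the conjugation-invariance of $h_\Se(\f,-)$ along $\f$-orbits enters.
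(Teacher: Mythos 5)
Your first paragraph is correct and is in fact the sharpest thing one can say here: since $\f$ and $\f^{-1}$ are both contractive, $\f$ preserves the norm, and applying the norm-preserving map $\f^{n-1}$ to $T_n(\f^{-1},x)$ yields $T_n^{\#}(\f,x)$, so $h_\Se(\f^{-1})=h_\Se^{\#}(\f)$ and, replacing $\f$ by $\f^{-1}$, also $h_\Se(\f)=h_\Se^{\#}(\f^{-1})$. The gap is everything after that. These two identities are perfectly consistent with $h_\Se(\f)\neq h_\Se(\f^{-1})$, so the concluding appeal to ``symmetry'' is circular: your third paragraph, after the detour through $h_\Se(\f,\f^k(x))=h_\Se(\f,x)$ and the $n$-dependent base points, only re-derives $h_\Se^{\#}(\f)=h_\Se(\f^{-1})$ a second time and then declares the result. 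What is genuinely needed is the bridge $h_\Se^{\#}(\psi)=h_\Se(\psi)$ for automorphisms, which you correctly identify as the remaining task --- but that bridge fails in general. In item (b) of Example \ref{ex-jan} the shift $\phi(x_n)=x_{n+1}$ is a norm-preserving automorphism of $(S,\nu)$ with $h_\Se(\phi)=1$ and $h_\Se^{\#}(\phi)=0$; feeding this into your own identity gives $h_\Se(\phi^{-1})=h_\Se^{\#}(\phi)=0\neq 1=h_\Se(\phi)$. So no argument can close the loop for an arbitrary normed semigroup: by your first paragraph, invariance under inversion is \emph{equivalent} to $h_\Se=h_\Se^{\#}$ on automorphisms, and the lemma as printed needs an additional hypothesis (commutativity of $S$, or more generally $h_\Se(\f)=h_\Se^{\#}(\f)$) to be consistent with that example.

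Under such a hypothesis your first paragraph already finishes the proof: for commutative $S$ one has $T_n^{\#}(\f,x)=T_n(\f,x)$, hence $c_n(\f^{-1},x)=v\bigl(\f^{n-1}(T_n(\f^{-1},x))\bigr)=v(T_n(\f,x))=c_n(\f,x)$ for every $n$, and dividing by $n$ and taking suprema gives $h_\Se(\f^{-1})=h_\Se(\f)$. This covers the normed semigroups actually produced by the functors of Section \ref{known-sec}, which are commutative (indeed mostly semilattices). But in the stated generality the proposal cannot be repaired, because the step it reduces to is refuted by Example \ref{ex-jan}(b).
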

%\begin{proof} 
%It suffices to see that $h_{\Se}(\f^{-1},x)=h_{\Se}(\f,x)$ for each $x\in S$.   In order to compute $h_{\Se}(\f^{-1},x)$ note that 
%\begin{align*} c_n(\f,x) &=  v(x+ \f(x)+ \ldots + \f^{n-1}(x))
%&=v(\f^{n-1}(x+ \f^{-1}(x)+ \ldots + \f^{-n+1}(x))=\\
%&= v(x+  \f^{-1}(x)+\ldots + \f^{-n+1}(x))=c_n(\f^{-1},x).\end{align*}
%Now the definition of $h_\Se(\f^{-1},x)$ and $h_\Se(\f,x)$ applies. 
%\end{proof}  
   
\begin{Theorem}[Logarithmic Law] 
Let $(S,v)$ be a normed semigroup and $\f:S\to S$ an endomorphism.  Then 
$$
h_{\Se}(\f^{k})\leq k\cdot h_{\Se}(\f)
$$ 
for every $k\in \N_+$. Furthermore, equality holds if $v$ is s-monotone. Moreover, if $\phi:S\to S$ is an automorphism, then $$h_{\Se}(\phi^k) = |k|\cdot h_{\Se}(\phi)$$ for all $k \in \Z\setminus\{0\}$.
\end{Theorem}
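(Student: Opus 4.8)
The plan is to reduce the statement to a comparison of the $\f^k$-trajectories with the $\f$-trajectories obtained by collecting consecutive factors into blocks of length $k$, and to handle negative exponents via Invariance under inversion. The computational core is an identity for blocked trajectories: fix $w\in S$ and put $y=T_k(\f,w)=w\cdot\f(w)\cdots\f^{k-1}(w)$; since $\f^{jk}(y)=\f^{jk}(w)\cdot\f^{jk+1}(w)\cdots\f^{jk+k-1}(w)$, concatenating these $n$ blocks for $j=0,1,\dots,n-1$ reassembles $T_{nk}(\f,w)$, so that $T_n(\f^k,y)=T_{nk}(\f,w)$ and $c_n(\f^k,y)=c_{nk}(\f,w)$. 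Dividing by $n$, letting $n\to\infty$, and using that the limit in Theorem \ref{limit} exists (so the subsequence $(c_{nk}(\f,w)/(nk))_n$ converges to $h_{\Se}(\f,w)$), one gets $h_{\Se}(\f^k,T_k(\f,w))=k\cdot h_{\Se}(\f,w)$ for every $w\in S$; taking the supremum over $w$ then gives $h_{\Se}(\f^k)\ge k\cdot h_{\Se}(\f)$ with no extra hypothesis.

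For the reverse inequality one must estimate $h_{\Se}(\f^k,w)$ for an \emph{arbitrary} $w\in S$, not merely for the points $T_k(\f,w)$. The trajectory $T_n(\f^k,w)=w\cdot\f^k(w)\cdots\f^{(n-1)k}(w)$ arises from $T_{nk}(\f,w)=w\cdot\f(w)\cdots\f^{nk-1}(w)$ by deleting the factors in positions not divisible by $k$; if $v$ is s-monotone such deletions cannot increase the norm, hence $c_n(\f^k,w)\le c_{nk}(\f,w)$, and dividing by $n$, passing to the limit and taking the supremum over $w$ yields $h_{\Se}(\f^k)\le k\cdot h_{\Se}(\f)$. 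Together with the first paragraph this gives the equality $h_{\Se}(\f^k)=k\cdot h_{\Se}(\f)$ when $v$ is s-monotone; the general inequality needs a more careful version of the same comparison, using only what subadditivity of $v$ provides (see the last paragraph).

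For the final assertion, let $\f$ be an automorphism of $S$ and $k\in\Z\setminus\{0\}$. If $k>0$ this is the equality just obtained (an automorphism is in particular a contractive endomorphism). If $k<0$, then $\f^{k}=(\f^{|k|})^{-1}$ is again an automorphism, so Invariance under inversion (Lemma \ref{inversion}) gives $h_{\Se}(\f^{k})=h_{\Se}(\f^{|k|})$, which equals $|k|\cdot h_{\Se}(\f)$ by the positive-exponent case.

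The step I expect to be the main obstacle is the inequality $h_{\Se}(\f^k)\le k\cdot h_{\Se}(\f)$ without s-monotonicity: for a merely subadditive norm one cannot pass from a trajectory of $\f^k$ to the denser trajectory of $\f$ while controlling the norm, since a subproduct may have strictly larger norm than the whole product, and the crude bound $c_n(\f^k,w)\le n\cdot v(w)$ only gives $h_{\Se}(\f^k,w)\le v(w)$. Improving this to $k\cdot h_{\Se}(\f)$ is the delicate point, and it is precisely where s-monotonicity of $v$ is used; for negative exponents the further ingredient is the inversion law of Lemma \ref{inversion}.
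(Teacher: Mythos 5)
Your proof follows essentially the same route as the paper's: the unconditional inequality $h_{\Se}(\f^k)\ge k\cdot h_{\Se}(\f)$ via the blocked element $y=T_k(\f,x)$ and the identity $T_n(\f^k,y)=T_{nk}(\f,x)$, the reverse inequality under s-monotonicity by comparing $c_n(\f^k,x)$ with $c_{nk}(\f,x)$, and negative exponents via Lemma \ref{inversion}. Your suspicion about the unconditional ``$\le$'' is well placed but does not indicate a gap on your side: the paper's own proof likewise establishes only the unconditional ``$\ge$'' (the inequality sign in the statement is evidently a misprint) and obtains the other direction exactly as you do, from s-monotonicity.
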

\begin{proof} 
Fix $k \in \N_+$, $x\in S$ and let $y= x\cdot\f(x)\cdot\ldots\cdot\f^{k-1}(x)$. Then 
\begin{align*}
h_\Se(\f^k)\geq h_\Se(\f^k, y)&=\lim_{n\to\infty} \frac{c_{n}(\f^k,y)}{n}=\lim_{n\to \infty} \frac{v (y\cdot \f^k(y)\cdot\ldots \cdot\f^{(n-1)k}(y)) }{n}=\\
%&=\lim_{n} v(x+ \f(x)+ \ldots + \f^{k-1}(x)+ \f^{k}(x)+ \ldots + \f^{2k-1} (x)+ \ldots +  \f^{(n-1)k}+ \ldots + \f^{nk-1}(x))/n =\\
&= k \cdot \lim_{n\to \infty} \frac{c_{nk}(\f,x)}{nk}=k\cdot h_\Se(\f,x).
\end{align*} 
This yields $h_\Se(\f^k)\geq k\cdot h_\Se(\f,x)$ for all $x\in S$, and consequently, $h_\Se(\f^k)\geq k\cdot h_\Se(\f)$. 
  
Suppose $v$ to be s-monotone, then
\begin{equation*}\begin{split}
h_\Se(\f,x)=\lim_{n\to \infty} \frac{v(x\cdot\f (x)\cdot\ldots\cdot\f^{nk-1}(x))}{n\cdot k} \geq \\
 \lim_{n\to\infty} \frac{ v(x\cdot\f^k(x)\cdot\ldots\cdot(\f^k)^{n-1}(x))}{n\cdot k}= \frac{h_\Se(\f^k,x)}{k}
\end{split}\end{equation*}
Hence, $k\cdot h_\Se(\f)\geq h_\Se(\f^k,x)$ for every $x\in S$. Therefore, $k\cdot h_\Se(\f)\geq h_\Se(\f^k)$. 

If $\phi$ is an automorphism and $k\in\Z\setminus\{0\}$, apply the previous part of the theorem and Lemma \ref{inversion}.
\end{proof}    

The next lemma shows that monotonicity is available not only under taking factors: 

\begin{Lemma}[Monotonicity for subsemigroups] 
Let $(S,v)$ be a normed semigroup and $\phi:S\to S$ an endomorphism. If $T$ is a $\phi$-invariant normed subsemigroup of $(S,v)$, then $h_{\Se}(\phi)\geq h_{\Se}(\phi\restriction_{T})$.  
Equality holds if $S$ is ordered, $v$ is monotone and $T$ is cofinal in $S$. 
\end{Lemma}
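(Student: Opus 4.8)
The plan is to establish the two assertions separately: the inequality $h_{\Se}(\f)\geq h_{\Se}(\f\restriction_{T})$ is essentially formal, while the reverse inequality under the extra hypotheses is where the order structure enters.

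First I would prove $h_{\Se}(\f)\geq h_{\Se}(\f\restriction_{T})$. The point is that, since $T$ is a subsemigroup of $S$ with $\f(T)\subseteq T$, for every $x\in T$ all the factors $x,\f(x),\dots,\f^{n-1}(x)$ already lie in $T$, so the trajectory $T_n(\f\restriction_{T},x)$ formed inside $T$ is the very same element of $S$ as $T_n(\f,x)$; as the norm on $T$ is the restriction of $v$, this gives $c_n(\f\restriction_{T},x)=c_n(\f,x)$ for all $n\in\N_+$, hence $h_{\Se}(\f\restriction_{T},x)=h_{\Se}(\f,x)\leq h_{\Se}(\f)$. Taking the supremum over $x\in T$ yields the inequality.

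For the equality it remains to show $h_{\Se}(\f)\leq h_{\Se}(\f\restriction_{T})$, that is, $h_{\Se}(\f,x)\leq h_{\Se}(\f\restriction_{T})$ for every $x\in S$. Fix such an $x$ and, using that $T$ is cofinal in $S$, pick $t\in T$ with $x\leq t$. The crucial step is the estimate $T_n(\f,x)\leq T_n(\f,t)$ in $S$ for every $n\in\N_+$: since $\f$ preserves the order (automatic in the relevant cases, e.g. for the canonical order of a normed semilattice or for the algebraic preorder $\leq^a$ of a commutative normed monoid) we get $\f^i(x)\leq\f^i(t)$ for all $i$, and then, replacing in $x\cdot\f(x)\cdot\ldots\cdot\f^{n-1}(x)$ one factor at a time by the corresponding factor of $T_n(\f,t)$ and using the compatibility of $\leq$ with left and right translations together with transitivity, we arrive at $T_n(\f,x)\leq T_n(\f,t)$. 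Monotonicity of $v$ then gives $c_n(\f,x)\leq c_n(\f,t)$; dividing by $n$, letting $n\to\infty$ (the limits exist by Theorem \ref{limit}) and using the first part to identify $h_{\Se}(\f,t)=h_{\Se}(\f\restriction_{T},t)$, we obtain $h_{\Se}(\f,x)\leq h_{\Se}(\f\restriction_{T},t)\leq h_{\Se}(\f\restriction_{T})$. Taking the supremum over $x\in S$ completes the argument.

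The main obstacle is precisely the inequality $T_n(\f,x)\leq T_n(\f,t)$: beyond the cofinality of $T$ and the translation-compatibility of the order, it relies on $\f$ being order-preserving, a hypothesis that is harmless in the examples motivating the lemma but that should be made explicit (alternatively, one restricts to orders such as $\leq^a$ or the semilattice order, for which semigroup endomorphisms are automatically monotone). Everything else follows the familiar ``divide by $n$, take the limit, take the supremum'' template already used for Monotonicity for factors and the Logarithmic Law.
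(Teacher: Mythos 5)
Your argument is correct, and it is the natural one; the paper itself omits the proof of this lemma (deferring to \cite{DGV1}), so there is nothing to contrast it with beyond noting that the first half is exactly the formal identification $c_n(\f\restriction_T,x)=c_n(\f,x)$ for $x\in T$, and the second half is the expected cofinality argument. Your one substantive observation is well taken: to get $T_n(\f,x)\leq T_n(\f,t)$ from $x\leq t$ one needs $\f$ to preserve the order, which the statement leaves implicit; this is harmless in the intended applications (normed semilattices with their canonical order, or the algebraic preorder $\leq^a$ on a commutative normed monoid, where any semigroup homomorphism is automatically monotone), but for an arbitrary compatible order on $S$ it is an extra hypothesis, and you are right to flag it.
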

   
Note that $T$ is equipped with the induced norm $v\restriction_T$. The same applies to the subsemigroups $S_i$ in the next corollary: 

\begin{Corollary}[Continuity for direct limits] 
Let $(S,v)$ be a normed semigroup and $\phi:S\to S$ an endomorphism. If $\{S_i: i\in I\}$ is a directed family of 
$\phi$-invariant normed subsemigroup of $(S,v)$ with $ S =\varinjlim S_i$,  then $h_{\Se}(\phi)=\sup h_{\Se}(\phi\restriction_{S_i})$.
\end{Corollary}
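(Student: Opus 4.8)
The plan is to establish the two inequalities separately. First I would dispatch $h_{\Se}(\phi)\geq \sup_{i\in I} h_{\Se}(\phi\restriction_{S_i})$, which is essentially free: each $S_i$ is a $\phi$-invariant normed subsemigroup of $(S,v)$, so the Monotonicity for subsemigroups lemma established just above gives $h_{\Se}(\phi)\geq h_{\Se}(\phi\restriction_{S_i})$ for every $i\in I$, and taking the supremum over $i$ finishes this direction.

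For the reverse inequality $h_{\Se}(\phi)\leq \sup_{i\in I} h_{\Se}(\phi\restriction_{S_i})$, the first thing I would do is unwind the meaning of $S=\varinjlim S_i$. Since $\{S_i:i\in I\}$ is a directed family of normed subsemigroups of the ambient normed semigroup $(S,v)$ --- so the bonding morphisms of the system are the inclusions $S_i\hookrightarrow S_j$ and each $S_i$ carries the restricted norm $v\restriction_{S_i}$ --- the hypothesis $S=\varinjlim S_i$ amounts to the purely set-theoretic statement $S=\bigcup_{i\in I}S_i$ (with the norm of $S$ restricting to that of each $S_i$). In particular every element of $S$ already lies in some member of the family.

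Then I would invoke the locality of the construction of $h_{\Se}$. Fix $x\in S$ and choose $i\in I$ with $x\in S_i$. Since $S_i$ is $\phi$-invariant, $\phi^k(x)\in S_i$ for all $k$, so the trajectory $T_n(\phi,x)=x\cdot\phi(x)\cdot\ldots\cdot\phi^{n-1}(x)$ stays in $S_i$ and coincides with $T_n(\phi\restriction_{S_i},x)$ for every $n\in\N_+$; as $v\restriction_{S_i}=v$ on $S_i$, this gives $c_n(\phi,x)=c_n(\phi\restriction_{S_i},x)$ for all $n$. Passing to the limit in \eqref{hs-eq} yields $h_{\Se}(\phi,x)=h_{\Se}(\phi\restriction_{S_i},x)\leq h_{\Se}(\phi\restriction_{S_i})\leq\sup_{j\in I}h_{\Se}(\phi\restriction_{S_j})$, and taking the supremum over $x\in S$ gives the desired inequality, completing the proof.

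I do not expect a serious obstacle here: the only step that deserves attention is the identification $S=\bigcup_{i\in I}S_i$ (together with the compatibility of the norms) extracted from $S=\varinjlim S_i$, since once this is granted the argument is just the observation that both the trajectories $T_n(\phi,x)$ and the local entropy $h_{\Se}(\phi,x)$ are determined inside any $\phi$-invariant normed subsemigroup containing $x$.
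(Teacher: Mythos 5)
Your proof is correct and follows exactly the route the paper intends: the paper states this as a corollary of the Monotonicity for subsemigroups lemma (deferring details to \cite{DGV1}), and your two inequalities --- monotonicity for the lower bound, and the locality of trajectories inside a $\phi$-invariant $S_i$ containing a given $x$ (with $S=\bigcup_{i\in I}S_i$ and compatible norms) for the upper bound --- are precisely the intended argument.
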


We consider now products in $\Se$.
Let $\{(S_i,v_i):i\in I\}$ be a family of normed semigroups and let $S=\prod_{i \in I}S_i$ be their direct product in the category of semigroups. 

In case $I$ is finite, then $S$ becomes a normed semigroup with the $\max$-norm $v_{\prod}$, so $(S,v_{\prod})$ is the product of the family $\{S_i:i\in I\}$ in the category $\Se$; in such a case one has the following 

\begin{Theorem}[Weak Addition Theorem - products]\label{WAT}
Let $(S_i,v_i)$ be a normed semigroup and $\f_i:S_i\to S_i$ an endomorphism for $i=1,2$. Then the endomorphism $\f_1 \times \f_2$ of $ S _1 \times S_2$ has $h_\Se(\f_1 \times \f_2)= \max\{ h_\Se(\f_1),h_\Se(\f_2)\}$.
\end{Theorem}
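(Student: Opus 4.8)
The plan is to compute $h_\Se(\f_1\times\f_2)$ directly from Definition \ref{SEofEndos}, using the fact that the norm on $S_1\times S_2$ is the $\max$-norm $v_\prod((a,b))=\max\{v_1(a),v_2(b)\}$ and that $(\f_1\times\f_2)^j(a,b)=(\f_1^j(a),\f_2^j(b))$, so that trajectories are computed coordinatewise: $T_n(\f_1\times\f_2,(a,b))=(T_n(\f_1,a),T_n(\f_2,b))$. Hence $c_n(\f_1\times\f_2,(a,b))=\max\{c_n(\f_1,a),c_n(\f_2,b)\}$ for every $n$.

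The inequality $h_\Se(\f_1\times\f_2)\le\max\{h_\Se(\f_1),h_\Se(\f_2)\}$ follows because the two coordinate projections $\pi_i:S_1\times S_2\to S_i$ are surjective contractive homomorphisms intertwining $\f_1\times\f_2$ with $\f_i$, so Monotonicity for factors gives $h_\Se(\f_1\times\f_2)\ge h_\Se(\f_i)$ — wait, that is the wrong direction. Instead I would argue directly: for fixed $(a,b)$, from $c_n(\f_1\times\f_2,(a,b))=\max\{c_n(\f_1,a),c_n(\f_2,b)\}\le c_n(\f_1,a)+c_n(\f_2,b)$, dividing by $n$ and passing to the limit (all three limits exist by Theorem \ref{limit}) yields $h_\Se(\f_1\times\f_2,(a,b))\le h_\Se(\f_1,a)+h_\Se(\f_2,b)$; this only gives a sum bound, so the cleaner route is: $\frac{c_n(\f_1\times\f_2,(a,b))}{n}=\max\{\frac{c_n(\f_1,a)}{n},\frac{c_n(\f_2,b)}{n}\}$, and since the maximum of two convergent sequences converges to the maximum of the limits, $h_\Se(\f_1\times\f_2,(a,b))=\max\{h_\Se(\f_1,a),h_\Se(\f_2,b)\}\le\max\{h_\Se(\f_1),h_\Se(\f_2)\}$. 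Taking the supremum over $(a,b)$ gives the upper bound.

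For the reverse inequality $h_\Se(\f_1\times\f_2)\ge\max\{h_\Se(\f_1),h_\Se(\f_2)\}$, I would use the embeddings. Pick $i\in\{1,2\}$; for any $x\in S_i$ and any fixed $z$ in the other semigroup, the element $(x,z)$ (placed in the appropriate coordinate) has $h_\Se(\f_1\times\f_2,(x,z))=\max\{h_\Se(\f_i,x),h_\Se(\f_{3-i},z)\}\ge h_\Se(\f_i,x)$ by the identity just established. Taking the supremum over $x\in S_i$ gives $h_\Se(\f_1\times\f_2)\ge h_\Se(\f_i)$, and then over $i$ gives the bound. (Alternatively, each $\f_i$ is a factor of $\f_1\times\f_2$ via $\pi_i$, but one should check $\pi_i$ is contractive for the $\max$-norm, which is immediate; then Monotonicity for factors applies in the form $h_\Se(\f_i)\le h_\Se(\f_1\times\f_2)$ — note the projection goes $S_1\times S_2\twoheadrightarrow S_i$ and intertwines $\f_1\times\f_2$ with $\f_i$, so the lemma gives $h_\Se(\f_i)\le h_\Se(\f_1\times\f_2)$, which is exactly what is needed.)

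Combining the two inequalities yields the claimed equality. There is no serious obstacle here: the only point requiring a small amount of care is the elementary fact that for sequences $(s_n),(t_n)$ with $s_n\to s$ and $t_n\to t$ one has $\max\{s_n,t_n\}\to\max\{s,t\}$ (true also when the limits are $+\infty$), together with the verification that $v_\prod$ is indeed a semigroup norm and the projections are contractive — both of which are routine consequences of the definition of the $\max$-norm and are in any case part of the stated fact that $(S,v_\prod)$ is the categorical product in $\Se$.
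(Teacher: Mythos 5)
Your argument is correct, and it is the natural proof of this statement: the paper itself defers the proof to \cite{DGV1}, but the key identity $c_n(\f_1\times\f_2,(a,b))=\max\{c_n(\f_1,a),c_n(\f_2,b)\}$, combined with the fact that the limits exist (Theorem \ref{limit}) and that a supremum of coordinatewise maxima equals the maximum of the suprema, is exactly the intended route. Your self-correction on the direction of Monotonicity for factors lands on the right statement (the projection $\pi_i$ exhibits $\f_i$ as a factor of $\f_1\times\f_2$, giving $h_\Se(\f_i)\le h_\Se(\f_1\times\f_2)$), though as you note the local identity already yields both inequalities at once.
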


If $I$ is infinite, $S$ need not carry a semigroup norm $v$ such that every projection $p_i: (S,v) \to (S_i,v_i)$ is a morphism in $\Se$. This is why the product of the family $\{(S_i,v_i):i\in I\}$ in $\Se$ is actually the subset $$S_{\mathrm{bnd}}=\{x=(x_i)_{i\in I}\in S: \sup_{i\in I}v_i(x_i)\in\R\}$$ of $S$ with the norm $v_{\prod}$ defined by $$v_{\prod}(x)=\sup_{i\in I}v_i(x_i)\ \text{for any}\ x=(x_i)_{i\in I}\in S_{\mathrm{bnd}}.$$ For further details in this direction see \cite{DGV1}.

\subsection{Entropy in $\mathfrak M$}

We collect here some additional properties of the semigroup entropy in the category $\mathfrak M$ of normed monoids where also coproducts are available. If $(S_i,v_i)$ is a normed monoid for every $i\in I$, the direct sum 
$$S= \bigoplus_{i\in I} S_i =\{(x_i)\in \prod_{i\in I}S_i: |\{i\in I: x_i \ne 1\}|<\infty\}$$ becomes a normed monoid with the norm $$v_\oplus(x) = \sum_{i\in I} v_i(x_i)\ \text{for any}\ x = (x_i)_{i\in I} \in S.$$ 
This definition makes sense since $v_i$ are monoid norms, so 
$v_i(1) = 0$. Hence, $(S,v_\oplus)$ becomes a coproduct of the family $\{(S_i,v_i):i\in I\}$ in $\mathfrak  M$. 

We consider now the case when $I$ is finite, so assume without loss of generality that $I=\{1,2\}$. In other words we have two normed monoids $(S_1,v_1)$ and $(S_2,v_2)$. The product and the coproduct have the same underlying monoid $S=S_1\times S_2$, but the norms $v_\oplus$ and $v_{\prod}$ in $S$ are different and give different values of the semigroup entropy $h_\Se$; indeed, compare Theorem \ref{WAT} and the following one.

\begin{Theorem}[Weak Addition Theorem - coproducts] 
Let $(S_i,v_i)$ be a normed monoid and $\f_i:S_i\to S_i$ an endomorphism for $i=1,2$. Then the endomorphism $\f_1 \oplus \f_2$ of $S _1 \oplus S_2$ has $h_\Se(\f_1 \oplus \f_2)= h_\Se(\f_1) + h_\Se(\f_2)$. 
\end{Theorem}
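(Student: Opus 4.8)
The plan is to reduce the whole statement to the single structural observation that, $I=\{1,2\}$ being finite, the monoid underlying $S_1\oplus S_2$ is just $S_1\times S_2$ with the componentwise operation, and the norm $v_\oplus$ is the \emph{sum} of the two coordinate norms; consequently $\phi$-trajectories, and their $v_\oplus$-norms, split additively over the two coordinates. Concretely, I would fix $x=(x_1,x_2)\in S_1\oplus S_2$ (every such pair is a legitimate element, precisely because $I$ is finite), note that $(\f_1\oplus\f_2)^j(x)=(\f_1^j(x_1),\f_2^j(x_2))$ and that the product in $S_1\oplus S_2$ is taken in each coordinate separately, so that
$$T_n(\f_1\oplus\f_2,x)=\bigl(T_n(\f_1,x_1),\,T_n(\f_2,x_2)\bigr)\quad\text{for every }n\in\N_+ ,$$
and therefore, by definition of $v_\oplus$,
$$c_n(\f_1\oplus\f_2,x)=v_\oplus\bigl(T_n(\f_1\oplus\f_2,x)\bigr)=v_1\bigl(T_n(\f_1,x_1)\bigr)+v_2\bigl(T_n(\f_2,x_2)\bigr)=c_n(\f_1,x_1)+c_n(\f_2,x_2).$$

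Next I would divide by $n$ and let $n\to\infty$. Each of the three limits exists by Theorem~\ref{limit}, and all the quantities in sight are finite real numbers (since $c_n(\f_i,x_i)\le n\,v_i(x_i)$), so the limit may be taken termwise and yields the ``local'' additivity
$$h_\Se(\f_1\oplus\f_2,x)=h_\Se(\f_1,x_1)+h_\Se(\f_2,x_2).$$
Finally, taking the supremum over $x=(x_1,x_2)$, and using that $x_1$ and $x_2$ range independently over $S_1$ and $S_2$ so that $\sup_{x_1,x_2}\bigl(f(x_1)+g(x_2)\bigr)=\sup_{x_1}f(x_1)+\sup_{x_2}g(x_2)$ (an identity that stays valid in $\R_+$ when one of the two suprema is $\infty$, because both $S_1,S_2$ are nonempty), one obtains
$$h_\Se(\f_1\oplus\f_2)=\sup_{(x_1,x_2)}h_\Se(\f_1\oplus\f_2,x)=\sup_{x_1}h_\Se(\f_1,x_1)+\sup_{x_2}h_\Se(\f_2,x_2)=h_\Se(\f_1)+h_\Se(\f_2).$$

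There is essentially no hard step here: the only points deserving a word of care are the termwise passage to the limit (immediate once one knows each limit exists and is finite) and the interchange of the supremum with the sum, including the degenerate case in which one of $h_\Se(\f_1),h_\Se(\f_2)$ is infinite. It is worth emphasizing, for contrast with Theorem~\ref{WAT}, that it is exactly the additive (rather than $\max$) form of the coproduct norm $v_\oplus$ that turns the $\max$ of the product case into a sum here; a more ``structural'' attempt to get the lower bound via Monotonicity for subsemigroups applied to the $(\f_1\oplus\f_2)$-invariant coordinate copies $S_i\times\{1\}$ (on which the induced norm is $v_i$, since $v_j(1)=0$) only yields $h_\Se(\f_1\oplus\f_2)\ge\max\{h_\Se(\f_1),h_\Se(\f_2)\}$, so the direct trajectory computation above is the crux and cannot be bypassed.
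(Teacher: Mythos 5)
Your proof is correct: the identification of $T_n(\f_1\oplus\f_2,x)$ with the pair of coordinate trajectories, the additivity of $v_\oplus$, the termwise limit (legitimate since each $c_n(\f_i,x_i)/n\le v_i(x_i)$ converges by Theorem~\ref{limit}), and the interchange of supremum with the sum over independently ranging variables are all sound. The paper itself omits the proof of this theorem (deferring to \cite{DGV1}), and your argument is exactly the direct trajectory computation one expects there; your closing remark correctly identifies why the coproduct norm yields a sum where the product ($\max$) norm of Theorem~\ref{WAT} yields a maximum.
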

 
For a normed monoid $(M,v) \in \mathfrak  M$ let $B(M)= \bigoplus_\N M$, equipped with the above coproduct norm $v_\oplus(x) = \sum_{n\in\N} v(x_n)$ for any $x=(x_n)_{n\in\N}\in B(M)$. 
The \emph{right Bernoulli shift} is defined by $$\beta_M:B(M)\to B(M), \ \beta_M(x_0,\dots,x_n,\dots)=(1,x_0,\dots,x_n,\dots),$$
while the \emph{left Bernoulli shift} is $${}_M\beta:B(M)\to B(M),\ {}_M\beta(x_0,x_1,\dots,x_n,\dots)=(x_1,x_2, \dots,x_n,\dots).$$
%Then $h_\Se({}_M\beta) = 0$, since ${}_M\beta$ is locally nilpotent (${}_M\beta^n(x) = 0$ for each $x$, where $n = n_x$ may depend on $x$). 

\begin{Theorem}[Bernoulli normalization]  
Let $(M,v)$ be a normed monoid. Then:
\begin{itemize}
\item[(a)] $h_\Se (\beta_M)=\sup_{x\in M}v(x)$; 
\item[(b)] $h_\Se({}_M\beta) = 0$.
\end{itemize}
\end{Theorem}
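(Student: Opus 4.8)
The plan is to compute the two semigroup entropies directly from the definition, exploiting the explicit form of the trajectories of the shifts acting on the coproduct norm $v_\oplus$ of $B(M)=\bigoplus_{\N}M$. Write a generic element of $B(M)$ as a finitely supported sequence $x=(x_n)_{n\in\N}$, so that $v_\oplus(x)=\sum_{n\in\N}v(x_n)$, and recall that the $n$-th $\f$-trajectory is $T_n(\f,x)=x\cdot\f(x)\cdot\ldots\cdot\f^{n-1}(x)$, with $c_n(\f,x)=v_\oplus(T_n(\f,x))$ and $h_\Se(\f,x)=\lim_n c_n(\f,x)/n$.

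For part (b), first I would observe that the left shift ${}_M\beta$ is \emph{locally nilpotent} on the monoid $B(M)$: if $x$ is supported on $\{0,1,\dots,k\}$ then $({}_M\beta)^{k+1}(x)=1$, the identity of $B(M)$. Since $B(M)$ is a normed monoid and ${}_M\beta(1)=1$, Lemma \ref{locally}(b) gives $h_\Se({}_M\beta)=0$ immediately. (Alternatively, one computes directly that $T_n({}_M\beta,x)$ stabilizes for $n\geq k+1$, so $c_n({}_M\beta,x)$ is eventually constant and $h_\Se({}_M\beta,x)=0$ for every $x$.)

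For part (a), the key computation is that for the right shift $\beta_M$ the successive iterates $\beta_M^{j}(x)$ have pairwise disjoint supports once $x$ is fixed with finite support, so no cancellation or collision occurs when forming $T_n(\beta_M,x)$: if $x$ is supported in $\{0,\dots,k\}$ then $\beta_M^{j}(x)$ is supported in $\{j,\dots,j+k\}$, and the coordinates of the product $T_n(\beta_M,x)$ that lie in positions $\geq k$ are literally copies of the coordinates of $x$. Consequently $v_\oplus(T_n(\beta_M,x))$ grows like $n\cdot v_\oplus(x)$ up to a bounded error coming from the overlapping initial positions $\{0,\dots,k-1\}$; more precisely $c_n(\beta_M,x)\geq (n-k)\,v_\oplus(x)$ and $c_n(\beta_M,x)\le n\,v_\oplus(x)$, whence $h_\Se(\beta_M,x)=v_\oplus(x)$. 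To handle the first $k$ positions cleanly one may instead restrict to the generating elements: for $m\in M$ let $e_m=(m,1,1,\dots)\in B(M)$, so $v_\oplus(e_m)=v(m)$ and the supports of $e_m,\beta_M(e_m),\dots,\beta_M^{n-1}(e_m)$ are the singletons $\{0\},\{1\},\dots,\{n-1\}$, giving exactly $c_n(\beta_M,e_m)=n\,v(m)$ and hence $h_\Se(\beta_M,e_m)=v(m)$. Taking the supremum over $m\in M$ yields $h_\Se(\beta_M)\geq\sup_{x\in M}v(x)$.

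It remains to prove the reverse inequality $h_\Se(\beta_M)\leq\sup_{x\in M}v(x)$, and this is the step I expect to be the main obstacle, since a general $x\in B(M)$ has several nontrivial coordinates and one must show that the trajectory norm still grows at rate at most $\sup_m v(m)$, not at rate $v_\oplus(x)=\sum_n v(x_n)$. The idea is to use Continuity for direct limits together with the coproduct Weak Addition Theorem: $B(M)=\varinjlib_k B_k$ where $B_k=\bigoplus_{n=0}^{k}M$ is $\beta_M$-invariant? — it is not $\beta_M$-invariant, so instead I would argue as follows. Every $x\in B(M)$ with support in $\{0,\dots,k\}$ can be written as a product $x=e_{x_0}\cdot\beta_M\cdots$, i.e. decompose $x$ into its one-coordinate pieces; since each piece generates (under $\beta_M$) a copy of $B(M)$ on a disjoint set of coordinates, one bounds $T_n(\beta_M,x)$ coordinatewise: at each fixed position $p$ the relevant factor is a single element of $M$ of norm $\leq\sup_m v(m)$, and the number of positions appearing in $T_n(\beta_M,x)$ is $n+k$, but crucially at most $k+1$ of the positions receive contributions from two different iterates while the overwhelming majority receive exactly one coordinate of $x$. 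Summing, $c_n(\beta_M,x)\leq (n+k)\sup_{m}v(m)$, and dividing by $n$ and letting $n\to\infty$ gives $h_\Se(\beta_M,x)\leq\sup_m v(m)$. Taking the supremum over $x$ completes the proof. (If one prefers to avoid the bookkeeping, the cleanest route is: $h_\Se(\beta_M,x)\leq v_\oplus(x)$ always by Theorem \ref{limit}, but that is too weak; the honest argument really does require tracking that distinct iterates of $\beta_M$ occupy distinct blocks of coordinates, so that the per-step increment of $c_n$ is the norm of a \emph{single} $M$-coordinate rather than of the whole finitely supported tuple.)
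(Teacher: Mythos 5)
Your proof is correct and follows essentially the same route as the paper: the lower bound via the single\mbox{-}coordinate elements $(m,1,1,\dots)$, the upper bound via the observation that each coordinate of $T_n(\beta_M,x)$ is a \emph{single} element of $M$ (hence of norm at most $\sup_{m\in M}v(m)$) while the support of $T_n(\beta_M,x)$ has at most $n+k$ elements, and part (b) by local nilpotency of ${}_M\beta$ together with Lemma \ref{locally}(b). One side remark in your upper-bound argument is false, though not load-bearing: for $x$ supported on $\{0,\dots,k\}$ with $k\geq 1$, every middle position $i$ with $k\leq i\leq n-1$ receives contributions from exactly $k+1$ distinct iterates (the coordinate there being the fixed product $x_k\cdot x_{k-1}\cdot\ldots\cdot x_0$), not from just one; your estimate survives only because that product is still one element of $M$, which is exactly the point the paper exploits to get the slightly sharper bound $h_\Se(\beta_M,x)\leq v(x_k\cdot\ldots\cdot x_0)$ before passing to the supremum.
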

\begin{proof}     
(a) For $x\in M$ consider $\underline{x}=(x_n)_{n\in\N}\in B(M)$ such that $x_0=x$ and $x_n=1$ for every $n\in\N_+$.  
Then $v_\oplus(T_n(\beta_M,\underline{x}))=n\cdot v(x)$, so $h_\Se(\beta_M,\underline{x})=v(x)$.   Hence $h_\Se(\beta_M)\geq \sup_{x\in M}v(x)$.  
Let now $\underline{x}=(x_n)_{n\in\N}\in B(M)$ and let $k\in\N$ be the greatest index such that $x_k\neq 1$; then  
\begin{equation*}\begin{split}
v_\oplus(T_n(\beta_M,\underline{x}))= \sum_{i=0}^{k+n} v(T_n(\beta_M,\underline{x})_i)\leq\\
\sum_{i=0}^{k-1} v(x_0\cdot\ldots\cdot x_i) + (n-k)\cdot v(x_1\cdot\ldots\cdot x_k)+\sum_{i=1}^{k} v(x_i\cdot\ldots\cdot x_k).
\end{split}\end{equation*}
Since the first and the last summand do not depend on $n$, after dividing by $n$ and letting $n$ converge to infinity we obtain  
$$h_\Se(\beta_M,\underline{x})=\lim_{n\to \infty} \frac{v_\oplus(T_n(\beta_M,\underline{x}))}{n}\leq v(x_1\cdot\ldots\cdot x_k)\leq \sup_{x\in M}v(x).$$ 

(b) Note that ${}_M\beta$ is locally nilpotent and apply Lemma \ref{locally}.%that is ${}_M\beta^n(x) = 0$ for each $x\in S$, where $n = n_x$ may depend on $x$.
\end{proof}

\subsection{Semigroup entropy of an element and pseudonormed semigroups}\label{NewSec1}

One can notice a certain asymmetry in Definition \ref{SEofEndos}. 
 Indeed, for $S$ a normed semigroup, the local semigroup entropy defined in \eqref{hs-eq} is a two variable function 
$$h_{\Se}: \End(S) \times S \to  \R_+.$$ 
Taking $h_{\Se}(\f)=\sup_{x\in S}h_{\Se}(\f,x)$ for an endomorphism $\f\in\End(S)$, we obtained the notion of semigroup entropy of $\f$. 
But one can obviously exchange the roles of $\f$ and $x$ and obtain the possibility to discuss the entropy of an element $x\in S$. This can be 
done in two ways. Indeed, in Remark \ref{Asymm} we consider what seems the natural counterpart of $h_{\Se}(\f)$, while
here we discuss a particular case that could appear to be almost trivial, but actually this is not the case, as it permits to give a uniform approach to some entropies which are not defined by using trajectories. 
So, by taking $\phi=\id_S$ in \eqref{hs-eq}, we obtain a map $h_\Se^0:S\to\R_+$:

\begin{Definition}
Let $S$ be a normed semigroup and $x\in S$.
The \emph{semigroup entropy} of $x$ is 
$$
h_{\Se}^0(x):=h_{\Se}(\id_S,x) = \lim_{n\to\infty} \frac{v(x^n)}{n}.
$$
\end{Definition}

We shall see now that the notion of semigroup entropy of an element is supported by many examples. On the other hand, since some of the examples given below cannot be covered by our scheme, we propose first a slight extension that covers those examples as well. 

Let  ${\mathfrak S}^*$  be the category having as objects of all pairs $(S,v)$,  where $S$ is a semigroup and $v:S \to \R_+$  is an \emph{arbitrary} map. 
A morphism in the category ${\mathfrak S}^*$ is a semigroup homomorphism $\f: (S,v) \to (S',v')$  that is contracting with respect to the pair $v,v'$, i.e., $v'(\f(x)) \leq v(x)$ for every $x\in S$. 
Note that our starting category ${\mathfrak S}$ is simply a full subcategory of ${\mathfrak S}^*$, having as objects those pairs $(S,v)$ such that $v$ satisfies (i) from Definition \ref{Def1}. These pairs were called normed semigroups and $v$ was called a semigroup norm. For the sake of convenience and in order to keep close to the current 
terminology, let us call the function $v$ in the larger category ${\mathfrak S}^*$ a \emph{semigroup pseudonorm} (although, we are imposing no condition on $v$ whatsoever). 

So, in this setting, one can define a local semigroup entropy $h_{\Se^*}: \End(S) \times S \to  \R_+$ following the pattern of \eqref{hs-eq}, replacing the limit by 
$$h_{\Se^*} (\phi,x)=\limsup_{n\to \infty}\frac{v(T_n(\phi,x))}{n}.$$
In particular, 
$$h_{\Se^*}^0(x)=\limsup_{n\to \infty}\frac{v(x^n)}{n}.$$
Let us note that in order to have the last $\limsup$ a limit, one does not need $(S,v)$ to be in $\Se$, but it suffices to have the semigroup norm condition (i) from Definition \ref{Def1} fulfilled only for products of powers of the same element. 

\medskip
We consider here three different entropies, respectively from \cite{MMS}, \cite{FFK} and \cite{Silv}, that can be described in terms of $h_\Se^0$ or its generalized version $h_{\Se^*}^0$. We do not go into the details, but we give the idea how to capture them using the notion of semigroup entropy of an element of the semigroup of all endomorphisms of a given object equipped with an appropriate semigroup (pseudo)norm. 

\begin{itemize}
\item[(a)] Following \cite{MMS}, let $R$ be a Noetherian local ring and $\phi:R\to R$ an endomorphism of finite length; moreover, $\lambda(\phi)$ is the length of $\phi$, which is a real number $\geq 1$. In this setting the entropy of $\phi$ is defined by $$h_\lambda(\phi)=\lim_{n\to \infty}\frac{\log\lambda(\phi^n)}{n}$$ and it is proved that this limit exists.

Then the set $S=\End_{\mathrm{fl}}(R)$ of all finite-length endomorphisms of $R$ is a semigroup and $\log\lambda(-)$ is a semigroup norm on $S$. 
For every $\phi\in S$, we have 
$$
h_\lambda(\phi)=h_\Se(\id_S,\phi)=h_{\Se}^0(\f).
$$
In other words, $h_\lambda(\f)$ is nothing else but the semigroup entropy of the element $\f$ of the normed semigroup $S=\End_{\mathrm{fl}}(R)$. 
   
\item[(b)] We recall now the entropy considered in \cite{Silv}, which was already introduced in \cite{BV}. Let $t\in\N_+$ and $\varphi:\mathbb P^t\to\mathbb P^t$ be a dominant rational map of degree $d$. Then the entropy of $\varphi$ is defined as the logarithm of the dynamical degree, that is
$$
h_\delta (\varphi)=\log \delta_\phi=\limsup_{n\to \infty}\frac{\log\deg(\varphi^n)}{n}.
$$
Consider the semigroup $S$ of all dominant rational maps of $\mathbb P^n$ and the function $\log\deg(-)$. In general this is only a 
semigroup pseudonorm on $S$ and $$h_{\Se^*}^0(\varphi)=h_\delta(\varphi).$$ 
Note that $\log\deg(-)$ is a semigroup norm when $\varphi$ is an endomorphism of the variety $\mathbb P^t$.

\item[(c)] We consider now the growth rate for endomorphisms introduced in \cite{Bowen} and recently studied in \cite{FFK}. Let $G$ be a finitely generated group, $X$ a finite symmetric set of generators of $G$, and $\varphi:G\to G$ an endomorphism. For $g\in G$, denote by $\ell_X(g)$ the length of $g$ with respect to the alphabet $X$. The growth rate of $\varphi$ with respect to $x\in X$ is
$$\log GR(\varphi,x)=\lim_{n\to \infty}\frac{\log \ell_X(\varphi^n(x))}{n}$$
(and the growth rate of $\varphi$ is $\log GR(\varphi)=\sup_{x\in X} \log GR(\varphi,x)$).

Consider $S=\End(G)$ and, fixed $x\in X$, the map $\log GR(-,x)$. As in item (b) this is only a semigroup pseudonorm on $S$. Nevertheless, also in this case the semigroup entropy $$\log GR(\varphi,x)=h_{\Se^*}^0(\varphi).$$
\end{itemize}

\begin{Remark}\label{Asymm}
%Definition \ref{SEofEndos} suffers from obvious asymmetry. Indeed, f
For a normed semigroup $S$, let $h_{\Se}: \End(S) \times S \to  \R_+$ be the local semigroup entropy defined in \eqref{hs-eq}.   
 Exchanging the roles of $\f\in \End(S)$ and $x\in S$,  define the \emph{global semigroup entropy} of an element $x\in S$ by 
 $$
 h_{\Se}(x)=\sup_{\f \in \End(S)}h_{\Se}(\f,x).
 $$
Obviously, $h_{\Se}^0(x) \leq h_{\Se}(x)$ for every $x\in S$. 
\end{Remark}

\section{Obtaining known entropies}\label{known-sec}

\subsection{The general scheme}

Let $\mathfrak X$ be a category and let $F:\mathfrak X\to \Se$ be a functor. Define the entropy $$h_{F}:\mathfrak X\to \mathbb R_+$$ on the category $\mathfrak X$ by $$h_{F}(\phi)=h_{\Se}(F(\phi)),$$
for any endomorphism $\phi: X \to X$ in $\mathfrak X$. Recall that with some abuse of notation we write $h_{F}:\mathfrak X\to \mathbb R_+$ in place of $h_{F}:\mathrm{Flow}_\mathfrak X\to \mathbb R_+$ for simplicity.
   
\medskip
Since the functor $F$ preserves commutative squares and isomorphisms, the entropy $h_{F}$ has the following properties, that automatically follow from the previously listed properties of the semigroup entropy $h_\Se$. For the details and for properties that need a further discussion see \cite{DGV1}.

Let $X$, $Y$ be objects of $\mathfrak X$ and $\phi:X\to X$, $\psi:Y\to Y$ endomorphism in $\mathfrak X$.
\begin{itemize}
\item[(a)][Invariance under conjugation]
If $\alpha:X\to Y$ is an isomorphism in $\mathfrak X$, then $h_{F}(\phi)=h_{F}(\alpha\circ\phi\circ\alpha^{-1})$.
\item[(b)][Invariance under inversion]
If $\phi:X\to X$ is an automorphism in $\mathfrak X$, then $h_{F}(\f^{-1})=h_{F}(\f)$.
\item[(c)][Logaritmic Law] If the norm of $F(X)$ is $s$-monotone, then $h_{F}(\f^{k})=k\cdot h_{F}(\f)$ for all $k\in \N_+$. 
\end{itemize}
Other properties of $h_{F}$ depend on properties of the functor $F$.
\begin{itemize}
\item[(d)][Monotonicity for invariant subobjects] 
If $F$ sends subobject embeddings in $\mathfrak X$ to embeddings in $\Se$ or to surjective maps in $\Se$, then, if $Y$ is a $\f$-invariant subobject of $X$, we have $h_{F}(\f\restriction_Y)\leq h_{F}(\f)$.
\item[(e)][Monotonicity for factors] If $F$ sends factors in $\mathfrak X$ to surjective maps in $\Se$ or to embeddings in $\Se$, then, if $\alpha:T\to S$ is an epimorphism in $\mathfrak X$ such that $\alpha \circ \psi = \phi \circ \alpha$, then $h_F(\phi) \leq h_F(\psi)$.
\item[(f)][Continuity for direct limits] If $F$ is covariant and sends direct limits to direct limits, then $h_F(\phi)=\sup_{i\in I} h_F(\phi\restriction_{X_i})$ whenever $X=\varinjlim X_i$ and $X_i$ is a $\phi$-invariant subobject of $X$ for every $i\in I$.
\item[(g)][Continuity for inverse limits] If $F$ is contravariant and sends inverse limits to direct limits, then $h_F(\phi)=\sup_{i\in I} h_F(\overline\phi_i)$ whenever $X=\varprojlim X_i$ and $(X_i,\phi_i)$ is a factor of $(X,\phi)$ for every $i\in I$.
%\item[(g)][Weak Addition Theorem] If $F:\mathfrak X\to\mathfrak M$ and sends finite coproducts in $\mathfrak X$ to finite coproducts in $\mathfrak M$, then $h_F(\phi\oplus\psi)=h_F(\phi)+h_F(\psi)$.
\end{itemize}

In the following subsections we describe how the known entropies can be obtained from this general scheme. For all the details we refer to \cite{DGV1}

\subsection{Set-theoretic entropy}

In this section we consider the category $\mathbf{Set}$ of sets and maps and its (non-full) subcategory $\mathbf{Set}_\mathrm{fin}$ having as morphisms all the finitely many-to-one maps. We construct a functor $\mathfrak{atr}:\mathbf{Set}\to\Se$ and a functor $\mathfrak{str}: \mathbf{Set}_\mathrm{fin} \to \Se$, which give the set-theoretic entropy $\mathfrak h$ and the covariant set-theoretic entropy $\mathfrak h^*$, introduced in \cite{AZD} and \cite{DG-islam} respectively. We also recall that they are related to invariants for self-maps of sets introduced in \cite{G0} and \cite{AADGH} respectively.

\medskip
A natural semilattice with zero, arising from a set $X$, is the family $({\mathcal S}(X),\cup)$ of all finite subsets of $X$ with neutral element $\emptyset$. Moreover the map defined by $v(A) = |A|$ for every $A\in\mathcal S(X)$ is an s-monotone norm. So let $\mathfrak{atr}(X)=(\mathcal S(X),\cup,v)$.
Consider now a map $\lambda:X\to Y$ between sets and define $\mathfrak{atr}(\lambda):\mathcal S(X)\to \mathcal S(Y)$ by $A\mapsto \lambda(A)$ for every $A\in\mathcal S(X)$.
This defines a covariant functor $$\mathfrak{atr}: \mathbf{Set} \to \Se$$
such that
$$h_{\mathfrak{atr}}=\mathfrak h.$$

Consider now a finite-to-one map $\lambda:X\to Y$. As above let $\mathfrak{str}(X)=(\mathcal S(X),\cup,v)$, while $\mathfrak{str}(\lambda):\mathfrak{str}(Y)\to\mathfrak{str}(X)$ is given by $A \mapsto\lambda^{-1}(A)$ for every $A\in\mathcal S(Y)$. This defines a contravariant functor
$$
\mathfrak{str}: \mathbf{Set}_\mathrm{fin}\to\Se
$$
such that
$$
h_{\mathfrak{str}}=\mathfrak h^*.
$$
 
%The following invariants of flows in $\mathbf{Set}$ and $\mathbf{Set}_\mathrm{fin} $ were already considered by several authors recently (Akhavin, DD, Giordano Bruno, A. Shirazi, Hosseini, Virili). 
%Let $X$ be a set and  $f: X \to X$ be a self-map of $X$.
%\begin{itemize}
%\item[(a)] an  {antistring} of  $f$  in $X$  is  an infinite $f$-orbit $O_f(x)=\{f^n(x): n\in \N\}$ of a point $x\in X$.  
%\item[(b)] a  {string} of  $f$  in $X$  is an infinite sequence $(x_n)$ in $X$ such that $x_n = f(x_{n+1})$ for all
%$n \in \N$   (i.e., $x_0 \buildrel{\;f}\over \longleftarrow x_1 \buildrel{\;f}\over \longleftarrow x_2 \buildrel{\;f}\over \longleftarrow \ldots )$.  
%\item[(c)] the maximum number $s(f)$ (resp., $a(f)$) of pairwise disjoint (anti-)strings of $f$ is called  {string number} (resp.,  {anti-string number}) of $f$.
%\end{itemize}
%
%\begin{Example} 
%The compositions $h_\Se \circ \mathfrak{atr}: \mathbf{Set} \to \R_+$ and $h_\Se \circ \mathfrak{str}: \mathbf{Set} \to \R_+$ coincide with the 
%anti-string and the (anti-)string number, respectively   (i.e., $a(f) = h_\Se(\mathfrak{atr} \ f) $ for $f: X \to X$ and $s(f) = h_\Se(\mathfrak{str} \ f)$ $f\in \mathbf{Set}_\mathrm{fin}$). 
%\end{Example}

\subsection{Topological entropy for compact spaces}

In this subsection we consider in the general scheme the topological entropy $h_{top}$ introduced in \cite{AKM} for continuous self-maps of compact spaces. So we specify the general scheme for the category $\mathfrak X=\mathbf{CTop}$ of compact spaces and continuous maps, constructing the functor $\cov:\mathbf{CTop}\to\Se$.

\medskip
For a topological space $X$ let $\cov(X)$ be the family of all open covers $\mathcal U$ of $X$, where it is allowed $\emptyset\in\mathcal U$. For $\U, \V\in \cov(X)$ let $\U \vee \V=\{U\cap V: U\in \U, V\in \V\}\in \cov(X)$.
One can easily prove commutativity and associativity of $\vee$; moreover, let $\mathcal E=\{X\}$ denote the trivial cover. 
Then 
\begin{center}
$(\cov(X), \vee, \mathcal E)$ is a commutative monoid.
\end{center}

For a topological space $X$, one has a natural preorder $\U \prec\V $ on $\cov(X)$; indeed, $\V$ \emph{refines} $\U$  if for every $V \in \V$ there exists  $U\in \U$ such that $V\subseteq U$. Note that this preorder has bottom element $\mathcal E$, and that it is not an order. 
In general,  $\U \vee \U \ne \U $, yet  $\U \vee \U \sim \U $, and more generally  
\begin{equation}\label{vee} 
\U \vee \U  \vee \ldots \vee \U \sim \U. 
\end{equation}

For $X$, $Y$ topological spaces, a continuous map $\f:X\to Y$ and $\U\in \cov (Y)$, let $\f^{-1}(\U)=\{\f^{-1}(U): U\in \U\}$. Then, as $\f^{-1}(\U \vee \V)= \f^{-1}(\U)\vee \f^{-1}(\V)$, we have that 
$\cov (\f): \cov (Y)\to \cov (X)$, defined by $\U \mapsto \f^{-1}(\U)$, is a semigroup homomorphism.
This defines a contravariant functor $\mathfrak{cov}$ from the category of all topological spaces to the category of commutative semigroups. 
 
\medskip
To get a semigroup norm on $\cov(X)$ we restrict this functor to the subcategory $\CT$ of compact spaces. For a compact space $X$ and $\U\in \cov(X)$, let $$M(\U)=\min\{|\V|: \V\mbox{ a finite subcover of }\U\}\ \text{and}\ v(\U)=\log M(\U).$$ Now \eqref{vee} gives $v(\U \vee \U  \vee \ldots \vee \U) = v(\U)$, so   
\begin{center}
$(\mathfrak{cov}(X), \vee, v)$ is an arithmetic normed semigroup.
\end{center}

For every continuous map $\f:X\to Y$ of compact spaces and $\W\in \cov(Y)$, the inequality  $v(\f^{-1}(\W))\leq v(\W)$ holds.
Consequently
\begin{center}
$\cov(\f): \cov (Y)\to \cov (X)$, defined by $\W\mapsto\phi^{-1}(\W)$, is a morphism in $\Se$.
\end{center}
Therefore the assignments $X \mapsto \cov(X)$ and $\phi\mapsto\cov(\phi)$ define a contravariant functor $$\cov:\mathbf{CTop}\to \Se.$$
%The functor $\cov:\mathbf{CTop}\to \Se$ is contravariant and sends embeddings in $\mathbf{CTop}$ to surjective morphisms in $\Se$ and sends surjective maps in $\mathbf{CTop}$ to embeddings in $\Se$. 
Moreover,
$$h_{\mathfrak{cov}}=h_{top}.$$

Since the functor $\mathfrak{cov}$ takes factors in $\CT$ to embeddings in $\Se$, embeddings in $\CT$ to surjective morphisms in $\Se$, and inverse limits in $\CT$ to direct limits in $\Se$, we have automatically that the topological entropy $h_{top}$ is monotone for factors and restrictions to invariant subspaces, continuous for inverse limits, is invariant under conjugation and inversion, and satisfies the Logarithmic Law.

\subsection{Measure entropy}

In this subsection we consider the category $\MS$ of probability measure spaces $(X, \mathfrak B, \mu)$ and measure preserving maps, constructing a functor $\mathfrak{mes}:\MS\to\Se$ in order to obtain from the general scheme the measure entropy $h_{mes}$ from \cite{K} and \cite{Sinai}.

\medskip
For a measure space $(X,\mathfrak{B},\mu)$ let $\mathfrak{P}(X)$ be the family of all measurable partitions $\xi=\{A_1,A_2,\ldots,A_k\}$ of $X$. 
For $\xi, \eta\in \mathfrak{P}(X)$ let $\xi \vee \eta=\{U\cap V: U\in \xi, V\in \eta\}$. As $\xi \vee \xi = \xi$, with zero the cover $\xi_0=\{X\}$,
\begin{center}
$(\mathfrak{P}(X),\vee)$ is a semilattice with $0$.
\end{center}
Moreover, for $\xi=\{A_1,A_2,\ldots,A_k\}\in \mathfrak{P}(X)$  the \emph{entropy} of $\xi$ is given by Boltzmann's Formula  
$$
v(\xi)=-\sum_{i=1}^k \mu(A_k)\log \mu(A_k).
$$ 
This is a monotone semigroup norm making $\mathfrak{P}(X)$ a normed semilattice and a normed monoid.
 
Consider now a measure preserving map $T:X\to Y$. For a cover $\xi=\{A_i\}_{i=1}^k\in \mathfrak{P}(Y)$ let $T^{-1}(\xi)=\{T^{-1}(A_i)\}_{i=1}^k$. 
Since $T$ is measure preserving, one has $T^{-1}(\xi)\in \mathfrak{P}(X)$ and $\mu (T^{-1}(A_i)) = \mu(A_i)$ for all $i=1,\ldots,k$. Hence, $v(T^{-1}(\xi)) = v(\xi)$ and so
\begin{center}
$\mathfrak{mes}(T):\mathfrak{P}(Y)\to\mathfrak{P}(X)$, defined by $\xi\mapsto T^{-1}(\xi)$, is a morphism in $\SL$.
\end{center}
Therefore
the assignments $X \mapsto\mathfrak{P}(X)$ and $T\mapsto\mathfrak{mes}(T)$ define a contravariant functor $$\mathfrak{mes}:\MS\to\SL.$$
%For the contravariant functor $\mathfrak{mes}: \MS\to \SL$ the entropy $h_{\mathfrak{mes}}= h_\Se \circ \mathfrak{mes}:\MS\to \R_+$ coincides with measure-theoretic entropy $h_{mes}$.
Moreover,
$$h_{\mathfrak{mes}}=h_{mes}.$$

The functor $\mathfrak{mes}:\MS\to\SL$ is covariant and sends embeddings in $\MS$ to surjective morphisms in $\SL$ and sends surjective maps in $\MS$ to embeddings in $\SL$. Hence, similarly to $h_{top}$, also the measure-theoretic entropy $h_{mes}$  is monotone for factors and restrictions to invariant subspaces, continuous for inverse limits, is invariant under conjugation and inversion, satisfies the Logarithmic Law and the Weak Addition Theorem.

\medskip
In the next remark we briefly discuss the connection between measure entropy and topological entropy.

\begin{Remark}
\begin{itemize}
\item[(a)] If $X$ is a compact metric space and $\phi: X \to X$ is a continuous surjective self-map, 
%then the set $M(X,\phi)$ of all $\phi$-invariant Borel probability measures $\mu$ on $X$ (i.e., making $\phi:(X,\mu) \to (X,\mu)$ measure preserving) is non-empty; this fact is known as
by Krylov-Bogolioubov Theorem \cite{BK} there exist some $\phi$-invariant Borel probability measures $\mu$ on $X$ (i.e., making $\phi:(X,\mu) \to (X,\mu)$ measure preserving). Denote by $h_\mu$ the measure entropy with respect to the measure $\mu$.
The inequality $h_{\mu}(\phi)\leq h_{top}(\phi)$ for every $\mu \in M(X,\phi)$ is due to Goodwyn \cite{Goo}. Moreover the \emph{variational principle} (see \cite[Theorem 8.6]{Wa}) holds true:
$$h_{top}(\phi)=  \sup \{h_{\mu}(\phi):  \mu\ \text{$\phi$-invariant measure on $X$}\}.$$
%The surjectivity of $\phi$ is important, since the continuous image $\phi(X)$ of the compact space $X$ is compact, so closed in $X$. If $\phi(X) \ne X$, then for the open non-empty set $U=X \setminus \phi(X)$ of $X$ and $\mu \in M(X,\phi)$ one would obtain $\mu(U) = 0$, as $1 = \mu(X) = \mu(\phi(X))$; this is a contradiction, since $\mu(U)>0$ by the regularity of $\mu$.
\item[(b)] In the computation of the topological entropy it is possible to reduce to surjective continuous self-maps of compact spaces. Indeed, for a compact space $X$ and a continuous self-map $\phi:X\to X$, the set $E_\phi(X)=\bigcap_{n\in\N}\phi^n(X)$ is closed and $\phi$-invariant, the map $\phi\restriction_{E_\phi(X)}:E_\phi(X)\to E_\phi(X)$ is surjective and $h_{top}(\phi)=h_{top}(\phi\restriction_{E_\phi(X)})$ (see \cite{Wa}). 
\item[(c)] In the case of a compact group $K$ and a continuous surjective endomorphism $\phi:K\to K$, the group $K$ has its unique Haar measure and so $\phi$ is measure preserving as noted by Halmos \cite{Halmos}.
In particular both $h_{top}$ and $h_{mes}$ are available for surjective continuous endomorphisms of compact groups and they coincide as proved in the general case by Stoyanov \cite{S}.

 In other terms, denote by $\mathbf{CGrp}$ the category of all compact groups and continuous homomorphisms, and by $\textbf{CGrp}_e$ the non-full subcategory of $\textbf{CGrp}$, having as morphisms all epimorphisms in $\textbf{CGrp}$. So in the following diagram we consider the forgetful functor $V: \textbf{CGrp}_e\to \textbf{Mes}$, while $i$ is the inclusion of $\textbf{CGrp}_e$ in $\textbf{CGrp}$ as a non-full subcategory and $U:\mathbf{CGrp}\to \mathbf{Top}$ is the forgetful functor:
\begin{equation*}
\xymatrix{
\textbf{CGrp}_e\ar[r]^{i}\ar[d]^V & \textbf{CGrp}\ar[r]^{U}&  \textbf{Top} \\
\textbf{Mes}
}
\end{equation*}
For a surjective endomorphism $\phi$ of the compact group $K$, we have then $h_{mes}(V(\phi))=h_{top}(U(\phi))$.
%For other results in this direction see the paper of Berg \cite{Berg}, showing that Haar measure maximizes the measure theoretic entropy of a continuous automorphism of a compact metrizable group, and under conditions of finiteness and ergodicity it does so uniquely.
%
%Let $X$ be a compact topological group, let $\mu$ be its Haar measure and let $\phi: G \to G$ be a continuous endomorphism. 
%\begin{itemize} 
%\item[(a)] [Halmos] $\phi$ is measure preserving if and only if $\phi$ is surjective. 
%\item[(b)]  [Aoki] if $\phi$ is surjective, then  {$h_{m}(\phi) = h_{top}(\phi)$}.
%\item[(c)]  [Variational principle] if $X$ is a compact  {space} and $f:X\to X$ a continuous map, then $h_{top}(\phi)= \sup \{h_{\mu}(\phi): \mu\mbox{ is an $f$-invariant measure on }X\}$.
\end{itemize}
\end{Remark}

\subsection{Algebraic entropy}

Here we consider the category $\mathbf{Grp}$ of all groups and their homomorphisms and its subcategory $\mathbf{AbGrp}$ of all abelian groups. We construct two functors $\mathfrak{sub}:\mathbf{AbGrp}\to\SL$ and $\mathfrak{pet}:\mathbf{Grp}\to\Se$ that permits to find from the general scheme the two algebraic entropies $\ent$ and $h_{alg}$. For more details on these entropies see the next section.

\medskip
Let $G$ be an abelian group and let $(\sF(G),\cdot)$ be the semilattice consisting of all finite subgroups of $G$. Letting $v(F) = \log|F|$ for every $F \in \sF(G)$, then
\begin{center}
$(\sF(G),\cdot,v)$ is a normed semilattice
\end{center}
and the norm $v$ is monotone.

For every group homomorphism $\f: G \to H$, 
\begin{center}
the map $\sF(\f): \sF(G) \to \sF(H)$, defined by $F\mapsto \f(F)$, is a morphism in $\SL$.
\end{center}
Therefore
the assignments $G\mapsto \sF(G)$ and $\phi\mapsto \sF(\phi)$ define a covariant functor $$\mathfrak{sub}: \AG \to \SL.$$
Moreover $$h_{\mathfrak{sub}}=\ent.$$

Since the functor $\mathfrak{sub}$ takes factors in $\mathbf{AbGrp}$ to surjective morphisms in $\Se$, embeddings in $\mathbf{AbGrp}$ to embeddings in $\Se$, and direct limits in $\mathbf{AbGrp}$ to direct limits in $\Se$, we have automatically that the algebraic entropy $\ent$ is monotone for factors and restrictions to invariant subspaces, continuous for direct limits, invariant under conjugation and inversion, satisfies the Logarithmic Law.
 
\medskip
For a group $G$ let $\sM(G)$ be the family of all finite non-empty subsets of $G$. 
Then $\sM(G)$ with the operation induced by the multiplication of $G$ is a monoid with neutral element $\{1\}$. 
Moreover, letting $v(F) = \log |F|$ for every $F \in \sM(G)$ makes $\sM(G)$ a normed semigroup.
For an abelian group $G$ the monoid $\sM(G)$ is arithmetic since for any $F\in \sM(G)$ the sum of $n$ summands satisfies $|F + \ldots + F|\leq (n+1)^{|F|}$. 
Moreover, $(\sM(G),\subseteq)$ is an ordered semigroup and the norm $v$ is $s$-monotone. 

For every group homomorphism $\f:G \to H$, 
\begin{center}
the map $\sM(\f): \sM(G) \to \sM(H)$, defined by $F\mapsto \f(F)$, is a morphism in $\Se$.
\end{center}
Consequently the assignments $G \mapsto (\sM(G),v)$ and $\phi\mapsto \sM(\phi)$ give a covariant functor $$\mathfrak{pet}:\mathbf{Grp}\to \Se.$$
Hence $$h_{\mathfrak{pet}}=h_{alg}.$$
Note that the functor $\mathfrak{sub}$ is a subfunctor of $\mathfrak{pet}:  \AG  \to\Se$ as $ \sF(G) \subseteq  \sH(G)$ for every abelian group $G$. 
%\footnote{Osservazione generale sul funtore $\mathfrak{pet}$: \\
%(a) ovviamente rende onore a Peters, ma anche suona un po' buffo in inglese. Tuttavia, possiamo tenercelo, se ti va bene :-)
%
%(b) il funtore $\mathfrak{pet}$ preserva la locale nilpotenza e la locale quasi-periodicita' degli endomorfismi, ma non vedo a cosa ci serve
%ora. Possiamo forse passarlo nell'archivio. 
%}

\medskip
As for the algebraic entropy $\ent$, since the functor $\mathfrak{pet}$ takes factors in $\mathbf{Grp}$ to surjective morphisms in $\Se$, embeddings in $\mathbf{Grp}$ to embeddings in $\Se$, and direct limits in $\mathbf{Grp}$ to direct limits in $\Se$, we have automatically that the algebraic entropy $h_{alg}$ is monotone for factors and restrictions to invariant subspaces, continuous for direct limits, invariant under conjugation and inversion, satisfies the Logarithmic Law.

\subsection{$h_{top}$ and $h_{alg}$ in locally compact groups}\label{NewSec2}

As mentioned above, Bowen introduced topological entropy for uniformly continuous self-maps of metric spaces in \cite{B}. 
His approach turned out to be especially efficient in the case of locally compact spaces provided with some Borel measure with good invariance
properties, in particular for {continuous endomorphisms of locally compact groups provided with their Haar measure}. 
Later Hood in \cite{hood} extended Bowen's definition to uniformly continuous self-maps of arbitrary uniform spaces and in particular to 
continuous endomorphisms of (not necessarily metrizable)  locally compact groups.

On the other hand, Virili \cite{V} extended the notion of algebraic entropy to continuous endomorphisms of locally compact abelian groups, inspired by Bowen's definition of topological entropy (based on the use of Haar measure). As mentioned in \cite{DG-islam}, his definition can be extended to continuous endomorphisms of arbitrary locally compact groups. 

\medskip
Our aim here is to show that both entropies can be obtained from our general scheme in the case of measure preserving topological automorphisms of locally compact groups. 
To this end we recall first the definitions of  $h_{top}$ and $h_{alg}$ in locally compact groups.  Let $G$ be a locally compact group, let $\mathcal C(G)$ be the family of all compact neighborhoods of $1$ and $\mu$ be a right Haar measure on $G$. 
For a continuous endomorphism $\phi: G \to G$, $U\in\mathcal C(G)$ and a positive integer $n$, the $n$-th cotrajectory $C_n(\phi,U)=U\cap \phi^{-1}(U)\cap\ldots\cap\phi^{-n+1}(U)$ is still in $\mathcal C(G)$. The topological entropy $h_{top}$ is intended to measure the rate of decay of the $n$-th cotrajectory $C_n(\phi,U)$.
So let
\begin{equation}
H_{top}(\phi,U)=\limsup _{n\to \infty} - \frac{\log \mu (C_n(\phi,U))}{n},
\end{equation}
which does not depend on the choice of the Haar measure $\mu$.
The \emph{topological entropy} of $\phi$ is 
$$
h_{top}(\phi)=\sup\{H_{top}(\phi,U):U\in\mathcal C(G)\}.
$$
If $G$ is discrete, then $\mathcal C(G)$ is the family of all finite subsets of $G$ containing $1$, and $\mu(A) = |A|$ for subsets $A$ of $G$. So $H_{top}(\phi,U)= 0$ for every $U \in \mathcal C(G)$, hence $h_{top}(\phi)=0$. 

To define the algebraic entropy of $\phi$ with respect to $U\in\mathcal C(G)$ one uses the {$n$-th $\phi$-trajectory} $T_n(\phi,U)=U\cdot \phi(U)\cdot \ldots\cdot \phi^{n-1}(U)$ of $U$, that still belongs to $\mathcal C(G)$. 
It turns out that the value 
\begin{equation}\label{**}
H_{alg}(\phi,U)=\limsup_{n\to \infty} \frac{\log \mu (T_n(\phi,U))}{n}
\end{equation}
does not depend on the choice of $\mu$.  The \emph{algebraic entropy} of $\phi$ is 
$$
h_{alg}(\phi)=\sup\{H_{alg}(\phi,U):U\in\mathcal C(G)\}.
$$
The term ``algebraic'' is motivated by the fact that the definition of $T_n(\phi,U)$ (unlike $C_n(\phi,U)$) makes use of the group operation.

As we saw above \eqref{**} is a limit when $G$ is discrete. Moreover, if $G$ is compact, then $h_{alg}(\phi)=H_{alg}(\phi,G)=0$.

\medskip
In the sequel, $G$ will be a locally compact group. We fix also a measure preserving topological automorphism $\phi: G \to G$.  

To obtain the entropy $h_{top}(\phi)$ via semigroup entropy fix some $V\in \mathcal C(G)$ with $\mu(V)\leq 1$. Then consider the subset 
$$
\mathcal C_0(G)=\{U\in \mathcal C(G): U \subseteq V\}.
$$ 
Obviously, $\mathcal C_0(G)$ is a monoid with respect to intersection, having as neutral element $V$. To obtain a pseudonorm $v$ on $\mathcal C_0(G)$ let $v(U) = - \log \mu (U)$ for any $U \in \mathcal C_0(G)$. 
Then $\phi$ defines a semigroup isomorphism $\phi^\#: \mathcal C_0(G)\to \mathcal C_0(G)$ by $\phi^\#(U) = \phi^{-1}(U)$ for any 
$U\in \mathcal C_0(G)$. It is easy to see that $\phi^\#: \mathcal C_0(G)\to \mathcal C_0(G)$ is a
an automorphism in $\Se^*$ and  the semigroup entropy $h_{\Se^*}(\phi^\#)$ coincides with  $h_{top}(\phi)$  since 
$H_{top}(\phi,U) \leq H_{top}(\phi,U')$ whenever $U \supseteq U'$. 

To obtain the entropy $h_{alg}(\phi)$ via semigroup entropy fix some $W\in \mathcal C(G)$ with 
$\mu(W)\geq 1$. Then consider the subset 
$$
\mathcal C_1(G)=\{U\in \mathcal C(G): U \supseteq W\}
$$ 
of the set $\mathcal C(G)$. Note that for $U_1, U_2 \in \mathcal C_1(G)$ also $U_1U_2 \in \mathcal C_1(G)$. 
Thus $\mathcal C_1(G)$ is a semigroup. To define a pseudonorm $v$ on $\mathcal C_1(G)$ let 
$v(U) = \log \mu (U)$ for any $U \in \mathcal C_1(G)$. Then $\phi$ defines a semigroup isomorphism $\phi_\#: \mathcal C_1(G)\to \mathcal C_1(G)$ by $\phi_\#(U) = \phi(U)$ for any $U\in \mathcal C_1(G)$. It is easy to see that $\phi_\#: \mathcal C_1(G)\to \mathcal C_1(G)$ is a morphism in $\Se^*$ and 
  the semigroup entropy $h_{\Se^*}(\phi_\#)$ coincides with  $h_{alg}(\phi)$, 
  since $ \mathcal C_1(G)$ is cofinal in $ \mathcal C(G)$ and $H_{alg}(\phi,U) \leq H_{alg}(\phi,U')$ whenever $U \subseteq U'$. 

\begin{Remark} We asked above the automorphism $\phi$ to be ``measure preserving". In this way one rules out many interesting 
cases of topological automorphisms that are not measure preserving (e.g., all automorphisms of $\R$ beyond $\pm \id_\R$). This condition is imposed in order to respect the definition of the morphisms in $\Se^*$. If one further relaxes
this condition on the morphisms in $\Se^*$ 
(without asking them to be contracting maps with respect to the pseudonorm),
 then one can obtain a semigroup entropy that covers the topological and the algebraic entropy of 
arbitrary topological automorphisms of locally compact groups (see \cite{DGV}  for more details). 
 \end{Remark}

\subsection{Algebraic $i$-entropy}

For a ring $R$ we denote by $\mod_R$ the category of right $R$-modules and $R$-module homomorphisms. We consider here the algebraic $i$-entropy introduced in \cite{SZ}, giving a functor ${\mathfrak{sub}_i}:\mod_R\to \SL$, to find $\ent_i$ from the general scheme. Here $i: \mod_R \to \R_+$ is an invariant of $\mod_R$ (i.e., $i(0)=0$ and $i(M) = i(N)$ whenever $M\cong N$).
Consider the following conditions: 
\begin{itemize}
\item[(a)] $i(N_1 + N_2)\leq i(N_1) + i(N_2)$ for all submodules $N_1$, $N_2$ of $M$;
\item[(b)] $i(M/N)\leq i(M)$ for every submodule $N$ of $M$;
\item[(b$^*$)] $i(N)\leq i(M)$ for every submodule $N$ of $M$. 
\end{itemize}
The invariant $i$ is called \emph{subadditive} if (a) and (b) hold, and it is called \emph{preadditive} if (a) and (b$^*$) hold.

\medskip
For $M\in\mod_R$ denote by $\La(M)$ the lattice of all submodules of $M$. The operations are intersection and sum of two submodules, the bottom element is $\{0\}$ and the top element is $M$. Now fix a subadditive invariant $i$ of $\mod_R$ and for a right $R$-module $M$ let
$$\sF_i(M)=\{\mbox{submodules $N$ of $M$ with }i(M)< \infty\},$$
which is a subsemilattice of $\La(M)$ ordered by inclusion. 
Define a norm on $\sF_i(M)$ setting $$v(H)=i(H)$$ for every $H \in \sF_i(M)$. The norm $v$ is not necessarily monotone (it is monotone if $i$ is both subadditive and preadditive).

For every homomorphism $\f: M \to N$ in $\mod_R$, 
\begin{center}
$\sF_i(\f): \sF_i(M) \to \sF_i(N)$, defined by $\sF_i(\f)(H) =\f(H)$, is a morphism in $\SL$.  
\end{center}
Moreover the norm $v$ makes the morphism $\sF_i(\f)$ contractive by the property (b) of the invariant. 
Therefore,
the assignments $M \mapsto \sF_i(M)$ and $\phi\mapsto \sF_i(\phi)$ define a covariant functor $$\mathfrak{sub}_i:\mod _R\to \SL.$$
We can conclude that, for a ring $R$ and a subadditive invariant $i$ of $\mod_R$,
% the functor ${\mathfrak{sub}_i}:\mod_R\to \SL$,  the entropy $h_{{\mathfrak{sub}_i}}$ (associated to ${\mathfrak{sub}_i}$, hence to $i$ as well) coincides with the algebraic $i$-entropy $\ent_i$.
$$h_{\mathfrak{sub}_i}=\ent_i.$$
 
If $i$ is preadditive, the functor ${\mathfrak{sub}_i}$ sends monomorphisms to embeddings and so $\ent_i$ is monotone under taking submodules.
If $i$ is both subadditive and preadditive then for every $R$-module $M$ the norm of ${\mathfrak{sub}_i}(M)$ is s-monotone, so $\ent_{i}$ satisfies also the Logarithmic Law.
In general this entropy is not monotone under taking quotients, but this can be obtained with stronger hypotheses on $i$ and with some restriction on the domain of ${\mathfrak{sub}_i}$.

\medskip
A clear example is given by vector spaces; the algebraic entropy $\ent_{\dim}$ for linear transformations of vector spaces was considered in full details in \cite{GBS}:

\begin{Example}
Let $K$ be a field. Then for every $K$-vector space $V$ let $\sF_d(M)$ be the set of all finite-dimensional subspaces $N$ of $M$. 

Then $(\sF_d(V),+)$ is a subsemilattice of $(\La(V),+)$ and $v(H)=\dim H$ defines a monotone norm on $\sF_d(V)$. For every morphism $\f: V \to W$ in $\mod_K$  
\begin{center}
the map $\sF_d(\f): \sF_d(V) \to \sF_d(W)$, defined by $H\mapsto\f(H)$, is a morphism in $\SL$.
\end{center}
 %This norm makes the morphisms $\sF_i(\f)$ contracting by the property (b) of the invariant. 

Therefore, the assignments $M \mapsto \sF_d(M)$ and $\phi\mapsto \sF_d(\phi)$ define a covariant functor $$\mathfrak{sub}_d:\mod_K\to \SL.$$
Then $$h_{\mathfrak{sub}_d}=\ent_{\dim}.$$ 
Note that this entropy can be computed ad follows. Every flow $\phi: V \to V$ of $\mod_K$ can be considered as a $K[X]$-module $V_\phi$ letting $X$ act on $V$ as $\phi$. Then $h_{\mathfrak{sub}_d}(\phi)$ coincides with the rank of the $K[X]$-module $V_\phi$. 
\end{Example}

\subsection{Adjoint algebraic entropy}

We consider now again the category $\mathbf{Grp}$ of all groups and their homomorphisms, giving a functor $\mathfrak{sub}^\star:\mathbf{Grp}\to \SL$ such that the entropy defined using this functor coincides with the adjoint algebraic entropy $\ent^\star$ introduced in \cite{DGS}.

\medskip
For a group $G$ denote by $\sC(G)$ the family of all subgroups of finite index in $G$. It is a subsemilattice of $(\La(G), \cap)$. For $N\in\sC(G)$, let $$v(N) = \log[G:N];$$ then
\begin{center}
$(\sC(G),v)$ is a normed semilattice,
\end{center}
with neutral element $G$; moreover the norm $v$ is monotone.

For every group homomorphism $\f: G \to H$ 
\begin{center}
the map $\sC(\f): \sC(H) \to \sC(G)$, defined by $N\mapsto \f^{-1}(N)$, is a morphism in $\Se$. 
\end{center}
Then
the assignments $G\mapsto\sC(G)$ and $\phi\mapsto\sC(\phi)$ define a contravariant functor $$\mathfrak{sub}^\star:\mathbf{Grp}\to \SL.$$
Moreover
$$h_{\mathfrak{sub}^\star}=\ent^\star.$$

There exists also a version of the adjoint algebraic entropy for modules, namely the adjoint algebraic $i$-entropy $\ent_i^\star$ (see \cite{Vi}), which can be treated analogously.

\subsection{Topological entropy for totally disconnected compact groups}
  
Let $(G,\tau)$ be a totally disconnected compact group and consider the filter base $\V_G(1)$ of open subgroups of $G$.
Then
\begin{center}
$(\V_G(1), \cap)$ is a normed semilattice
\end{center}
with neutral element $G \in \V_G(1)$ and norm defined by $v_o(V)=\log [G:V]$ for every $V\in\V_G(1)$.

For a continuous homomorphism $\f: G\to H$ between compact groups, 
\begin{center}
the map $\V_H(1)\to \V_G(1)$, defined by $V \mapsto \f^{-1}(V)$, is a morphism in $\SL$.
\end{center}
This defines a contravariant functor $$\mathfrak{sub}_o^\star:\mathbf{TdCGrp}\to\SL,$$
which is a subfunctor of $\mathfrak{sub}^\star$.

Then the entropy $h_{\mathfrak{sub}^\star_o}$ coincides with the restriction to $\mathbf{TdCGrp}$ of the topological entropy $h_{top}$.
  
\medskip
This functor is related also to the functor $\mathfrak{cov}:\mathbf{TdCGrp} \to\Se$. Indeed, let $G$ be a totally disconnected compact group. Each $V\in \V_G(1)$ defines a cover $\U_V=\{x\cdot V\}_{x\in G}$ of $G$ with $v_o(V)=v(\U_V)$. So the map $V \mapsto \U_V$ defines an isomorphism between the normed semilattice $\mathfrak{sub}_o^\star(G)=\V_G(1)$ and the subsemigroup $\mathfrak{cov}_s(G)=\{\U_V:V \in \V_G(1)\}$ of $\mathfrak{cov}(G)$.

\subsection{Bridge Theorem}\label{BTsec}

In Definition \ref{BTdef} we have formalized the concept of Bridge Theorem between entropies $h_1:\mathfrak X_1 \to \R_+$ and $h_2:\mathfrak X_2 \to \R_+$ via functors $\varepsilon: \mathfrak X_1 \to \mathfrak X_2$.
Obviously, the Bridge Theorem with respect to the functor $\varepsilon$ is available when each $h_i$ has the form $h_i= h_{F_i}$ for appropriate functors $F_i:  \mathfrak{X}_i \to \Se$ ($i= 1,2$) that commute with $\varepsilon$ (i.e., $F_1 = F_2 \varepsilon$), that is 
$$
h_2(\varepsilon(\phi))= h_1(\phi)\  \mbox{  for all morphisms $\phi$ in }\  \mathfrak X_1.
$$  
Actually, it is sufficient that $F_i$ commute with $\varepsilon$ ``modulo $h_\Se$" (i.e., $h_\Se F_1 = h_\Se F_2 \varepsilon$) to obtain this conclusion: 
\begin{equation}\label{Buzz}
\xymatrix@R=6pt@C=37pt
{
\mathfrak{X}_1\ar[dd]_{\varepsilon}\ar[dr]^{F_1}\ar@/^2pc/[rrd]^{h_{1}}	&	&	\\
				& {\Se}\ar[r]|-{ {h_\Se}}&\R^+			\\
\mathfrak{X}_2\ar[ur]_{F_2}\ar@/_2pc/[rru]_{h_{2}}						&	&
}
\end{equation}

In particular the Pontryagin duality functor {$\ \widehat{}: \AG \to \CAG$} connects the category of abelian groups and that of compact abelian groups so connects the respective entropies $h_{alg}$ and $h_{top}$ by a Bridge Theorem. 
Taking the restriction to torsion abelian groups and  the totally disconnected compact groups one obtains: 

\begin{Theorem}[Weiss Bridge Theorem]\emph{\cite{W}}\label{WBT}
Let  $K$ be a totally disconnected compact abelian group and $\f: K\to K$ a continuous endomorphism. Then $h_{top}(\f) = \ent(\widehat \f)$. 
\end{Theorem}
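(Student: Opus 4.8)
The plan is to exploit the duality between the covariant functor $\mathfrak{sub}$ on $\mathbf{AbGrp}$ (which yields $\ent$) and the contravariant functor $\mathfrak{sub}^\star_o$ on $\mathbf{TdCGrp}$ (which yields the restriction of $h_{top}$), by showing that Pontryagin duality intertwines them ``modulo $h_\Se$'' in the sense of diagram \eqref{Buzz}. More precisely, given a totally disconnected compact abelian group $K$, its Pontryagin dual $\widehat K$ is a torsion abelian group, and the key is to produce an isomorphism of normed semilattices between $\mathfrak{sub}(\widehat K)=(\sF(\widehat K),\cdot,v)$, the semilattice of finite subgroups of $\widehat K$, and $\mathfrak{sub}^\star_o(K)=(\V_K(1),\cap,v_o)$, the semilattice of open subgroups of $K$, which moreover is compatible with the endomorphisms induced by $\f$ and $\widehat\f$.

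\textbf{Key steps.} First I would recall the standard annihilator correspondence: for a closed subgroup $H\le K$ one sets $H^\perp=\{\chi\in\widehat K:\chi(H)=0\}$, and this gives an inclusion-reversing bijection between closed subgroups of $K$ and closed subgroups of $\widehat K$, with $(\widehat{K}/H^\perp)\cong\widehat H$ and $\widehat{K/H}\cong H^\perp$. Restricting this correspondence, open subgroups $V\in\V_K(1)$ (which have finite index since $K$ is compact) correspond exactly to finite subgroups $V^\perp\in\sF(\widehat K)$, and one has $[K:V]=|V^\perp|$; hence $v_o(V)=\log[K:V]=\log|V^\perp|=v(V^\perp)$, so the bijection $V\mapsto V^\perp$ is norm-preserving. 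Since it is also inclusion-reversing, it sends $\cap$ to the join (product) in $\sF(\widehat K)$, i.e. it is a semilattice isomorphism $\mathfrak{sub}^\star_o(K)\to\mathfrak{sub}(\widehat K)$. Second, I would check functoriality: for $\f:K\to K$ continuous, the induced map on duals $\widehat\f:\widehat K\to\widehat K$ satisfies $\widehat\f(V^\perp)=(\f^{-1}(V))^\perp$, so the square relating $\mathfrak{sub}^\star_o(\f)\colon V\mapsto\f^{-1}(V)$ on the $K$-side to $\mathfrak{sub}(\widehat\f)\colon F\mapsto\widehat\f(F)$ on the $\widehat K$-side commutes through the isomorphism $V\mapsto V^\perp$. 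Third, applying $h_\Se$ to this commuting diagram and using that $h_\Se$ is invariant under conjugation by isomorphisms of normed semigroups (Corollary on Invariance under conjugation), we get
$$h_{top}(\f)=h_{\mathfrak{sub}^\star_o}(\f)=h_\Se(\mathfrak{sub}^\star_o(\f))=h_\Se(\mathfrak{sub}(\widehat\f))=h_{\mathfrak{sub}}(\widehat\f)=\ent(\widehat\f),$$
which is the assertion.

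\textbf{Main obstacle.} The routine parts are the norm identity and the inclusion-reversal; the step requiring the most care is identifying, for a totally disconnected compact abelian group $K$, that the open subgroups are exactly the finite-index closed subgroups and that under duality these are precisely the \emph{finite} subgroups of $\widehat K$ — equivalently, that $\widehat K$ is a torsion discrete group and every finite subgroup arises as $V^\perp$ for a unique open $V$. This rests on van Dantzig's theorem (a totally disconnected compact group has a neighborhood base at $1$ of open subgroups) together with the Pontryagin duality dictionary translating ``totally disconnected compact'' into ``discrete torsion''; once this identification is in place, the compatibility of $\mathfrak{sub}^\star_o(K)$ with $\mathfrak{cov}_s(K)$ noted at the end of the previous subsection also shows the result is consistent with the description of $h_{top}$ via open covers. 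I would also remark that the hypothesis that $\f$ need not be surjective causes no difficulty here, since both functors are defined on all continuous endomorphisms.
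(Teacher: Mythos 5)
Your proposal is correct and follows essentially the same route as the paper's proof: both reduce the statement to the norm-preserving annihilator isomorphism $V\mapsto V^\perp$ between the normed semilattice $\mathfrak{sub}^\star_o(K)$ of open subgroups of $K$ and the normed semilattice $\mathfrak{sub}(\widehat K)$ of finite subgroups of $\widehat K$, intertwining $\mathfrak{sub}^\star_o(\f)$ with $\mathfrak{sub}(\widehat\f)$, and then invoke invariance of $h_\Se$ under conjugation. The only difference is organizational: the paper re-derives the identification $h_{top}(\f)=h_\Se(\mathfrak{sub}^\star_o(\f))$ inside the proof via the cofinality of $\mathfrak{cov}_s(K)$ in $\mathfrak{cov}(K)$ (zero-dimensionality plus the base of open subgroups), whereas you cite it from the preceding subsection and relegate the van Dantzig argument to a remark.
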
 
\begin{proof}
Since totally disconnected compact groups are zero-dimensional, 
every open finite cover $\mathcal U$ of $K$ admits a refinement consisting of clopen sets in $K$. 
Moreover, since $K$ admits a local base at 0 formed by open subgroups, it is possible to find a refinement of $\mathcal U$
of the form   $\mathcal U_V$ for some open subgroup $ \mathcal V$. This proves that $\mathfrak{cov}_s(K)$ is cofinal in $\mathfrak{cov}(K)$. Hence, we have 
$$
h_{top}(\phi)=h_\Se(\mathfrak{cov}(\phi))=h_\Se(\mathfrak{cov}_s(\phi)).
$$
Moreover, we have seen above that $\mathfrak{cov}_s(K)$ is isomorphic to $\mathfrak{sub}^\star_o(K)$, so
one can conclude that $$h_\Se(\mathfrak{cov}_s(\phi))=h_\Se(\mathfrak{sub}^\star_o (\phi)).$$
Now the semilattice isomorphism $L\to \mathcal F(\widehat K)$ given by $N \mapsto N^\perp$ preserves the norms, so it is an isomorphism in $\Se$. Hence $$h_\Se(\mathfrak{sub}^\star_o (\phi))=h_\Se(\mathfrak{sub}(\widehat \phi))$$ and consequently
$$h_{top}(\phi)= h_\Se(\mathfrak{sub}(\widehat \phi))=\ent(\widehat \phi).$$
\end{proof}

The proof of Weiss Bridge Theorem can be reassumed by the following diagram.
\begin{equation*}
\xymatrix@R=6pt@C=37pt
{
(\widehat K,\widehat\phi)\ar[r]^{\mathfrak{sub}}\ar@/^4.5pc/[rrrddd]^{h_{\mathfrak{sub}}}&((\sF(\widehat K),+);\mathfrak{sub}(\widehat \phi))\ar[dd]_{\widehat{}}\ar[dddrr]|-{h_\Se}& &\\
&	&	&\\
&((\mathfrak{sub}^\star_o(K),\cap);\mathfrak{sub}^\star_o(\phi))\ar[dd]_{\gamma}	&	&	\\
&	&	&	\R^+	\\
&((\cov_{s}(K),\vee);\phi)\ar@{^{(}->}[dd]_{\iota}		&	&	\\
&	 &	&\\
(K,\phi)\ar@/^3pc/[uuuuuu]^{\widehat{}\;\;}\ar[r]_{\mathfrak{cov}}\ar@/_4.5pc/[rrruuu]_{h_{\mathfrak{cov}}}&((\cov(K),\vee);\mathfrak{cov}(\phi))\ar[uuurr]|-{h_\Se} 	&	&
}						
\end{equation*}

Similar Bridge Theorems hold for other known entropies; they can be proved using analogous diagrams (see \cite{DGV1}).
The first one that we recall concerns the algebraic entropy $\ent$ and the adjoint algebraic entropy $\ent^\star$:

\begin{Theorem}
Let $\f: G\to G$ be an endomorphism of an abelian group. Then $\ent^\star(\f) = \ent(\widehat\f)$. 
\end{Theorem}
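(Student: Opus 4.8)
The plan is to exhibit the two sides of the identity as the semigroup entropies of two isomorphic flows in $\Se$ and then apply Invariance under conjugation. Recall from the excerpt that $\ent^\star = h_{\mathfrak{sub}^\star}$, where $\mathfrak{sub}^\star$ assigns to an abelian group $G$ the normed semilattice $(\sC(G),\cap,v)$ of finite-index subgroups with $v(N)=\log[G:N]$ and to an endomorphism $\f$ the map $N\mapsto\f^{-1}(N)$; and that $\ent = h_{\mathfrak{sub}}$, where $\mathfrak{sub}$ assigns to an abelian group $A$ the normed semilattice $(\sF(A),\cdot,v)$ of finite subgroups with $v(F)=\log|F|$ and to $\psi$ the map $F\mapsto\psi(F)$. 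Hence, taking $A=\widehat G$ and $\psi=\widehat\f$, it suffices to prove that the flows $(\sC(G),\ N\mapsto\f^{-1}(N))$ and $(\sF(\widehat G),\ F\mapsto\widehat\f(F))$ are isomorphic in $\Se$.

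The candidate isomorphism is $\Phi:\sC(G)\to\sF(\widehat G)$, $N\mapsto N^{\perp}=\{\chi\in\widehat G:\chi|_N=1\}$. By Pontryagin duality $\Phi$ is a bijection onto the finite subgroups of $\widehat G$: if $[G:N]<\infty$ then $N^{\perp}\cong\widehat{G/N}$ is finite of order $[G:N]$, while conversely any finite subgroup $F\leq\widehat G$ is closed, satisfies $(F^{\perp})^{\perp}=F$, and $\widehat{G/F^{\perp}}\cong F$ forces $F^{\perp}$ to be of finite index. The annihilator identity $(N_1\cap N_2)^{\perp}=N_1^{\perp}\cdot N_2^{\perp}$ makes $\Phi$ a semilattice homomorphism, and $v(N^{\perp})=\log|N^{\perp}|=\log[G:N]=v(N)$ makes it norm-preserving; thus $\Phi$ is an isomorphism in $\SL$, in particular in $\Se$. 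This is the exact analogue, for the discrete/compact pair, of the norm-preserving semilattice isomorphism already used in the proof of Theorem~\ref{WBT}, with finite-index subgroups replacing open subgroups.

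Next I would verify that $\Phi$ intertwines the two endomorphisms, i.e. $(\f^{-1}(N))^{\perp}=\widehat\f(N^{\perp})$ for every $N\in\sC(G)$, where $\widehat\f(\chi)=\chi\circ\f$. The inclusion $\widehat\f(N^{\perp})\subseteq(\f^{-1}(N))^{\perp}$ is immediate, since $\chi|_N=1$ and $\f(x)\in N$ give $(\chi\circ\f)(x)=1$. For the converse, a character $\psi$ killing $\f^{-1}(N)$ also kills $\ker\f\subseteq\f^{-1}(N)$, hence factors as $\psi=\psi''\circ\f$ with $\psi''$ a character of $\operatorname{im}\f$; one checks $\psi''$ kills $\operatorname{im}\f\cap N$, so it descends to a character of the subgroup $(\operatorname{im}\f+N)/N$ of $G/N$, which extends to a character of $G/N$ by injectivity (divisibility) of $\T=\R/\Z$ in $\mathbf{AbGrp}$ and pulls back to some $\chi\in N^{\perp}$ with $\chi\circ\f=\psi$. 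Consequently $\Phi$ conjugates $N\mapsto\f^{-1}(N)$ into $F\mapsto\widehat\f(F)$, and the Corollary on Invariance under conjugation gives $h_\Se(\mathfrak{sub}^\star(\f))=h_\Se(\mathfrak{sub}(\widehat\f))$, that is $\ent^\star(\f)=\ent(\widehat\f)$.

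The only step that is not routine annihilator calculus is this last intertwining identity, and within it the nontrivial inclusion, which depends on the extendability of characters (injectivity of $\T$). One should also note that $\mathfrak{sub}$ here is applied to the underlying abstract group of $\widehat G$; this causes no discrepancy because every finite subgroup of the compact group $\widehat G$ is automatically closed and therefore of the form $N^{\perp}$, so the image of $\Phi$ is indeed all of $\sF(\widehat G)$.
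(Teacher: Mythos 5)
Your proof is correct and follows exactly the route the paper indicates for this theorem (the "analogous diagram" to the Weiss Bridge Theorem): the norm-preserving semilattice isomorphism $N\mapsto N^{\perp}$ between $(\sC(G),\cap)$ and $(\sF(\widehat G),\cdot)$ conjugating $N\mapsto\f^{-1}(N)$ into $F\mapsto\widehat\f(F)$, followed by Invariance under conjugation of $h_\Se$. The only material you add beyond what the paper sketches is the explicit verification of the intertwining identity $(\f^{-1}(N))^{\perp}=\widehat\f(N^{\perp})$ via the injectivity of $\T$, which the paper leaves implicit; that verification is sound.
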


The other two Bridge Theorems that we recall here connect respectively the set-theoretic entropy $\mathfrak h$ with the topological entropy $h_{top}$ and the contravariant set-theoretic entropy $\mathfrak h^*$ with the algebraic entropy $h_{alg}$. 

We need to recall first the notion of generalized shift, which extend the Bernoulli shifts. For a map $\lambda:X\to Y$ between two non-empty sets 
and a fixed non-trivial group $K$, define $\sigma_\lambda:K^Y \to K^X$ by $\sigma_\lambda(f) = f\circ \lambda $ for $f\in K^Y$. For $Y = X$, 
$\lambda$ is a self-map of $X$ and $\sigma_\lambda$ was called \emph{generalized shift} of $K^X$
(see \cite{AADGH,AZD}). In this case $\bigoplus_X K$ is a $\sigma_\lambda$-invariant subgroup of $K^X$ precisely when $\lambda$ is finitely many-to-one. We denote $\sigma_\lambda\restriction_{\bigoplus_XK}$ by $\sigma_\lambda^\oplus$. 

\medskip
Item (a)  in the next theorem was proved in \cite{AZD} (see also \cite[Theorem 7.3.4]{DG-islam}) while item (b) is  \cite[Theorem 7.3.3]{DG-islam} (in the abelian case it was obtained in \cite{AADGH}).

\begin{Theorem} \emph{\cite{AZD}}
Let $K$ be a non-trivial finite group, let $X$ be a set and $\lambda:X\to X$ a self-map. 
\begin{itemize}
\item[(a)]Then $h_{top}(\sigma_\lambda)=\mathfrak h(\lambda)\log|K|$.
\item[(b)] If $\lambda$ is finite-to-one, then $h_{alg}(\sigma_\lambda^\oplus)=\mathfrak h^*(\lambda)\log|K|$.
\end{itemize}
\end{Theorem}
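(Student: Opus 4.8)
The plan is to treat items (a) and (b) separately, but in both cases to reduce the statement to a computation of a trajectory norm inside the appropriate normed semigroup, exploiting the Bernoulli normalization theorem as the engine. For item (a), I would first recall that $h_{top}=h_{\mathfrak{cov}}$ and that for a finite discrete group $K$ the power $K^X$ is a compact (Cantor-type) space, so $\sigma_\lambda:K^X\to K^X$ is a legitimate flow in $\mathbf{CTop}$. The key observation is that the canonical open subbase of $K^X$ consisting of the ``cylinder'' covers $\mathcal U_x=\{\pi_x^{-1}(k):k\in K\}$ for $x\in X$ is cofinal in $\mathfrak{cov}(K^X)$ (every finite open cover is refined by a finite join $\mathcal U_{x_1}\vee\dots\vee\mathcal U_{x_m}$, i.e.\ by a cover depending on finitely many coordinates), so that $h_{\Se}(\mathfrak{cov}(\sigma_\lambda))$ can be computed on the subsemigroup generated by these cylinder covers. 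Since $\sigma_\lambda^{-1}(\mathcal U_x)=\mathcal U_{\lambda(x)}$, the $n$-th $\sigma_\lambda$-cotrajectory of a finite join $\bigvee_{x\in A}\mathcal U_x$ (with $A$ finite) is $\bigvee_{x\in A\cup\lambda(A)\cup\dots\cup\lambda^{n-1}(A)}\mathcal U_x$, whose cardinality of minimal subcover is exactly $|K|^{|A\cup\lambda(A)\cup\dots\cup\lambda^{n-1}(A)|}$ (the coordinates are independent). Taking $\log$, dividing by $n$, and passing to the limit, the quantity $\frac{1}{n}|A\cup\lambda(A)\cup\dots\cup\lambda^{n-1}(A)|$ is precisely the expression whose supremum over finite $A$ defines $\mathfrak h(\lambda)$ (the set-theoretic entropy via the functor $\mathfrak{atr}$). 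Hence $h_{top}(\sigma_\lambda)=\mathfrak h(\lambda)\log|K|$.

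For item (b), with $\lambda$ finite-to-one so that $\sigma_\lambda^\oplus$ is an endomorphism of $\bigoplus_X K$, I would use $h_{alg}=h_{\mathfrak{pet}}$ and compute inside the normed semigroup $\sM(\bigoplus_X K)$ of finite nonempty subsets with norm $\log|\cdot|$. It suffices to evaluate $h_{\Se}$ on a cofinal family of generating elements: take $E_A=\bigoplus_{x\in A}K\subseteq\bigoplus_X K$ for $A\subseteq X$ finite (a finite subgroup, hence a finite subset). One checks $\sigma_\lambda^\oplus(E_A)=E_{\lambda^{-1}(A)}$ — or rather, since $\sigma_\lambda$ acts by precomposition, the image of a function supported on $A$ is supported on $\lambda^{-1}(A)$, which is finite by the finite-to-one hypothesis — so the $n$-th $\sigma_\lambda^\oplus$-trajectory $T_n(\sigma_\lambda^\oplus,E_A)$ is the subgroup supported on $\lambda^{-1}(A)\cup\dots\cup\lambda^{-(n-1)}(A)$ together with $A$ itself, i.e.\ on $A\cup\lambda^{-1}(A)\cup\dots\cup\lambda^{-(n-1)}(A)$, with cardinality $|K|^{|A\cup\lambda^{-1}(A)\cup\dots\cup\lambda^{-(n-1)}(A)|}$. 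Dividing $\log$ of this by $n$ and taking the limit yields exactly $\log|K|$ times the quantity defining the contravariant set-theoretic entropy $\mathfrak h^*(\lambda)=h_{\mathfrak{str}}(\lambda)$ (the functor $\mathfrak{str}$ uses $A\mapsto\lambda^{-1}(A)$). A density/cofinality argument — finite subsets of $\bigoplus_X K$ are dominated, up to bounded additive error in the norm, by the subgroups $E_A$ — then gives $h_{alg}(\sigma_\lambda^\oplus)=\mathfrak h^*(\lambda)\log|K|$.

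The main obstacle I anticipate is not the trajectory bookkeeping, which is essentially combinatorial, but rather the two cofinality/reduction steps: in (a), justifying rigorously that the cylinder covers $\mathcal U_x$ form a cofinal subset of $\mathfrak{cov}(K^X)$ (this uses compactness of $K^X$ together with the fact that $K$ is finite, so every open set is a finite union of basic clopen cylinders, and every finite open cover therefore refines to one built from finitely many coordinates); and in (b), controlling arbitrary finite subsets $F\subseteq\bigoplus_X K$ by the ``rectangular'' subgroups $E_A$ — one must verify that $\mathfrak h^*(\lambda)$, computed over all finite subsets $A\subseteq X$, agrees with the supremum of the trajectory growth over the $E_A$, which rests on the monotonicity of $h_{\Se}$ for subsemigroups and on the elementary inequality relating $|F\cdot\sigma_\lambda^\oplus(F)\cdots|$ to $|K|^{|{\rm supp}(F)\cup\lambda^{-1}({\rm supp}(F))\cup\cdots|}$. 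Once these cofinality reductions are in place, both equalities drop out of the definitions of $\mathfrak h$ and $\mathfrak h^*$ given earlier. For full details I would refer, as the paper does, to \cite{AZD} and \cite{DG-islam}.
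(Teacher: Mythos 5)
Your treatment of item (a) is essentially correct and follows the route one would expect from the paper's machinery: the covers $\mathcal U_A=\bigvee_{x\in A}\mathcal U_x$ depending on finitely many coordinates are cofinal in $\cov(K^X)$ by compactness, $\sigma_\lambda^{-1}(\mathcal U_x)=\mathcal U_{\lambda(x)}$, and the minimal subcover of $\bigvee_{y\in B}\mathcal U_y$ has exactly $|K|^{|B|}$ elements, so the local entropies are $\log|K|$ times those of $\mathfrak{atr}(\lambda)$. (The paper itself gives no proof, citing \cite{AZD} and \cite{DG-islam}.)

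Item (b), however, has a genuine gap at its central computational step. You claim $|T_n(\sigma_\lambda^\oplus,E_A)|=|K|^{|A\cup\lambda^{-1}(A)\cup\cdots\cup\lambda^{-(n-1)}(A)|}$. This is false: $\sigma_\lambda^j(E_A)$ is not $E_{\lambda^{-j}(A)}$ but only the subgroup of those elements of $E_{\lambda^{-j}(A)}$ that are constant on the fibres of $\lambda^j$, so $|\sigma_\lambda^j(E_A)|\le |K|^{|A|}$ and hence $|T_n(\sigma_\lambda^\oplus,E_A)|\le |K|^{n|A|}$, whereas $|K|^{|A\cup\lambda^{-1}(A)\cup\cdots|}$ can grow strictly faster. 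Concretely, take $X=\{a_n:n\in\N\}\cup\{b_n:n\in\N_+\}$ with $\lambda(a_0)=a_0$ and $\lambda(a_n)=\lambda(b_n)=a_{n-1}$ for $n\geq 1$ (finite-to-one), $A=\{a_0\}$, $K=\Z/2\Z$. Then $A\cup\lambda^{-1}(A)\cup\cdots\cup\lambda^{-(n-1)}(A)=\{a_0,\dots,a_{n-1},b_1,\dots,b_{n-1}\}$ has $2n-1$ elements, but $T_n(\sigma_\lambda^\oplus,E_A)$ is generated by the indicator functions of the nested sets $\lambda^{-j}(a_0)$ and has only $2^n$ elements; one checks that $h_{alg}(\sigma_\lambda^\oplus)=\log 2$, while your formula would yield $2\log 2$. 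The point is that the correct growth rate counts, roughly, the number of pairwise disjoint infinite backward chains (strings) of $\lambda$ through $A$ — each independent string contributes one free copy of $K$ per iteration — and the naive count over the support $A\cup\lambda^{-1}(A)\cup\cdots$ overestimates this exactly when distinct points of $\lambda^{-j}(A)$ lie in a common fibre of $\lambda^j$. (Relatedly, the thumbnail description of $\mathfrak{str}$ in the survey cannot be taken literally as a definition of $\mathfrak h^*$: the map $A\mapsto\lambda^{-1}(A)$ is not even contractive for the cardinality norm when $\lambda$ is merely finite-to-one.) Your reduction from arbitrary finite subsets to the subgroups $E_A$ is fine, but the local computation, which is the heart of (b), must be redone along the lines of \cite{AADGH} and \cite{DG-islam}.
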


In terms of functors, fixed a non-trivial finite group $K$, let $\mathcal F_K: \mathbf{Set}\to \mathbf{TdCGrp}$ be the functor defined on flows,
sending a  non-empty set $X$ to $K^X$, $\emptyset$ to $0$, a self-map $\lambda:X\to X$ to $\sigma_\lambda:K^Y\to K^X$
when $X\ne \emptyset$. Then the pair $(\mathfrak h, h_{top})$ satisfies $(BT_{\mathcal F_K})$ with constant $\log |K|$.

Analogously, let $\mathcal G_K: \mathbf{Set}_\mathrm{fin}\to \mathbf{Grp}$ be the functor defined on flows sending $X$ to $\bigoplus_X K$ and a finite-to-one self-map $\lambda:X\to X$ to $\sigma_\lambda^\oplus:\bigoplus_X K\to \bigoplus_X K$. Then the pair $(\mathfrak h^*, h_{alg})$ satisfies $(BT_{\mathcal G_K})$ with constant $\log |K|$. 

\begin{Remark}
At the conference held in Porto Cesareo, R. Farnsteiner posed the following question related to the Bridge Theorem.
Is $h_{top}$ studied in non-Hausdorff compact spaces?

The question was motivated by the fact that the prime spectrum $\mathrm{Spec}(A)$ of a commutative ring $A$ is usually a non-Hausdorff compact space. Related to this question and to the entropy $h_\lambda$ defined for endomorphisms $\phi$ of local Noetherian rings $A$ (see \S \ref{NewSec1}), one may ask if there is any relation (e.g., a weak Bridge Theorem) between these two entropies and the functor $\mathrm{Spec}$; more precisely, one can ask whether there is any stable relation between $h_{top}(\mathrm{Spec}(\phi))$ and $h_\lambda(\phi)$.
\end{Remark}

\section{Algebraic entropy and its specific properties}\label{alg-sec}

In this section we give an overview of the basic properties of the algebraic entropy and the adjoint algebraic entropy. Indeed, we have seen that they satisfy the general scheme presented in the previous section, but on the other hand they were defined for specific group endomorphisms and these definitions permit to prove specific features, as we are going to briefly describe.
For further details and examples see \cite{DG}, \cite{DGS} and \cite{DG-islam}.

\subsection{Definition and basic properties}

Let $G$ be a group and $\phi:G\to G$ an endomorphism. For a finite subset $F$ of $G$, and for $n\in\N_+$, the \emph{$n$-th $\phi$-trajectory} of $F$ is 
\begin{equation*}\label{T_n}
T_n(\phi,F)=F\cdot\phi(F)\cdot\ldots\cdot\phi^{n-1}(F);
\end{equation*}
moreover let
\begin{equation}\label{gamma}
{\gamma_{\phi,F}(n)}=|T_n(\phi,F)|.
\end{equation}
The \emph{algebraic entropy of $\phi$ with respect to $F$} is 
\begin{equation*}\label{H}
H_{alg}(\phi,F)={\lim_{n\to \infty}\frac{\log \gamma_{\phi,F}(n) }{n}};
\end{equation*}
 This limit exists as $H_{alg}(\phi,F)=h_\Se(\mathcal H(\phi),F)$ and so Theorem \ref{limit} applies
(see also \cite{DG-islam} for a direct proof of the existence of this limit and \cite{DG} for the abelian case).
The \emph{algebraic entropy} of $\phi:G\to G$ is 
$$
h_{alg}(\phi)=\sup\{H_{alg}(\phi,F): F\ \text{finite subset of}\ G\}=h_\Se(\mathcal H(\phi)).
$$
Moreover
$$
\ent(\phi)=\sup\{H_{alg}(\phi,F): F\ \text{finite subgroup of}\ G\}.
$$
If $G$ is abelian, then $\ent(\phi)=\ent(\phi\restriction_{t(G)})= h_{alg}(\phi\restriction_{t(G)})$.

Moreover, $h_{alg}(\phi) = \ent(\phi)$ if $G$ is locally finite, that is every finite subset of $G$ generates a finite subgroup; note that every locally finite group is obviously torsion, while the converse holds true under the hypothesis that the group is abelian (but the solution of Burnside Problem shows that even groups of finite exponent fail to be locally finite).
 
\medskip
For every abelian group $G$, the identity map has $h_{alg}(\id_G)=0$ (as the normed semigroup $\mathcal H(G)$ is arithmetic, as seen above). Another basic example is given by the endomorphisms of $\Z$, indeed if $\f: \Z \to \Z$ is given by $\f(x) = mx$ for some positive integer $m$, then $h_{alg}(\f) = \log m$. 
The fundamental example for the algebraic entropy is the right Bernoulli shift:

\begin{Example}\label{shift}(Bernoulli normalization)
Let $K$ be a group.
\begin{itemize}
\item[(a)] The \emph{right Bernoulli shift} $\beta_K:K^{(\N)}\to K^{(\N)}$ is defined by
$$(x_0,\ldots,x_n,\ldots)\mapsto (1,x_0,\ldots,x_n,\ldots).$$
Then $h_{alg}(\beta_K)=\log|K|$, with the usual convention that $\log|K|=\infty$ when $K$ is infinite.
\item[(b)] The \emph{left Bernoulli shift} ${}_K\beta:K^{(\N)}\to K^{(\N)}$ is defined by
$$(x_0,\ldots,x_n,\ldots)\mapsto (x_1,\ldots,x_{n+1},\ldots).$$
Then $h_{alg}({}_K\beta)=0$, as ${}_K\beta$ is locally nilpotent.
\end{itemize}
\end{Example}

%\subsection{Basic general properties}
The following basic properties of the algebraic entropy are consequences of the general scheme and were proved directly in \cite{DG-islam}.

\begin{fact}\label{properties}
\emph{Let $G$ be a group and $\phi:G\to G$ an endomorphism.
\begin{itemize}
\item[(a)]\emph{[Invariance under conjugation]} If $\phi=\xi^{-1}\psi\xi$, where $\psi:H\to H$ is an endomorphism and $\xi:G\to H$ isomorphism, then $h_{alg}(\phi) = h_{alg}(\psi)$.
\item[(b)]\emph{[Monotonicity]} If $H$ is a $\phi$-invariant normal subgroup of the group $G$, and $\overline\phi:G/H\to G/H$ is the endomorphism induced by $\phi$, then $h_{alg}(\phi)\geq \max\{h_{alg}(\phi\restriction_H),h_{alg}(\overline{\phi})\}$.
\item[(c)]\emph{[Logarithmic Law]} For every $k\in\N$ we have $h_{alg}(\phi^k) = k \cdot h_{alg}(\phi)$; if $\phi$ is an automorphism, then $h_{alg}(\phi)=h_{alg}(\phi^{-1})$, so $h_{alg}(\phi^k)=|k|\cdot h_{alg}(\phi)$ for every $k\in\Z$.
\item[(d)]\emph{[Continuity]} If $G$ is direct limit of $\phi$-invariant subgroups $\{G_i : i \in I\}$, then $h_{alg}(\phi)=\sup_{i\in I}h_{alg}(\phi\restriction_{G_i}).$
\item[(e)]\emph{[Weak Addition Theorem]} If $G=G_1\times G_2$ and $\phi_i:G_i\to G_i$ is an endomorphism for $i=1,2$, then $h_{alg}(\phi_1\times\phi_2)=h_{alg}(\phi_1)+h_{alg}(\phi_2)$.
\end{itemize}
}
\end{fact}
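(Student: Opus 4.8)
The plan is to transfer each property from the semigroup entropy $h_\Se$ to $h_{alg}$ along the functor $\mathfrak{pet}\colon\mathbf{Grp}\to\Se$, $G\mapsto(\mathcal H(G),v)$ with $v(F)=\log|F|$, using the identity $h_{alg}=h_\Se\circ\mathfrak{pet}$ together with the facts about $\mathfrak{pet}$ already recorded: it preserves commutative squares and isomorphisms, sends subgroup embeddings to embeddings and quotient maps (factors) to surjective morphisms in $\Se$, sends direct limits to direct limits, and the norm $v$ on each $\mathcal H(G)$ is monotone for $\subseteq$ (hence $s$-monotone). Items (a), (b), (c), (d) are then immediate consequences of, respectively, Invariance under conjugation, Monotonicity for subsemigroups together with Monotonicity for factors, the Logarithmic Law together with Invariance under inversion, and Continuity for direct limits, all for $h_\Se$; so the only item that needs genuine work is (e).

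Concretely: for (a), $\mathfrak{pet}(\xi)$ is an isomorphism in $\Se$ with $\mathfrak{pet}(\phi)=\mathfrak{pet}(\xi)^{-1}\mathfrak{pet}(\psi)\mathfrak{pet}(\xi)$, so Invariance under conjugation gives $h_{alg}(\phi)=h_{alg}(\psi)$. For (b), $\mathcal H(H)$ is a $\mathcal H(\phi)$-invariant normed subsemigroup of $\mathcal H(G)$ with $\mathcal H(\phi)\restriction_{\mathcal H(H)}=\mathcal H(\phi\restriction_H)$, whence $h_{alg}(\phi)\geq h_{alg}(\phi\restriction_H)$ by Monotonicity for subsemigroups; and $\mathcal H(\pi)\colon\mathcal H(G)\to\mathcal H(G/H)$, induced by $\pi\colon G\to G/H$, is a surjection satisfying $\mathcal H(\pi)\mathcal H(\phi)=\mathcal H(\overline\phi)\mathcal H(\pi)$, whence $h_{alg}(\phi)\geq h_{alg}(\overline\phi)$ by Monotonicity for factors. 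For (c), functoriality gives $\mathfrak{pet}(\phi^k)=\mathcal H(\phi)^k$, and since $v$ is $s$-monotone the Logarithmic Law yields $h_{alg}(\phi^k)=k\,h_{alg}(\phi)$ for $k\in\N_+$; if $\phi$ is an automorphism then so is $\mathcal H(\phi)$, and Invariance under inversion gives $h_{alg}(\phi^{-1})=h_{alg}(\phi)$, extending the formula to $\Z\setminus\{0\}$. For (d), $\mathfrak{pet}$ sends the directed union $G=\varinjlim G_i$ to $\mathcal H(G)=\varinjlim\mathcal H(G_i)$ (every finite subset of $G$ lies in some $G_i$ by directedness), so Continuity for direct limits gives $h_{alg}(\phi)=\sup_i h_{alg}(\phi\restriction_{G_i})$.

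For (e) one cannot simply invoke the product in $\Se$: with the $\max$-norm, the Weak Addition Theorem for products would only give $\max\{h_{alg}(\phi_1),h_{alg}(\phi_2)\}$. Instead I would work inside $\mathcal H(G_1\times G_2)$ with the subsemigroup $T=\{A\times B: A\in\mathcal H(G_1),\,B\in\mathcal H(G_2)\}$. It is invariant under $\mathcal H(\phi_1\times\phi_2)$, since $(\phi_1\times\phi_2)(A\times B)=\phi_1(A)\times\phi_2(B)\in T$; it is cofinal in the ordered semigroup $(\mathcal H(G_1\times G_2),\subseteq)$, since any finite $F\subseteq G_1\times G_2$ satisfies $F\subseteq p_1(F)\times p_2(F)\in T$; and $v$ is monotone for $\subseteq$. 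Hence the equality clause of Monotonicity for subsemigroups gives $h_{alg}(\phi_1\times\phi_2)=h_\Se(\mathcal H(\phi_1\times\phi_2)\restriction_T)$. Finally, $(A,B)\mapsto A\times B$ is a bijective monoid homomorphism $\mathcal H(G_1)\oplus\mathcal H(G_2)\to T$ with $v_\oplus(A,B)=\log|A|+\log|B|=v(A\times B)$, so it is an isomorphism in $\mathfrak M$ intertwining $\mathcal H(\phi_1)\oplus\mathcal H(\phi_2)$ with $\mathcal H(\phi_1\times\phi_2)\restriction_T$; the Weak Addition Theorem for coproducts then yields $h_\Se(\mathcal H(\phi_1)\oplus\mathcal H(\phi_2))=h_{alg}(\phi_1)+h_{alg}(\phi_2)$, which completes (e).

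The main obstacle is precisely this last point: recognizing that the relevant copy of $\mathcal H(G_1)\oplus\mathcal H(G_2)$ sits inside $\mathcal H(G_1\times G_2)$ as the cofinal subsemigroup $T$, and that its induced norm coincides with the coproduct norm (which is where the additivity $\log|A\times B|=\log|A|+\log|B|$ enters), so that cofinality plus monotonicity of $v$ upgrade the automatic inequality $h_\Se(\phi\restriction_T)\leq h_\Se(\phi)$ to the equality needed to feed into the Weak Addition Theorem. Everything else reduces mechanically to the already-established properties of $h_\Se$ and of the functor $\mathfrak{pet}$.
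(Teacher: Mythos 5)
Your proposal is correct and follows exactly the strategy the paper itself indicates for Fact \ref{properties}, namely transporting the properties of $h_\Se$ along the functor $\mathfrak{pet}$ via $h_{alg}=h_\Se\circ\mathfrak{pet}$; the paper, however, does not write out any of these deductions (it states that they ``are consequences of the general scheme and were proved directly'' in the cited reference), so you are supplying details the text omits. Items (a)--(d) are indeed mechanical: $\mathfrak{pet}$ preserves isomorphisms (and $|\xi(F)|=|F|$ makes $\mathcal H(\xi)$ norm-preserving), sends embeddings to embeddings and quotients to surjections, the norm $\log|F|$ is $s$-monotone, and every finite subset of a directed union lies in some $G_i$. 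The one place where the general scheme of \S 3.1 does not directly apply is (e), since the Weak Addition Theorem is not among the automatically inherited properties listed there, and your resolution is the right one: the cofinal, invariant subsemigroup $T=\{A\times B\}$ of $(\mathcal H(G_1\times G_2),\subseteq)$, the equality clause of Monotonicity for subsemigroups, and the norm identity $\log|A\times B|=\log|A|+\log|B|$ identifying $T$ with the coproduct $\mathcal H(G_1)\oplus\mathcal H(G_2)$ so that the coproduct form of the Weak Addition Theorem yields the sum. The only caveat, which is an imprecision in the statement rather than in your argument, is that in (c) the case $k=0$ requires $h_{alg}(\id_G)=0$, which holds for abelian (more generally locally finite) $G$ but can fail for arbitrary groups; your restriction to $k\in\N_+$ is the correct reading.
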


As described for the semigroup entropy in the previous section, and as noted in \cite[Remark 5.1.2]{DG-islam}, for group endomorphisms $\phi:G\to G$ it is possible to define also a ``left'' algebraic entropy, letting for a finite subset $F$ of $G$, and for $n\in\N_+$,
$$T_n^\#(\phi,F)=\phi^{n-1}(F)\cdot\ldots\cdot\phi(F)\cdot F,$$
$$H^\#_{alg}(\phi,F)={\lim_{n\to \infty}\frac{\log |T^\#_n(\phi,F)|}{n}}$$
and
$$h_{alg}^\#(\phi)=\sup\{H^\#_{alg}(\phi,F): F\ \text{finite subset of}\ G\}.$$
Answering a question posed in \cite[Remark 5.1.2]{DG-islam}, we see now that $$h_{alg}(\phi)=h_{alg}^\#(\phi).$$
Indeed, every finite subset of $G$ is contained in a finite subset $F$ of $G$ such that $1\in F$ and $F={F^{-1}}$; for such $F$ we have
$$H_{alg}(\phi,F)=H_{alg}^\#(\phi,F),$$
since, for every $n\in\N_+$,
\begin{equation*}\begin{split}
T_n(\phi,F)^{-1}=\phi^{n-1}(F)^{-1}\cdot\ldots\cdot\phi(F)^{-1}\cdot F^{-1}=\\ \phi^{n-1}(F^{-1})\cdot\ldots\cdot\phi(F^{-1})\cdot F^{-1}=T_n^\#(\phi,F)
\end{split}\end{equation*}
and so $|T_n(\phi,F)|=|T_n(\phi,F)^{-1}|=|T_n^\#(\phi,F)|$.

\subsection{Algebraic Yuzvinski Formula, Addition Theorem and Uni\-que\-ness}\label{ab-sec}

We recall now some of the main deep properties of the algebraic entropy in the abelian case. They are not consequences of the general scheme and are proved using the specific features of the algebraic entropy coming from the definition given above. We give here the references to the papers where these results were proved, for a general exposition on algebraic entropy see the survey paper \cite{DG-islam}.

\medskip
The next proposition shows that the study of the algebraic entropy for torsion-free abelian groups can be reduced to the case of divisible ones. It was announced for the first time by Yuzvinski \cite{Y1}, for a proof see \cite{DG}. 

\begin{Proposition}\label{AA_} 
Let $G$ be a torsion-free abelian group, $\phi:G\to G$ an endomorphism and denote by $\widetilde\phi$ the (unique) extension of $\phi$ to the divisible hull $D(G)$ of $G$. Then $h_{alg}(\phi)=h_{alg}(\widetilde\phi)$.
\end{Proposition}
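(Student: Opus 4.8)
The plan is to reduce the computation of $h_{alg}(\widetilde\phi)$ on the divisible hull $D(G)$ to that of $h_{alg}(\phi)$ on $G$, using the two ``directions'' of monotonicity already available from the general scheme (Fact~\ref{properties}(b), or equivalently Monotonicity for subsemigroups applied to the functor $\mathfrak{pet}$). One inequality is immediate: since $G$ is a $\phi$-invariant subgroup of $D(G)$ and $\widetilde\phi\restriction_G = \phi$, we get $h_{alg}(\phi)\leq h_{alg}(\widetilde\phi)$ at once. The whole content of the statement is therefore the reverse inequality $h_{alg}(\widetilde\phi)\leq h_{alg}(\phi)$.

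For that, I would fix a finite subset $F$ of $D(G)$ and try to bound $H_{alg}(\widetilde\phi,F)$ by $h_{alg}(\phi)$. The key structural fact about the divisible hull of a torsion-free abelian group is that every finitely generated subgroup of $D(G)$ is contained in $\frac{1}{m}H$ for some finitely generated subgroup $H\leq G$ and some positive integer $m$; more precisely, there is an integer $m=m(F)\geq 1$ such that $mF\subseteq G$. Then $mT_n(\widetilde\phi,F) = T_n(\widetilde\phi, mF) = T_n(\phi, mF)$ (using that $\widetilde\phi$ is a homomorphism extending $\phi$ and that $mF\subseteq G$), and since multiplication by $m$ is injective on the torsion-free group $D(G)$, we get $\gamma_{\widetilde\phi,F}(n) = |T_n(\widetilde\phi,F)| = |T_n(\phi,mF)| = \gamma_{\phi,mF}(n)$ for every $n\in\N_+$. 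Dividing $\log$ by $n$ and letting $n\to\infty$ yields $H_{alg}(\widetilde\phi,F) = H_{alg}(\phi,mF)\leq h_{alg}(\phi)$. Taking the supremum over all finite $F\subseteq D(G)$ gives $h_{alg}(\widetilde\phi)\leq h_{alg}(\phi)$, and combined with the trivial inequality this proves equality.

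The main obstacle — really the only nontrivial point — is the identity $mF\subseteq G$ for a suitable $m$, i.e.\ the fact that a finite subset of $D(G)$ can be cleared of denominators simultaneously; this is where torsion-freeness is essential, since it guarantees both that $D(G)$ is well-defined as the localization $G\otimes\Q$ (so denominators make sense) and that multiplication by $m$ is injective (so that passing between $F$ and $mF$ does not change cardinalities of trajectories). One should also check the compatibility $m\cdot T_n(\widetilde\phi,F)=T_n(\phi,mF)$ carefully: this uses that $\widetilde\phi(mx)=m\widetilde\phi(x)$ and that $\widetilde\phi$ restricted to $G$ is $\phi$, plus the fact that the product (sum) of subsets commutes with the injective endomorphism ``multiplication by $m$'' in the abelian setting. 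Everything else is the routine limit manipulation already encapsulated in Theorem~\ref{limit}.
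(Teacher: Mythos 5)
Your proof is correct. Note that the paper does not actually prove Proposition~\ref{AA_} — it only cites \cite{DG} — so there is no in-text argument to match; judged on its own, your reduction is sound: the inequality $h_{alg}(\phi)\leq h_{alg}(\widetilde\phi)$ is immediate since every finite $F\subseteq G$ has the same trajectories under $\phi$ and $\widetilde\phi$, and for the converse the denominator-clearing step works exactly as you say, because multiplication by $m$ is a homomorphism (hence carries the sumset $T_n(\widetilde\phi,F)$ onto $T_n(\widetilde\phi,mF)=T_n(\phi,mF)$, commuting with $\widetilde\phi$) and is injective on the $\Q$-vector space $D(G)$, so $\gamma_{\widetilde\phi,F}=\gamma_{\phi,mF}$ and $H_{alg}(\widetilde\phi,F)=H_{alg}(\phi,mF)\leq h_{alg}(\phi)$. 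It is worth observing that the same idea can be packaged entirely inside the general machinery the paper develops: $D(G)=\varinjlim_{n}\tfrac{1}{n!}G$ is a direct limit of $\widetilde\phi$-invariant subgroups, multiplication by $n!$ is an isomorphism $\tfrac{1}{n!}G\to G$ conjugating $\widetilde\phi\restriction_{\frac{1}{n!}G}$ to $\phi$, and then Continuity for direct limits together with Invariance under conjugation (Fact~\ref{properties}) gives $h_{alg}(\widetilde\phi)=\sup_n h_{alg}(\widetilde\phi\restriction_{\frac{1}{n!}G})=h_{alg}(\phi)$ with no explicit limit manipulation. Your version is more elementary and self-contained; the direct-limit version better illustrates the categorical scheme that is the point of the paper. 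Either is acceptable.
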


Let $f(t)=a_nt^n+a_1t^{n-1}+\ldots+a_0\in\Z[t]$ be a primitive polynomial and let $\{\lambda_i:i=1,\ldots,n\}\subseteq\mathbb C$ be the set of all roots of $f(t)$.
The \emph{(logarithmic) Mahler measure} of $f(t)$ is $$m(f(t))= \log|a_n| + \sum_{|\lambda_i|>1}\log |\lambda_i|.$$
The Mahler measure plays an important role in number theory and arithmetic geometry and is involved in the famous Lehmer Problem, asking whether $\inf\{m(f(t)):f(t)\in\Z[t]\ \text{primitive}, m(f(t))>0\}>0$ (for example see \cite{Ward0} and \cite{Hi}).

If $g(t)\in\Q[t]$ is monic, then there exists a smallest positive integer $s$ such that $sg(t)\in\Z[t]$; in particular, $sg(t)$ is primitive. The Mahler measure of $g(t)$ is defined as $m(g(t))=m(sg(t))$. Moreover, if $\phi:\Q^n\to \Q^n$ is an endomorphism, its characteristic polynomial $p_\phi(t)\in\Q[t]$ is monic, and the Mahler measure of $\phi$ is $m(\phi)=m(p_\phi(t))$.

\medskip
The  formula \eqref{yuzeq} below was given a direct proof recently in \cite{GV}; it is the algebraic counterpart of the so-called Yuzvinski Formula for the topological entropy \cite{Y1} (see also \cite{LW}). It gives the values of the algebraic entropy of linear transformations of finite dimensional rational vector spaces in terms of the Mahler measure, so it allows for a connection of the algebraic entropy with Lehmer Problem.

\begin{Theorem}[Algebraic Yuzvinski Formula] \label{AYF}\emph{\cite{GV}}
Let $n\in\N_+$ and $\phi:\Q^n\to\Q^n$ an endomorphism. Then 
\begin{equation}\label{yuzeq}
h_{alg}(\phi)=m(\phi).
\end{equation}
\end{Theorem}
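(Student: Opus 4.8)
The plan is to reduce the Algebraic Yuzvinski Formula to a sequence of manageable cases, exploiting the properties of $h_{alg}$ already collected in Fact \ref{properties} together with standard facts about the Mahler measure. The first reduction is dimensional: writing the characteristic polynomial $p_\phi(t)$ of $\phi:\Q^n\to\Q^n$ as a product of its (monic, rational) irreducible factors, one uses the rational canonical form to decompose $(\Q^n,\phi)$ — up to the isogeny that the Weak Addition Theorem and Invariance under conjugation tolerate — into a product of flows $(\Q^{n_i},\phi_i)$ with $\phi_i$ acting with irreducible characteristic polynomial $f_i(t)$. Since $m(p_\phi)=\sum_i m(f_i)$ and, by Fact \ref{properties}(e), $h_{alg}(\phi)=\sum_i h_{alg}(\phi_i)$, it suffices to prove \eqref{yuzeq} when $p_\phi(t)$ is irreducible over $\Q$; then $\Q^n\cong\Q[t]/(p_\phi(t))$ as a $\Q[t]$-module and $\phi$ is multiplication by $t$ on the number field $E=\Q(\lambda)$, $\lambda$ a root of $p_\phi$.

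The second step is to realize this multiplication-by-$\lambda$ map inside a locally compact setting where Haar measure makes the computation transparent. Concretely, one embeds $E$ diagonally into the adele-like product $E_\infty\times\prod_{p\in P} E_p$ over the (finitely many) places dividing the denominators and numerators of the entries of a matrix for $\phi$, so that $E$ sits as a discrete cocompact subgroup and multiplication by $\lambda$ becomes a topological automorphism whose effect on Haar measure at each place is $|\lambda|_v$. The algebraic entropy of $\phi$ on the discrete group $E$ can then be computed via the algebraic entropy (in the locally compact sense, as in \S\ref{NewSec2}) of this automorphism of the ambient locally compact group, using the volume-growth description \eqref{**} of $H_{alg}$: the $n$-th trajectory $T_n(\phi,U)$ of a small neighborhood $U$ grows in measure like $\prod_{v:\,|\lambda|_v>1}|\lambda|_v^n$, and the product formula packages the archimedean contribution $\sum_{|\lambda_i|>1}\log|\lambda_i|$ together with the non-archimedean one into exactly $\log|a_n|+\sum_{|\lambda_i|>1}\log|\lambda_i|=m(p_\phi)$, where $a_n$ is the leading coefficient of the primitive integral polynomial $s\cdot p_\phi(t)$.

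The remaining work is to make the passage between the discrete and the locally compact pictures rigorous: one must show that $h_{alg}$ of $\phi$ on the discrete abelian group $\Q^n$ (equivalently on a lattice in the adelic group) equals $h_{alg}$ of the induced automorphism of the locally compact group, which is a compatibility statement for the two incarnations of algebraic entropy. Here Proposition \ref{AA_} (reduction to the divisible hull) and the Continuity and Monotonicity properties of Fact \ref{properties} are the tools: one approximates $\Q^n$ by finitely generated $\phi$-invariant subgroups, computes the entropy on each via counting points in $T_n(\phi,F)$, and matches the exponential growth rate with the product of local volume distortions. I expect the main obstacle to be precisely this adelic bookkeeping — controlling the contribution of the finite set of "bad" primes and proving that the growth rate of the integer trajectories $|T_n(\phi,F)|$ is governed by the global height/Mahler measure rather than merely bounded by it. The archimedean estimate alone gives only an inequality $h_{alg}(\phi)\le \sum_{|\lambda_i|>1}\log|\lambda_i|$ (or the reverse, depending on the direction), and extracting the sharp constant $\log|a_n|$ genuinely requires the product formula across all places; getting the matching lower and upper bounds simultaneously, uniformly in $n$, is the technical heart of \cite{GV} and the step I would budget the most care for.
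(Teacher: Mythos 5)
The paper itself contains no proof of Theorem \ref{AYF}: it is quoted from \cite{GV}, and the surrounding text stresses that \cite{GV} gives a \emph{direct} proof, i.e.\ one that works straight from the counting function $\gamma_{\phi,F}(n)=|T_n(\phi,F)|$ on the discrete group $\Q^n$ and avoids both Pontryagin duality and the adelic machinery of the classical (topological) Yuzvinski/Lind--Ward arguments. Your outline is essentially that classical adelic strategy transplanted to the algebraic side, so it is a genuinely different route from the cited one; that is legitimate in principle, but as written it has two real gaps. First, a circularity: to reduce to irreducible $p_\phi(t)$ you must split off the primary components $\Q[t]/(f_i(t)^{e_i})$ of the $\Q[t]$-module $\Q^n$ and then reduce $\Q[t]/(f^{e})$ to $e$ copies of $\Q[t]/(f)$. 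The second step is an extension, not a direct product, so the Weak Addition Theorem (Fact \ref{properties}(e)) does not apply, and no isogeny turns a nontrivial Jordan-type module $\Q[t]/(f^{e})$ into $(\Q[t]/(f))^{e}$; what you need is additivity along $\phi$-invariant subgroups, i.e.\ Theorem \ref{AT} --- which in this development is \emph{deduced from} the Algebraic Yuzvinski Formula. Monotonicity gives the lower bound $h_{alg}(\phi)\geq\sum_i e_i\, h_{alg}(\phi_i)$ for free, but the matching upper bound in the reduction has to be argued independently.

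Second, the adelic step is both misstated and unproved. A number field $E$ is \emph{dense}, not discrete, in a finite product $E_\infty\times\prod_{p\in P}E_p$ of its completions; only the ring of $S$-integers $\mathcal O_{E,S}$ is a discrete cocompact lattice there, and $E$ is the direct limit of these as $S$ grows. So the argument must fix $S$, prove the compatibility $H_{alg}(\phi,F)$ (discrete counting, as in \eqref{gamma}) versus $H_{alg}(\tilde\phi,U)$ (Haar-measure growth, as in \eqref{**}) for the lattice $\mathcal O_{E,S}$ inside the $S$-adelic group, and then control the passage to the limit over $S$ via Fact \ref{properties}(d); each of these is nontrivial, and the lower bound for $h_{alg}(\phi)$ --- showing that the integer point count in $T_n(\phi,F)$ really achieves the volume growth $\prod_{v:|\lambda|_v>1}|\lambda|_v^{\,n}$, rather than merely being dominated by it --- is precisely the content of the theorem, not bookkeeping. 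You correctly flag this as the technical heart, but flagging it is not supplying it. (A small further point: $\phi$ is only assumed to be an endomorphism, so the case $p_\phi(0)=0$, where $\phi$ is not an automorphism of the ambient locally compact group, needs separate treatment.) In short: the skeleton is a coherent alternative to the direct proof of \cite{GV}, but the two load-bearing steps --- additivity without assuming Theorem \ref{AT}, and the discrete/locally-compact comparison with a genuine lower bound --- are missing.
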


The next property of additivity of the algebraic entropy was first proved for torsion abelian groups in \cite{DGSZ}, while the proof of the general case was given in \cite{DG} applying the Algebraic Yuzvinski Formula.

\begin{Theorem}[Addition Theorem]\emph{\cite{DG}}\label{AT}
Let $G$ be an abelian group, $\phi:G\to G$ an endomorphism, $H$ a $\phi$-invariant subgroup of $G$ and $\overline\phi:G/H\to G/H$ the endomorphism induced by $\phi$. Then $$h_{alg}(\phi)=h_{alg}(\phi\restriction_H)+ h_{alg}(\overline\phi).$$
\end{Theorem}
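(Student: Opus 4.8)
The plan is to prove the Addition Theorem by establishing the two inequalities $h_{alg}(\phi)\le h_{alg}(\phi\restriction_H)+h_{alg}(\overline\phi)$ and $h_{alg}(\phi)\ge h_{alg}(\phi\restriction_H)+h_{alg}(\overline\phi)$ separately. The first (subadditivity) is the soft half: fix a finite subset $F$ of $G$ and, after enlarging it, assume $1\in F=F^{-1}$ (harmless, exactly as in the argument for $h_{alg}=h^\#_{alg}$ above). Writing $\pi\colon G\to G/H$ and $\overline F=\pi(F)$, the map $\pi$ sends $T_n(\phi,F)$ onto $T_n(\overline\phi,\overline F)$; since $G$ is abelian, a coset-counting of the fibres of this surjection produces a finite set $F'\subseteq H$, depending only on $F$, with $\gamma_{\phi,F}(n)\le\gamma_{\overline\phi,\overline F}(n)\cdot\gamma_{\phi\restriction_H,F'}(n)$ for every $n$. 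Taking logarithms, dividing by $n$ and letting $n\to\infty$ gives $H_{alg}(\phi,F)\le H_{alg}(\overline\phi,\overline F)+H_{alg}(\phi\restriction_H,F')$, and a supremum over $F$ yields ``$\le$''.

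For the reverse inequality ``$\ge$'' — the real content — I would reduce to two pure cases. First, by Continuity for direct limits (Fact \ref{properties}(d)) applied to $G/H=\varinjlim K_j$ over the finitely generated $\phi$-invariant subgroups $K_j$ of $G/H$, pulling each $K_j$ back to $G_j\supseteq H$ with $G_j/H=K_j$, it suffices to prove ``$\ge$'' for the pairs $(G_j,H)$, so we may assume $G/H$ finitely generated. Then, using the structure theory of abelian groups (the torsion subgroup $t(G)$ and the purification of $H$ in $G$ are $\phi$-invariant) together with the already available Monotonicity, one splits the identity into the case where $G$ is torsion and the case where $G$ is torsion-free, propagating the equality through the relevant $\phi$-invariant filtrations. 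The torsion case is carried out in \cite{DGSZ}: one reduces to primary components and ultimately to finite groups, where the estimate is obtained by an explicit combinatorial analysis of the trajectories $T_n(\phi,F)$ of finite subgroups and finite subsets.

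The torsion-free case is where the Algebraic Yuzvinski Formula enters. Here Proposition \ref{AA_} allows one to pass to divisible hulls, replacing $(G,H,\phi)$ by $(V,W,\widetilde\phi)$ with $V$ a $\Q$-vector space and $W$ a $\widetilde\phi$-invariant subspace; a further application of continuity reduces to $\dim_\Q V<\infty$ (the infinite-dimensional contributions come from Bernoulli shifts and force both sides to equal $\infty$, where there is nothing to prove). For finite-dimensional $V$, the Algebraic Yuzvinski Formula (Theorem \ref{AYF}) turns the claim into $m(\widetilde\phi)=m(\widetilde\phi\restriction_W)+m(\overline{\widetilde\phi})$; choosing a basis of $V$ adapted to $W$ makes the matrix of $\widetilde\phi$ block–triangular, so $p_{\widetilde\phi}(t)=p_{\widetilde\phi\restriction_W}(t)\cdot p_{\overline{\widetilde\phi}}(t)$, and the identity then follows from the additivity $m(fg)=m(f)+m(g)$ of the logarithmic Mahler measure — immediate from the defining formula for $m$ recalled above, together with Gauss's Lemma to handle the normalization $m(g)=m(sg)$ for monic $g\in\Q[t]$. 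Finally one reassembles the two pure cases and undoes the reductions.

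The main obstacle is the Algebraic Yuzvinski Formula itself: it is the genuinely deep external input (proved in \cite{GV}), and the torsion-free case — equivalently, the precise behaviour of the algebraic entropy on $\Q$-vector spaces — is exactly what was missing before \cite{DG}. A second, more bureaucratic difficulty is the reduction to the two pure cases: matching $\phi$-invariant filtrations of $G$ with those of $H$ and $G/H$, passing through purifications and divisible hulls, and checking that Monotonicity and Continuity propagate the identity through each step requires care, even though no single step is hard.
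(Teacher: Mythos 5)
Your overall architecture is the one the paper points to: the theorem is stated here without proof, with the torsion case attributed to \cite{DGSZ} and the general case to \cite{DG} via the Algebraic Yuzvinski Formula, and your torsion-free branch (divisible hulls via Proposition \ref{AA_}, reduction to finite $\Q$-dimension by continuity, block-triangularization, additivity of the Mahler measure) is indeed the route of \cite{DG}. The problem is that you have the two inequalities exactly backwards, and the argument you offer for the one you call ``the soft half'' has a genuine gap. The genuinely soft direction is ``$\geq$'': given finite $F_1\subseteq H$ and finite $\overline{F_2}\subseteq G/H$, lift $\overline{F_2}$ to a finite $F_2\subseteq G$ and set $F=F_1\cdot F_2$; then $T_n(\phi,F)\supseteq T_n(\phi,F_1)\cdot T_n(\phi,F_2)$, and over each point of $T_n(\overline\phi,\overline{F_2})$ the fibre of $\pi$ contains a translate of $T_n(\phi\restriction_H,F_1)$, so $\gamma_{\phi,F}(n)\geq\gamma_{\overline\phi,\overline{F_2}}(n)\cdot\gamma_{\phi\restriction_H,F_1}(n)$ and the sup over $F_1,\overline{F_2}$ gives $h_{alg}(\phi)\geq h_{alg}(\phi\restriction_H)+h_{alg}(\overline\phi)$.

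The direction ``$\leq$'' is the real content, and your fibre count does not establish it. What the count gives is that each fibre of $\pi\restriction_{T_n(\phi,F)}$ over a point of $T_n(\overline\phi,\overline F)$ has cardinality at most $|\bigl(T_n(\phi,F)\cdot T_n(\phi,F)^{-1}\bigr)\cap H|\leq|T_n(\phi,F\cdot F^{-1})\cap H|$. This is a subset of $H$, but it consists of products $f_0\cdot\phi(f_1)\cdots\phi^{n-1}(f_{n-1})$ whose individual factors and partial products need not lie in $H$; there is no finite $F'\subseteq H$, depending only on $F$, with $T_n(\phi,F\cdot F^{-1})\cap H\subseteq T_n(\phi\restriction_H,F')$ in general, so the claimed bound $\gamma_{\phi,F}(n)\leq\gamma_{\overline\phi,\overline F}(n)\cdot\gamma_{\phi\restriction_H,F'}(n)$ is unsupported. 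If such an $F'$ existed, the whole Addition Theorem would follow from elementary counting, and neither the long combinatorial proof of \cite{DGSZ} in the torsion case nor the Algebraic Yuzvinski Formula (which the ``$\leq$'' inequality encodes, e.g.\ the superadditivity $m(fg)\le m(f)+m(g)$ is exactly the nontrivial half hiding there) would be needed. A secondary but real issue is your reduction to the two pure cases: splitting along $t(G)$, i.e.\ $h_{alg}(\phi)=h_{alg}(\phi\restriction_{t(G)})+h_{alg}(\phi_{G/t(G)})$, is itself a mixed instance of the Addition Theorem and cannot be obtained from Monotonicity (Fact \ref{properties}(b) only gives $h_{alg}(\phi)\geq\max$); in \cite{DG} this mixed case receives a separate argument.
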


Moreover, uniqueness is available for the algebraic entropy in the category of all abelian groups. As in the case of the Addition Theorem, also the Uniqueness Theorem was proved in general in \cite{DG}, while it was previously proved in \cite{DGSZ} for torsion abelian groups.

\begin{Theorem}[Uniqueness Theorem]\label{UT}\emph{\cite{DG}}
The algebraic entropy $$h_{alg}:\mathrm{Flow}_\mathbf{AbGrp}\to\R_+$$ is the unique function such that:
\begin{itemize}
\item[(a)] $h_{alg}$ is invariant under conjugation;
\item[(b)] $h_{alg}$ is continuous on direct limits;
\item[(c)] $h_{alg}$ satisfies the Addition Theorem;
\item[(d)] for $K$ a finite abelian group, $h_{alg}(\beta_K)=\log|K|$;
\item[(e)] $h_{alg}$ satisfies the Algebraic Yuzvinski Formula.
\end{itemize}
\end{Theorem}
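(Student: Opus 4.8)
The plan is to show that any function $h:\mathrm{Flow}_\mathbf{AbGrp}\to\R_+$ satisfying (a)--(e) must agree with $h_{alg}$ on every flow $(G,\phi)$, arguing by successive reduction to cases where properties (d) and (e) pin down the value directly. The strategy mirrors the structure already visible in the excerpt: the Addition Theorem (c) lets one cut a group along a $\phi$-invariant subgroup, continuity (b) reduces to finitely generated pieces, and then (d) and (e) handle the atomic cases. Invariance under conjugation (a) is used throughout to replace a flow by an isomorphic one without comment.

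First I would reduce to finitely generated groups. Any abelian group $G$ is the direct limit of its finitely generated subgroups; but these need not be $\phi$-invariant. The standard fix is to pass to the subgroups $G_F=\bigcup_{n\ge 0}\langle T_n(\phi,F)\rangle$ generated by full forward trajectories of finite sets $F$, which \emph{are} $\phi$-invariant and whose union is $G$; by (b) it suffices to compute $h$ on each $(G_F,\phi\restriction_{G_F})$. However $G_F$ need not be finitely generated, so this alone is not enough — one genuinely needs the Addition Theorem to go further. Next, using (c) with $H=t(G)$ the torsion subgroup (which is $\phi$-invariant), one splits into the torsion case and the torsion-free case $G/t(G)$. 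In the torsion-free finitely generated case $G\cong\Z^n$, Proposition~\ref{AA_} lets one pass to the divisible hull $\Q^n$, and then (e), the Algebraic Yuzvinski Formula, forces $h(\phi)=m(\phi)=h_{alg}(\phi)$. (Strictly, one should check that $h$ inherited on $\Q^n$ from $h$ on $\Z^n$ via continuity still satisfies the hypothesis of (e) — this is where one uses that (e) is \emph{postulated} for $h$ on all of $\mathbf{AbGrp}$, not derived.)

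The remaining, and main, case is that of torsion abelian groups, equivalently (for the purpose of (c)-induction) finite abelian groups and their direct limits. For a finite abelian group $G$ and $\phi\in\mathrm{End}(G)$ one wants to reduce $(G,\phi)$, via the Addition Theorem applied to a suitable $\phi$-invariant filtration, to shifts $\beta_K$ on $K^{(\N)}$ with $K$ finite, whose value is fixed by (d). The key structural input is that a flow $(G,\phi)$ with $G$ finite decomposes (after conjugation) so that its ``non-periodic part'' looks like a bounded piece of a Bernoulli shift: concretely, one isolates the largest $\phi$-invariant subgroup on which $\phi$ acts locally nilpotently (contributing $0$ by an argument like Lemma~\ref{locally}(b), which here must be re-derived from (a)--(c) rather than assumed) and a complementary part that embeds into a power of a Bernoulli shift as a cofinal invariant subgroup, whose entropy is read off from (d) together with the Weak Addition Theorem — which itself is a consequence of (c). General torsion groups then follow by (b).

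The hard part will be the torsion case: making precise the claim that every finite-group flow is built, via $\phi$-invariant extensions, out of locally nilpotent flows (entropy $0$) and pieces of Bernoulli shifts (entropy $\log|K|$), and then bootstrapping to all torsion groups. This is exactly the content of the original torsion uniqueness theorem of \cite{DGSZ}, and the cleanest route is to invoke that argument; the only genuinely new work relative to \cite{DGSZ} is the torsion-free/divisible reduction, which is short given Proposition~\ref{AA_} and hypothesis (e). One subtlety worth flagging: one must verify that properties (a)--(e) are not redundant in a way that trivializes the reduction — in particular that (e) is really needed (it is, since without it the torsion-free case is unconstrained) and that (d) cannot be dropped in favor of (e) (it cannot, since $\beta_K$ for finite $K$ is not captured by the Yuzvinski formula).
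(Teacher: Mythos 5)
The paper itself gives no proof of Theorem \ref{UT}: it is stated with a citation to \cite{DG}, and the surrounding text only records that the torsion case was settled earlier in \cite{DGSZ}. So there is no in-paper argument to compare yours against line by line; what can be said is that your outline follows the strategy of \cite{DG}: use (b) to reduce to finitely generated flows $G=G_{\phi,F}$, use (c) with the $\phi$-invariant subgroup $t(G)$ to split off the torsion part, settle the torsion case by the uniqueness theorem of \cite{DGSZ} (whose inputs are exactly (a)--(d)), and settle the torsion-free case by passing to the divisible hull and invoking (e). That is the right skeleton.

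Two points need repair. First, the torsion-free part of a finitely generated flow need not be $\Z^n$ (consider $\Z[1/2]$ with $x\mapsto x/2$, which is generated as a flow by $\{1\}$); it is only a torsion-free group of finite rank, so its divisible hull is $\Q^n$ and (e) still applies, but your phrase ``$G\cong\Z^n$'' should be replaced accordingly. More importantly, you cannot invoke Proposition \ref{AA_} to pass from $G$ to $D(G)$: that proposition is a statement about $h_{alg}$, whereas here you must prove $h(\phi)=h(\widetilde\phi)$ for the \emph{abstract} $h$ from the axioms alone. The correct move is to apply (c) to the $\widetilde\phi$-invariant subgroup $G\leq D(G)$, note that $D(G)/G$ is torsion so the already-established torsion case evaluates $h$ on the quotient as $h_{alg}$, and then compare with the Addition Theorem \ref{AT} for $h_{alg}$ itself together with (e) on $\Q^n$ to solve for $h(\phi)$. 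Your parenthetical worry about ``$h$ inherited on $\Q^n$ via continuity'' points at the wrong difficulty. Second, your description of the torsion case is off: any endomorphism of a \emph{finite} group has $\gamma_{\phi,F}(n)\leq|G|$ bounded, hence entropy $0$, and no Bernoulli shift sits inside a finite group; axiom (d) is needed for infinite torsion groups such as $K^{(\N)}$, and the actual reduction in \cite{DGSZ} goes through bounded $p$-groups and their structure theory, not through a decomposition of finite-group flows into ``locally nilpotent plus Bernoulli'' pieces. Since you explicitly defer to \cite{DGSZ} for this case, the deferral saves the argument, but the sketch you give of what that argument does is not an accurate substitute for it.
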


%In particular the Logarithmic Law follows from these five axioms (see \cite{DG} and \cite{Salce}).

\subsection{The growth of a finitely generated flow in $\mathbf{Grp}$}\label{Growth-sec}

In order to measure and classify the growth rate of maps $\N \to \N$, one need the relation $\preceq$ defined as follows. 
For $\gamma, \gamma': \N \to \N$ let $\gamma \preceq \gamma'$ if there exist $n_0,C\in\N_+$ such that $\gamma(n) \leq \gamma'(Cn)$ for every $n\geq n_0$. 
Moreover $\gamma\sim\gamma$  if $\gamma\preceq\gamma'$ and $\gamma'\preceq\gamma$ (then $\sim$ is an equivalence relation), and $\gamma\prec\gamma'$ if $\gamma\preceq\gamma'$ but $\gamma\not\sim\gamma'$.

For example, for every $\alpha, \beta\in\R_{\geq0}$, $n^\alpha\sim n^\beta$ if and only if $\alpha=\beta$; if $p(t)\in\Z[t]$ and $p(t)$ has degree $d\in\N$, then $p(n)\sim n^d$.
On the other hand, $a^n\sim b^n$ for every $a,b\in\R$ with $a,b>1$, so in particular all exponentials are equivalent with respect to $\sim$. 
%Therefore it is possible to give the following definitions; a

So a map $\gamma: \N \to \N$ is called:
\begin{itemize}
\item[(a)] \emph{polynomial} if $\gamma(n) \preceq n^d$ for some $d\in\N_+$;
\item[(b)] \emph{exponential} if $\gamma(n) \sim 2^n$;
\item[(c)] \emph{intermediate} if $\gamma(n)\succ n^d$ for every $d\in\N_+$ and $\gamma(n)\prec 2^n$.
\end{itemize}

Let $G$ be a group, $\phi:G\to G$ an endomorphism and $F$ a non-empty finite subset of $G$.
Consider the function, already mentioned in \eqref{gamma}, 
$$
\gamma_{\phi,F}:\N_+\to\N_+\ \text{defined by}\ \gamma_{\phi,F}(n)=|T_n(\phi,F)|\ \text{for every}\ n\in\N_+.
$$
Since 
$$
|F|\leq\gamma_{\phi,F}(n)\leq|F|^n\mbox{ for every }n\in\N_+,
$$
the growth of $\gamma_{\phi,F}$ is always at most exponential; moreover, $H_{alg}(\phi,F)\leq \log |F|$.
So, following \cite{DG0} and \cite{DG-islam}, we say that $\phi$ has \emph{polynomial} (respectively, \emph{exponential}, \emph{intermediate}) \emph{growth at $F$} if $\gamma_{\phi,F}$ is polynomial (respectively, exponential, intermediate).

\medskip
Before proceeding further, let us make an important point here. All properties considered above concern practically the $\phi$-invariant 
subgroup $G_{\phi,F}$ of $G$ generated by the trajectory $T(\phi, F) = \bigcup_{n\in\N_+} T_n(\phi, F)$ and the restriction $\phi\restriction_{G_{\phi,F}}$.

\begin{Definition}
We say that the flow $(G, \phi)$ in $\mathbf{Grp}$ is \emph{finitely generated} if $G = G_{\phi,F}$ for some finite subset $F$ of $G$.
\end{Definition}

Hence, all properties listed above concern finitely generated flows in $\mathbf{Grp}$. 
We conjecture the following, knowing that it holds true when $G$ is abelian or when $\phi=\id_G$: if the flow $(G,\phi)$ is finitely generated, and if $G = G_{\phi,F}$  and $G = G_{\phi,F'}$ for some finite subsets $F$ and $F'$ of $G$, then 
$\gamma_{\phi,F}$ and $\gamma_{\phi,F'}$ have the same type of growth.
In this case the growth of a finitely generated flow $G_{\phi,F}$ would not depend on the specific finite set of generators $F$ (so $F$ can always be taken symmetric). In particular, one could speak of growth of a finitely generated flow without any reference to a specific finite set of generators. Nevertheless, one can give in general the following

\begin{Definition}
Let $(G,\phi)$ be a finitely generated flow in $\mathbf{Grp}$.
We say that $(G,\phi)$ has
\begin{itemize}
\item[(a)] \emph{polynomial growth} if $\gamma_{\phi,F}$ is polynomial
for every finite subset $F$ of $G$;
\item[(b)] \emph{exponential growth} if there exists a finite subset
$F$ of $G$ such that $\gamma_{\phi,F}$ is exponential;
\item[(c)] \emph{intermediate growth} otherwise.
\end{itemize}
We denote by $\mathrm{Pol}$ and $\mathrm{Exp}$ the classes of finitely generated flows in $\mathbf{Grp}$ of polynomial and exponential growth respectively. Moreover, $\mathcal M=\mathrm{Pol}\cup\mathrm{Exp}$ is the class of finitely generated flows of non-intermediate growth. 
\end{Definition}

This notion of growth generalizes the classical one of growth of a finitely generated group given independently by Schwarzc \cite{Sch} and Milnor \cite{M1}.
Indeed, if $G$ is a finitely generated group and $X$ is a finite symmetric set of generators of $G$, then $\gamma_X=\gamma_{\id_G,X}$ is the classical \emph{growth function} of $G$ with respect to $X$. For a connection of the terminology coming from the theory of algebraic entropy and the classical one, note that for $n\in\N_+$ we have $T_n(\id_G,X)=\{g\in G:\ell_X(g)\leq n\}$, where $\ell_X(g)$ 
 is the length of the shortest word $w$ in the alphabet $X$ such that $w=g$ (see \S \ref{NewSec1} (c)). Since $\ell_X$ is a norm on $G$, $T_n(\id_G,X)$ is the ball of radius $n$ centered at $1$ and $\gamma_X(n)$ is the cardinality of this ball.
%It is easy to see that for two finite generating sets $X$ and $X'$ of a group $G$, one has the equivalence 
%$\gamma_X\sim\gamma_{X'},$ which permits to define $G$ to have:
%\begin{itemize}
%\item[(a)] \emph{polynomial growth} if $\gamma_X$ is polynomial;
%\item[(b)] \emph{exponential growth} if $\gamma_X$ is exponential;
%\item[(c)] \emph{intermediate growth} if $\gamma_X$ is intermediate.
%\end{itemize}
%In other words, $G$ has polynomial (respectively, exponential or intermediate) growth if and only if $id_G$ has polynomial (respectively, exponential or intermediate) growth.
%\NBD % at $X\cup\{1\}$.

\medskip
Milnor \cite{M3} proposed the following problem on the growth of finitely generated groups.

\begin{problem}[Milnor Problem]\label{Milnor-pb}{\cite{M3}}
Let $G$ be a finitely generated group and $X$ a finite set of generators of $G$.
\begin{itemize}
\item[(i)] Is the growth function $\gamma_X$ necessarily equivalent either to a power of $n$ or to the exponential function $2^n$?
\item[(ii)] In particular, is the {growth exponent} $\delta_G=\limsup_{n\to \infty}\frac{\log\gamma_X(n)}{\log n}$ either a well defined integer or infinity?
For which groups is $\delta_G$ finite?
\end{itemize}
\end{problem}

Part (i) of Problem \ref{Milnor-pb} was solved negatively by Grigorchuk in \cite{Gri1,Gri2,Gri3,Gri4}, where he constructed his famous examples of finitely generated groups $\mathbb G$ with intermediate growth. 
For part (ii) Milnor conjectured that $\delta_G$ is finite if and only if $G$ is virtually nilpotent (i.e., $G$ contains a nilpotent finite-index subgroup). The same conjecture was formulated by Wolf \cite{Wolf} (who proved that a nilpotent finitely generated group has polynomial growth) and Bass \cite{Bass}.  Gromov \cite{Gro} confirmed Milnor's conjecture: 

\begin{Theorem}[Gromov Theorem]\label{GT}\emph{\cite{Gro}}
A finitely generated group $G$ has polynomial growth if and only if $G$ is virtually nilpotent.
\end{Theorem}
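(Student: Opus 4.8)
The proof splits into two implications of vastly different depth, and I would treat them separately.

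\textit{Virtually nilpotent $\Rightarrow$ polynomial growth.} First I would record two standard facts about the growth function $\gamma_X$: its $\sim$-type is independent of the chosen finite symmetric generating set, and it is a commensurability invariant (passing to a finite-index subgroup does not change the $\sim$-type). Granting these, it suffices to bound the growth of a finitely generated \emph{nilpotent} group $N$. I would argue by induction on the nilpotency class, using that along the lower central series $N=\gamma_1(N)\supseteq\gamma_2(N)\supseteq\cdots$ each factor $\gamma_i(N)/\gamma_{i+1}(N)$ is a finitely generated abelian group; a word of length $n$ in a fixed generating set of $N$ can be rewritten so that its ``contribution'' to the $i$-th factor has length $O(n^i)$, which gives $\gamma_X(n)\preceq n^d$ with $d=\sum_{i\geq1}i\cdot\mathrm{rk}(\gamma_i(N)/\gamma_{i+1}(N))$. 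This is Wolf's theorem \cite{Wolf} (the precise value of $d$ being the Bass--Guivarc'h formula \cite{Bass}); for the present statement only the upper bound is needed.

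\textit{Polynomial growth $\Rightarrow$ virtually nilpotent.} Here I would follow Gromov \cite{Gro} and induct on the degree $d$ with $\gamma_X(n)\preceq n^d$. If $d=0$ the group is finite, hence virtually nilpotent. For the inductive step the crucial point is to produce a finite-index subgroup of $G$ admitting a surjection onto $\Z$. To this end one rescales the word metric on the Cayley graph of $G$ by the factors $1/m$ and passes to a Gromov--Hausdorff limit space $Y$: polynomial growth is exactly what makes this limit exist and forces $Y$ to be a proper, connected, locally connected, locally compact metric space of \emph{finite topological dimension}, on which $G$ acts by isometries with a dense orbit. By the solution of Hilbert's fifth problem (Gleason, Montgomery--Zippin, Yamabe), the isometry group of such a space is a Lie group $L$ with finitely many connected components; since $Y$ is unbounded ($G$ being infinite) the resulting homomorphism $G\to L$ has infinite image. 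A finitely generated infinite subgroup of a Lie group contains no nonabelian free subgroup --- a free subgroup would have exponential growth, inherited by the subgroup and contradicting $\gamma_X(n)\preceq n^d$ --- so by the Tits alternative it is virtually solvable, and a finitely generated infinite virtually solvable group always has a finite-index subgroup with infinite abelianization. Pulling this back yields a finite-index subgroup $G_0\leq G$ and a surjection $\pi\colon G_0\twoheadrightarrow\Z$.

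To close the induction, set $H=\ker\pi$. As $[G:G_0]<\infty$, $G_0$ still has polynomial growth of degree $\leq d$, and a standard argument (essentially Milnor's, exploiting the polynomial bound on balls of $G_0$) shows that $H$ is finitely generated with polynomial growth of degree $\leq d-1$. By the inductive hypothesis $H$ is virtually nilpotent, so $G_0$ is (virtually nilpotent)-by-$\Z$, hence virtually polycyclic; and a polycyclic group of polynomial growth is virtually nilpotent \cite{Wolf}. Therefore $G_0$, and with it $G$, is virtually nilpotent. The main obstacle --- the genuinely hard heart of Gromov's proof --- is the construction and structural analysis of the limit space $Y$ together with the appeal to Hilbert's fifth problem; by comparison, the manipulations of growth functions and the structure theory of solvable and polycyclic groups are routine.
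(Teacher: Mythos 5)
The paper does not prove this statement: Theorem~\ref{GT} is quoted from \cite{Gro} as one of the deep external inputs that the survey builds on, so there is no in-paper argument to compare yours against. Judged on its own terms, your outline reproduces the standard architecture of Gromov's proof correctly: Wolf's theorem \cite{Wolf} and the Bass--Guivarc'h degree formula \cite{Bass} for the direction ``virtually nilpotent $\Rightarrow$ polynomial growth''; induction on the degree $d$, Milnor's lemma that the kernel of a surjection $G_0\twoheadrightarrow\Z$ is finitely generated of degree $\leq d-1$, and the Milnor--Wolf theorem on polycyclic groups for the endgame. All of that is accurately placed.

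There is, however, one point where the outline asserts something that does not hold as written, and it sits exactly at the heart of the hard direction. You say that $G$ ``acts by isometries with a dense orbit'' on the limit space $Y$ and that this yields a homomorphism $G\to L=\mathrm{Isom}(Y)$ with infinite image because $Y$ is unbounded. But the naive limit of the left-translation action of a fixed $g\in G$ on $(G,\tfrac{1}{m_j}d,e)$ is the \emph{identity} of $Y$, since $g$ displaces the basepoint by $d(e,g)/m_j\to 0$; what the limit construction actually gives is that $\mathrm{Isom}(Y)$ acts transitively on $Y$ (limits of translations by elements $g_j$ with $d(e,g_j)\asymp m_j$), not that $G$ itself acts nontrivially. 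Producing a finite-index subgroup of $G$ with a homomorphism to a Lie group having infinite image is a separate and genuinely delicate argument in \cite{Gro}: one must analyze the Lie group $L$ furnished by Montgomery--Zippin, split into cases according to whether the relevant quotients are compact (where Jordan's theorem is used to get an infinite abelianization) or not (where the Tits alternative and the polynomial growth hypothesis enter), and only then does the virtual surjection onto $\Z$ emerge. Your closing sentence acknowledges that this is the hard part, but the intermediate claims as stated would not survive being made precise; the gap is not merely one of compression.
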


The following two problems on the growth of finitely generated flows of groups are inspired by Milnor Problem.

\begin{problem}
Describe the permanence properties of the class $\mathcal M$.
\end{problem}

Some stability properties of the class $\mathcal M$ are easy to check. For example, stability under taking finite direct products is obviously available, while stability under taking subflows (i.e., invariant subgroups) and factors fails even in the classical case of identical flows. Indeed, Grigorchuk's group $\mathbb G$ is a quotient of a finitely generated free group $F$, that has exponential growth; so $(F,\id_F) \in \mathcal M$, while $(\mathbb G, \id_{\mathbb G})\not \in\mathcal M$. Furthermore, letting $G = \mathbb G \times F$, one has $(G,\id_G) \in \mathcal M$, while $(\mathbb G, \id_{\mathbb G})\not \in\mathcal M$, so $\mathcal M$ is not stable even under taking direct summands.
On the other hand, stability under taking powers is available since $(G,\phi) \in \mathcal M$ if and only if $(G,\phi^n) \in \mathcal M$ for $n\in\N_+$.

\begin{problem}\label{Ques4}
\begin{itemize}
\item[(i)] Describe the finitely generated groups $G$ such that $(G,\phi)\in\mathcal M$  for every endomorphism $\phi:G\to G$.
\item[(ii)] Does there exist a finitely generated group $G$ such that $(G,\id_G)\in\mathcal M$ but $(G,\phi)\not\in\mathcal M$ for some endomorphism $\phi:G\to G$?
\end{itemize}
\end{problem}

In item (i) of the above problem we are asking to describe all finitely generated groups $G$ of non-intermediate growth such that $(G,\phi)$ has still non-intermediate growth for every endomorphism $\phi:G\to G$. On the other hand, in item (ii) we ask to find a finitely generated group $G$ of non-intermediate growth that admits an endomorphism $\phi:G\to G$ of intermediate growth.

\medskip
The basic relation between the growth and the algebraic entropy is given by Proposition \ref{exp} below. For a finitely generated group $G$, 
an endomorphism $\phi$ of $G$ and a pair $X$ and $X'$ of finite generators of $G$, one has $\gamma_{\phi,X}\sim\gamma_{\phi,X'}$.
Nevertheless, $H_{alg}(\phi,X)\neq H_{alg}(\phi,X')$ may occur; in this case $(G,\phi)$ has necessarily exponential growth. We give two examples to this effect:

\begin{Example}\label{exaAugust}
\begin{itemize}
\item[(a)] {\cite{DG-islam}} Let $G$ be the free group with two generators $a$ and $b$; then $X=\{a^{\pm 1},b^{\pm 1}\}$ gives $H_{alg}(\id_G,X)=\log 3$ while for $X'=\{a^{\pm 1},b^{\pm 1},(ab)^{\pm 1}\}$ we have $H_{alg}(\id_G,X')=\log 4$.
\item[(b)] Let $G = \Z$ and $\f: \Z \to \Z$ defined by $\f(x) = mx$ for every $x\in \Z$ and with $m>3$. Let also $X= \{0,\pm 1\}$ and $X'= \{0,\pm 1, \ldots \pm m\}$. Then $H_{alg}(\f,X) \leq \log |X| =\log 3$, while $H_{alg}(\f,X')= h_{alg}(\f)= \log m$. %and hence $H_{alg}(\f,X)<H_{alg}(\f,X')$.
\end{itemize}
\end{Example}

\begin{Proposition}\label{exp}\emph{\cite{DG-islam}}
Let $(G,\phi)$ be a finitely generated flow in {\bf Grp}.
\begin{itemize}
\item[(a)]Then $h_{alg}(\phi)>0$ if and only if $(G,\phi)$ has exponential growth.
\item[(b)]If $(G,\phi)$ has polynomial growth, then $h_{alg}(\phi)=0$.
\end{itemize}
\end{Proposition}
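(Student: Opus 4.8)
The plan is to deduce both statements directly from the existence of the limit $H_{alg}(\phi,F)=h_\Se(\mathcal H(\phi),F)=\lim_{n\to\infty}\frac{\log\gamma_{\phi,F}(n)}{n}$ (Theorem \ref{limit}), from the defining formula $h_{alg}(\phi)=\sup_F H_{alg}(\phi,F)$, and from two elementary facts about $\preceq$ recorded before the Definition of growth: all exponentials are $\sim$-equivalent (so $b^n\sim 2^n$ for every real $b>1$), and $\gamma_{\phi,F}(n)\le|F|^n$ forces $\gamma_{\phi,F}\preceq 2^n$ automatically (take $C=\lceil\log_2|F|\rceil$, since $|F|^n\le 2^{Cn}$).

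For the ``if'' direction of (a) I would assume $(G,\phi)$ has exponential growth and fix a finite $F\subseteq G$ with $\gamma_{\phi,F}(n)\sim 2^n$; in particular $2^n\preceq\gamma_{\phi,F}$, so there are $C,n_0\in\N_+$ with $2^n\le\gamma_{\phi,F}(Cn)$ for $n\ge n_0$. Applying $\log$, dividing by $Cn$, and letting $n\to\infty$ gives $H_{alg}(\phi,F)\ge\frac{\log 2}{C}>0$, hence $h_{alg}(\phi)\ge H_{alg}(\phi,F)>0$. For the ``only if'' direction I would use that $h_{alg}(\phi)>0$ yields a finite $F\subseteq G$ with $c:=H_{alg}(\phi,F)>0$ (a supremum of nonnegative reals is positive only if some term is); then $\log\gamma_{\phi,F}(n)>\frac{c}{2}n$ for all large $n$, so $\gamma_{\phi,F}\succeq b^n$ for a suitable real $b>1$, and therefore $\gamma_{\phi,F}\succeq 2^n$ since $b^n\sim 2^n$. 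Together with the automatic bound $\gamma_{\phi,F}\preceq 2^n$ this yields $\gamma_{\phi,F}(n)\sim 2^n$, i.e. $(G,\phi)$ has exponential growth by definition.

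For (b), if $(G,\phi)$ has polynomial growth then every finite $F\subseteq G$ satisfies $\gamma_{\phi,F}\preceq n^d$ for some $d\in\N_+$, i.e. $\gamma_{\phi,F}(n)\le(Cn)^d$ for large $n$; then $\frac{\log\gamma_{\phi,F}(n)}{n}\le\frac{d(\log C+\log n)}{n}\to 0$, so $H_{alg}(\phi,F)=0$, and taking the supremum over $F$ gives $h_{alg}(\phi)=0$. Alternatively, (b) is immediate from (a): a polynomial function $\gamma$ cannot satisfy $2^n\preceq\gamma$ (else $2^n\preceq n^d$), so polynomial growth rules out exponential growth, and the contrapositive of the ``only if'' part of (a) forces $h_{alg}(\phi)=0$.

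There is no serious obstacle here: the computations are routine and the finite-generation hypothesis is used only to make the notions of polynomial and exponential growth of $(G,\phi)$ meaningful. The one point that needs care is the asymmetry of $\preceq$: to obtain $\gamma_{\phi,F}\sim 2^n$ one must verify both $\gamma_{\phi,F}\succeq 2^n$ and $\gamma_{\phi,F}\preceq 2^n$, the latter coming for free from $\gamma_{\phi,F}(n)\le|F|^n$, and one should pass through the fact that $b^n\sim 2^n$ for an arbitrary base $b>1$ rather than attempt to identify the precise exponential rate of $\gamma_{\phi,F}$, which $H_{alg}(\phi,F)$ does not determine in a $\preceq$-invariant way.
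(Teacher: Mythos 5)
Your argument is correct and complete: both directions of (a) follow from the existence of the limit defining $H_{alg}(\phi,F)$ together with the trivial bound $\gamma_{\phi,F}(n)\le|F|^n$ and the equivalence of all exponential bases under $\sim$, and (b) follows either directly or from (a). The paper itself only cites this proposition from \cite{DG-islam} without reproducing a proof, but your reasoning is exactly the standard argument, and you correctly isolate the one delicate point, namely that exponential growth is the conjunction $2^n\preceq\gamma_{\phi,F}$ and $\gamma_{\phi,F}\preceq 2^n$, the second of which is automatic.
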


In general the converse implication in item (b) is not true even for the identity. Indeed, 
if $(G,\phi)$ has intermediate growth, then $h_{alg}(\phi)=0$ by item (a). So for Grigorchuk's group $\mathbb G$, the flow $(\mathbb G,\id_\mathbb G)$ has intermediate growth yet $h_{alg}(\id_\mathbb G)=0$.
This motivates the following

\begin{Definition}\label{MPara}
Let $\mathcal G$ be a class of groups and $\Phi$ be a class of morphisms. We say that the pair $(\mathcal G, \Phi)$ satisfies Milnor Paradigm (briefly, MP) if no finitely generated flow $(G,\phi)$ with $G\in\mathcal G$ and $\phi\in\Phi$ can have intermediate growth. 
\end{Definition}

In terms of the class $\mathcal M$, $$(\mathcal G, \Phi)\  \text{satisfies MP if and only if }\ (\mathcal G, \Phi)\in \mathcal M\ (\forall G\in\mathcal G)(\forall \phi\in\Phi) .$$
Equivalently, $(\mathcal G, \Phi)$ satisfies MP when $h_{alg}(\phi)=0$ always implies that $(G,\phi)$ has polynomial growth for finitely generated flows $(G,\phi)$ with $G\in\mathcal G$ and $\phi\in\Phi$. 

In these terms Milnor Problem \ref{Milnor-pb} (i) is asking whether the pair $(\mathbf{Grp},\mathcal{I}d)$ satisfies MP, where $\mathcal I d$ is the class of all identical endomorphisms. So we give the following general open problem.
 
\begin{problem}\label{PB0}
\begin{itemize}
  \item[(i)] Find pairs $(\mathcal G,\Phi)$ satisfying MP.
  \item[(ii)] For a given $\Phi$ determine the properties of the largest class  $\mathcal G_\Phi$ such that $(\mathcal G_\Phi, \Phi)$ satisfies MP.
  \item[(iii)] For a given $\mathcal G$ determine the properties of the largest class $\Phi_{\mathcal G}$ such that $(\mathcal G, \Phi_{\mathcal G})$ satisfies MP. 
  \item[(iv)] Study the Galois correspondence between classes of groups  $\mathcal G$ and classes of endomorphisms $\Phi$ determined by MP. 
\end{itemize}
\end{problem}

According to the definitions, the class $\mathcal G_{\mathcal{I} d}$ coincides with the class of finitely generated groups of non-intermediate growth. 

\medskip
The following result solves Problem \ref{PB0} (iii) for $\mathcal G=\mathbf{AbGrp}$, showing that $\Phi_\mathbf{AbGrp}$ coincides with the class $\mathcal E$ of all endomorphisms. 

\begin{Theorem}[Dichotomy Theorem]\emph{\cite{DG0}}\label{DT} 
There exist no finitely generated flows of intermediate growth in $\mathbf{AbGrp}$.
\end{Theorem}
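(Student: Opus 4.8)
The plan is to reduce the statement to the structure theory of finitely generated abelian flows and then to invoke the Algebraic Yuzvinski Formula together with the Addition Theorem. First I would observe that a finitely generated flow $(G,\phi)$ in $\mathbf{AbGrp}$, say with $G = G_{\phi,F}$ for a finite symmetric $F$ containing $0$, has $G$ a countable abelian group which is finitely generated \emph{as a $\Z[t]$-module} (letting $t$ act as $\phi$); indeed the trajectory $T(\phi,F)$ generates $G$, so $F$ generates $G$ over $\Z[t]$. Thus $G_\phi := G$ is a finitely generated module over the Noetherian ring $\Z[t]$, and $\phi$ is multiplication by $t$. This is the structural key: it lets us use a filtration argument.

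Next I would set up a dévissage. Since $\Z[t]$ is Noetherian, the $\Z[t]$-module $G_\phi$ admits a finite filtration $0 = G_0 \subseteq G_1 \subseteq \cdots \subseteq G_k = G_\phi$ with each quotient $G_{i+1}/G_i$ isomorphic to $\Z[t]/\mathfrak p_i$ for a prime ideal $\mathfrak p_i$ of $\Z[t]$. Each $G_i$ is $\phi$-invariant, and by the Addition Theorem (Theorem \ref{AT}) one has $h_{alg}(\phi) = \sum_i h_{alg}(\overline\phi_i)$ where $\overline\phi_i$ is multiplication by $t$ on $\Z[t]/\mathfrak p_i$. More importantly for growth: the growth function $\gamma_{\phi,F}$ is, up to the equivalence $\sim$, controlled by the growth functions of these cyclic subquotients — the ball of radius $n$ in $G_\phi$ surjects onto, and is (coarsely) bounded by a product of, the corresponding balls in the $\Z[t]/\mathfrak p_i$. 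So it suffices to show that each cyclic flow $(\Z[t]/\mathfrak p,\, t\cdot)$ has non-intermediate growth, since $\mathcal M$ is closed under finite products and (at the level of $\sim$) the total growth is squeezed between that of a subflow and a product of subquotients.

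Then I would analyze the cyclic case $R = \Z[t]/\mathfrak p$ by cases on $\mathfrak p$. If $\mathfrak p$ contains a rational prime $p$, then $R$ is a finitely generated $\mathbb F_p[t]$-module, hence either finite (giving bounded, hence polynomial, growth) or a finitely generated torsion-free $\mathbb F_p[t]$-module of positive rank, in which case $t$ acting on it is conjugate to a shift-like operator and one checks directly that $\gamma$ is exponential (a free $\mathbb F_p[t]$-module of rank $1$ already gives $\gamma_{\phi,F}(n) \sim p^n$, and the torsion part only contributes a bounded factor). If $\mathfrak p \cap \Z = 0$, then $R$ embeds in $\Q[t]/(g(t))$ for the irreducible $g$ generating $\mathfrak p\Q[t]$, so the divisible hull is a finite-dimensional $\Q$-vector space; by Proposition \ref{AA_} the entropy is unchanged by passing to the divisible hull, and the Algebraic Yuzvinski Formula (Theorem \ref{AYF}) gives $h_{alg}(\overline\phi) = m(g)$. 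If $m(g) > 0$ then by Proposition \ref{exp}(a) the flow has exponential growth; if $m(g) = 0$ then all roots of $g$ lie on or inside the unit circle, and one shows directly (e.g., by bounding $\ell_F(\phi^j(x))$ polynomially in $j$, using that $\phi$ satisfies a monic integer recurrence with roots of modulus $\le 1$, so iterates grow polynomially) that the growth is polynomial. In every case the subquotient flow lies in $\mathcal M$, and assembling the filtration yields that $(G,\phi)$ itself is in $\mathcal M$.

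The main obstacle I expect is the $m(g)=0$ (and the $\mathfrak p \supseteq (p)$-with-rank-zero-type) borderline: showing \emph{polynomial} growth rather than merely zero entropy. Proposition \ref{exp}(a) only gives $h_{alg}(\phi)=0$ there, and $\gamma$ could a priori be intermediate; ruling this out requires a genuine quantitative estimate on $|T_n(\phi,F)|$ — essentially, when $\phi$ has all eigenvalues on the unit circle (roots of unity, since $g\in\Z[t]$ monic with $m(g)=0$ forces $g$ cyclotomic by Kronecker's theorem), the iterates $\phi^j$ have polynomially bounded "size", so $T_n(\phi,F)$ sits inside a ball of polynomial radius and $\gamma_{\phi,F}(n)$ is polynomial. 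Making the squeezing between a subflow and a product of subquotients fully rigorous at the level of the equivalence $\sim$ (so that "each piece in $\mathcal M$" really forces "the whole in $\mathcal M$") is the other point needing care, but it is routine given the definitions of $\preceq$ and $\mathcal M$.
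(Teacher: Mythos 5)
The survey does not actually reproduce a proof of Theorem \ref{DT}; it only cites \cite{DG0}. Your architecture --- regard $G=G_{\phi,F}$ as a finitely generated module over the Noetherian ring $\Z[t]$ with $t$ acting as $\phi$, take a prime filtration, settle each cyclic piece $\Z[t]/\mathfrak p$ via the Algebraic Yuzvinski Formula and Kronecker's theorem, and reassemble --- is essentially the right strategy, and your case analysis of the cyclic pieces is correct. But the step you dismiss as ``routine'' contains a concretely false claim: the coarse upper bound of $\gamma_{\phi,F}$ by a product of the growth functions of the subquotients. Take $G=\Z^2$, $\phi=\left(\begin{smallmatrix}1&1\\0&1\end{smallmatrix}\right)$ and $H=\Z e_1$. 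Both $(H,\phi\restriction_H)$ and $(G/H,\overline\phi)$ are $(\Z,\id)$, with growth $\sim n$, so the proposed product bound gives $\preceq n^2$; but for $F=\{0,\pm e_1,\pm e_2\}$ one has $\gamma_{\phi,F}(n)\sim n^3$, since $\phi^j(e_2)=(j,1)$, the attainable second coordinates of $\sum_j \phi^{j}(f_j)$ range over $\sim n$ values, and for a positive proportion of them the attainable first coordinates fill an interval of length of order $n^2$. Equivalently, $H\cap\bigl(T_n(\phi,F)-T_n(\phi,F)\bigr)$ is \emph{not} contained in $T_{Cn}(\phi\restriction_H,F_H)$ for any constant $C$, so the fibered counting over the quotient breaks down; since $n^3\not\preceq n^2$, this is not a matter of adjusting constants.

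The gap is repairable because you only need the qualitative implication ``all prime subquotients polynomial $\Rightarrow$ $(G,\phi)$ polynomial'', and that is better proved directly than by squeezing. In the all-polynomial case your own analysis shows that every $\mathfrak p_i$ in the filtration contains a monic polynomial (a power of $t$ times a product of cyclotomics when $\mathfrak p_i\cap\Z=0$, a monic lift of the generator mod $p$ when $\mathfrak p_i$ is maximal; the primes $(0)$ and $(p)$ are excluded since they give Bernoulli shifts, hence exponential growth). Hence $\mathrm{Ann}_{\Z[t]}(G)\supseteq \prod_i\mathfrak p_i$ contains a monic $P(t)$, so $G$ is finitely generated over $\Z[t]/(P)$ and therefore over $\Z$, and the eigenvalues of $\phi$ on $G\otimes\Q$ are roots of unity or $0$. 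Then $\phi^j$ has polynomially bounded matrix norm, $T_n(\phi,F)$ lies in a ball of polynomially bounded radius in $\Z^m\oplus(\text{finite})$, and polynomial growth of $(G,\phi)$ follows with no reference to a product bound over the filtration. The exponential direction is unproblematic: an exponential subquotient $G_{i+1}/G_i$ is a subflow of the quotient flow $G/G_i$, and the lower bounds $\gamma_{G}\succeq\gamma_{G/G_i}\succeq\gamma_{G_{i+1}/G_i}$ do hold, provided you invoke (as you should, explicitly) the fact that in the abelian case the growth type of a finitely generated flow does not depend on the chosen finite set of generators. With these corrections your argument closes; the appeal to the Addition Theorem, on the other hand, is not needed anywhere.
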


Actually, one can extend the validity of this theorem to nilpotent groups. This leaves open the following particular case of Problem \ref{PB0}. We shall see in Theorem \ref{osin} that the answer to (i) is positive when $\phi=\id_G$. 

\begin{question}\label{Ques1}
Let $(G,\phi)$ be a finitely generated flow in $\mathbf{Grp}$.
%Let $G$ be a solvable group. Does there exist an endomorphism $\phi$ of $G$ such that $(G,\phi)$ is a finitely generated flow of intermediate growth? 
\begin{itemize}
\item[(i)] If $G$ is solvable, does $(G,\phi)\in\mathcal M$?
\item[(ii)] If $G$ is a free group, does $(G,\phi)\in\mathcal M$?
\end{itemize}
\end{question}
%such an endomorphism cannot be identical (see Theorem \ref{osin}). 

%Or, in counter-positive form: does there exist a group $G$ and an endomorphism $\phi:G\to G$ without non-trivial quasi-periodic points such that the flow $(G,\phi)$ is of intermediate growth?

%\begin{question}\label{Ques3}
%Let $G$ be a finitely generated free group. Does $G$ admit an endomorphism of intermediate growth  ? 
%\end{question}

We state now explicitly a particular case of Problem \ref{PB0}, inspired by the fact that the right Bernoulli shifts have no non-trivial quasi-periodic points and they have uniform exponential growth (see Example \ref{bern}). In \cite{DG0} group endomorphisms $\phi:G\to G$ without non-trivial quasi-periodic points are called algebraically ergodic for their connection (in the abelian case and through Pontryagin duality) with ergodic transformations of compact groups.

\begin{question}\label{Ques2}
Let $\Phi_0$ be the class of endomorphisms without non-trivial quasi-periodic points.  Is it true that the pair $(\mathbf{Grp},\Phi_0)$ satisfies MP? 
\end{question}

%Another question related to Problem \ref{PB0} is the next one.

%\begin{problem}\label{Ques4}
%Describe the finitely generated groups $G$, such that no endomorphism of $G$ has intermediate growth (i.e, $(G,\phi)\in\mathcal M$  for every endomorphism $\phi:G\to G$). 
%\end{problem}

%This problem can be considered as a particular case of Problem \ref{PB0} (ii), when $\Phi$ is the class of all endomorphisms. 

%%%%%%%%%%%%%%%%%
\medskip
For a finitely generated group $G$, the \emph{uniform exponential growth rate} of $G$ is defined as
$$
\lambda(G)=\inf\{H_{alg}(\id_G,X):X\ \text{finite set of generators of}\ G\}
$$
 (see for instance \cite{dlH-ue}). Moreover, $G$ has \emph{uniform exponential growth} if $\lambda(G)>0$.
Gromov \cite{GroLP} asked whether every finitely generated group of exponential growth is also of uniform exponential growth. This problem was recently solved by Wilson \cite{Wilson} in the negative.

Since the algebraic entropy of a finitely generated flow $(G,\phi)$ in $\mathbf{Grp}$  can be computed as
$$
h_{alg}(\phi)=\sup\{H_{alg}(\phi,F): F\ \text{finite subset of $G$ such that $G=G_{\phi,F}$}\},
$$
one can give the following counterpart of the uniform exponential growth rate for flows: 

\begin{Definition}
For $(G,\phi)$ be a finitely generated flow in $\mathbf{Grp}$ let
$$
\lambda(G,\phi)=\inf\{H_{alg}(\phi,F): F\ \text{finite subset of $G$ such that $G=G_{\phi,F}$} \}.
$$
The flow $(G,\phi)$ is said to have \emph{uniformly exponential growth} if $\lambda(G,\phi)>0$. 

Let $\mathrm{Exp}_\mathrm u$ be the subclass of $\mathrm{Exp}$ of all finitely generated flows in $\mathbf{Grp}$ of uniform exponential growth.
\end{Definition}

Clearly $\lambda(G,\phi)\leq h_{alg}(\phi)$, so one has the obvious implication
%\begin{equation}\label{osin-eq}
%(G,\phi)\ \text{has polynomial growth}\ \Rightarrow\ 
\begin{equation}\label{GP}
h_{alg}(\phi)=0\ \Rightarrow\ \lambda(G,\phi)=0.
\end{equation}
%Looking at these two implications one can consider two problems, the first asking for which $(G,\phi)$  the first implication is an equivalence, and the same for the second implication with respect to the second problem: 

To formulate the counterpart of Gromov's problem on uniformly exponential growth it is worth to isolate also the class 
$\mathcal W$ of the finitely generated flows in $\mathbf{Grp}$ of exponential but not uniformly exponential growth
 (i.e., $\mathcal W=\mathrm{Exp}\setminus \mathrm{Exp}_\mathrm u$). Then $\mathcal W$ is the class of finitely generated flows $(G,\phi)$ in $\mathbf{Grp}$ for which \eqref{GP} cannot be inverted, namely $h_{alg}(\phi)> 0=\lambda(G,\phi)$.  

\medskip
We start stating the following problem.

\begin{problem}
Describe the permanence properties of the classes $\mathrm{Exp}_\mathrm u$ and $\mathcal W$.
\end{problem}

It is easy to check that $\mathrm{Exp}_\mathrm u$ and $\mathcal W$ are stable under taking direct products. On the other hand, stability of $\mathrm{Exp}_\mathrm u$ under taking subflows (i.e., invariant subgroups) and factors fails even in the classical case of identical flows. Indeed, Wilson's group $\mathbb W$ is a quotient of a finitely generated free group $F$, that has uniform exponential growth (see \cite{dlH-ue}); so $(F,\id_F)\in \mathrm{Exp}_\mathrm u$, while $(\mathbb W, \id_{\mathbb W})\in\mathcal W$. Furthermore, letting $G = \mathbb W \times F$, one has $(G,\id_G)\in \mathrm{Exp}_\mathrm u$, while $(\mathbb W, \id_{\mathbb W})\in\mathcal W$, so $\mathrm{Exp}_\mathrm u$ is not stable even under taking direct summands.

\medskip
%Moreover one has the following obvious
%
%\begin{fact}
%Let $\mathcal G$ be a class of groups and $\Phi$ be a class of morphisms, and let $(G,\phi)$ be a finitely generated flow in $\mathbf{Grp}$ with $G\in\mathcal G$ and $\phi\in\Phi$. Then the following conditions are equivalent: 
%\begin{itemize}
%\item[(a)] \eqref{GP} becomes an equivalence;
%\item[(b)] if $(G,\phi)$ has exponential growth, then it has uniform exponential growth (i.e., $(G,\phi)\not \in \mathcal W$). 
%\end{itemize}
%\end{fact}

In the line of MP, introduced in Definition \ref{MPara}, we can formulate also the following

\begin{Definition}\label{GPara}
Let $\mathcal G$ be a class of groups and $\Phi$ be a class of morphisms. We say that the pair $(\mathcal G, \Phi)$ satisfies Gromovr Paradigm (briefly, MP), if every finitely generated flow $(G,\phi)$ with $G\in\mathcal G$ and $\phi\in\Phi$ of exponential growth has has uniform exponential growth. 
\end{Definition}

In terms of the class $\mathcal W$, 
$$
(\mathcal G, \Phi)\  \text{satisfies GP if and only if }\ (\mathcal G, \Phi)\not\in \mathcal M\ (\forall G\in\mathcal G)(\forall \phi\in\Phi) .
$$
In these terms, Gromov's problem on uniformly exponential growth asks whether the pair $(\mathbf{Grp}, \mathcal I d)$ satisfies GP. In analogy to the general Problem \ref{PB0}, one can consider the following obvious counterpart for GP: 

\begin{problem}\label{PB1}
\begin{itemize}
\item[(i)] Find pairs $(\mathcal G, \Phi)$ satisfying GP.
\item[(ii)] For a given $\Phi$ determine the properties of the largest class  $\mathcal G_\Phi$ such that $(\mathcal G_\Phi,\Phi)$ satisfies GP. 
\item[(iii)] For a given $\mathcal G$ determine the properties of the largest class $\Phi_{\mathcal G}$ such that $(\mathcal G,\Phi_\mathcal G)$ satisfies GP. 
\item[(iv)] Study the Galois correspondence between classes of groups  $\mathcal G$ and classes of endomorphisms $\Phi$
determined by GP. 
\end{itemize}
\end{problem}

We see now in item (a) of the next example a particular class of finitely generated flows for which $\lambda$ coincides with $h_{alg}$ and they are both positive, so in particular these flows are all in $\mathrm{Exp}_\mathrm u$. In item (b) we leave an open question related to Question \ref{Ques2}.

\begin{Example}\label{bern}
\begin{itemize}
\item[(a)] For a finite group $K$, consider the flow $(\bigoplus_\N K,\beta_K)$. We have seen in Example \ref{shift} that $h_{alg}(\beta_K)=\log|K|$. In this case we have $\lambda(\bigoplus_\N K,\beta_K)=\log|K|$, since a subset $F$ of $\bigoplus_\N K$ generating the flow $(\bigoplus_\N K,\beta_K)$ must contain the first copy $K_0$ of $K$ in $\bigoplus_\N K$, and $H_{alg}(\beta_K,K_0)=\log|K|$.
\item[(b)] Is it true that $\lambda(G,\f) = h_{alg}(\f) > 0$ for every finitely generated flow $(G,\phi)$ in $\mathbf{Grp}$ such that $\f \in \Phi_0$?
In other terms, we are asking whether all finitely generated flows $(G,\phi)$ in $\mathbf{Grp}$ with $\phi\in\Phi_0$ have uniform exponential growth (i.e., are contained in $\mathrm{Exp}_\mathrm u$).
\end{itemize}
\end{Example}

%\item[(a)] Find classes of groups $\mathcal G$ and of morphisms $\Phi$ such that if $G\in\mathcal G$ and $\phi\in\Phi$ then $$h_{alg}(\phi)=0\Rightarrow (G,\phi)\ \text{has polynomial growth}.$$ Equivalently, find classes of finitely generated flows $(G,\phi)$ such that $(G,\phi)$ cannot have intermediate growth.
%\item[(b)]  Find classes of groups $\mathcal G$ and classes of morphisms $\Phi$ such that if $G\in\mathcal G$ and $\phi\in\Phi$ then 
%$$\lambda(G,\phi)=0 \Rightarrow h_{alg}(\phi)=0,$$
%
%\begin{fact} \NBA Item (a) is equivalent to ask to find classes of finitely generated flows $(G,\phi)$ such that $(G,\phi)$ cannot have intermediate growth. On the other hand item (b) is equivalent to ask to find classes of finitely generated flows $(G,\phi)$ such that $(G,\phi)$ has uniform exponential growth precisely when $(G,\phi)$ has exponential growth. \end{fact}

\medskip
One can also consider the pairs $(\mathcal G, \Phi)$ satisfying the conjunction MP \& GP. 
%In other terms, noting that f
For any finitely generated flow $(G,\phi)$ in $\mathbf{Grp}$ one has 
\begin{equation}\label{osin-eq}
(G,\phi)\ \text{has polynomial growth}\ \ \buildrel{(1)}\over\Longrightarrow\ h_{alg}(\phi)=0\  \ \buildrel{(2)}\over\Longrightarrow\ \lambda(G,\phi)=0.
\end{equation}
The converse implication of (1) (respectively, (2)) holds for all $(G,\phi)$  with $G\in\mathcal G$ and $\phi\in\Phi$ 
precisely when the pair $(\mathcal G, \Phi)$ satisfies MP (respectively, GP).
 Therefore,   the pair $(\mathcal G, \Phi)$ satisfies the conjunction MP \& GP precisely when 
%one can study the pairs $(\mathcal G, \Phi)$ such that for every finitely generated flow $(G,\phi)$ with $G\in\mathcal G$ and $\phi\in\Phi$
 the three conditions in \eqref{osin-eq} are all equivalent (i.e., $\lambda(G,\phi)=0 \Rightarrow (G,\phi)\in \mathrm{Pol}$) for all finitely generated flows $(G,\phi)$  with $G\in\mathcal G$ and $\phi\in\Phi$. 

A large class of groups $\mathcal G$ such that $(\mathcal G, \mathcal I d)$ satisfies MP \& GP was found
by Osin \cite{O} who proved that a finitely generated solvable group $G$ of zero uniform exponential growth is virtually nilpotent, and recently this result was generalized in \cite{O1} to elementary amenable groups. Together with Gromov Theorem and Proposition \ref{exp}, this gives immediately the following

\begin{Theorem}\label{osin}
Let $G$ be a finitely generated %solvable (or, more generally, an
 elementary amenable group. The following conditions are equivalent:
\begin{itemize}
\item[(a)] $h_{alg}(\id_G)=0$;
\item[(b)] $\lambda(G)=0$;
\item[(c)] $G$ is virtually nilpotent;
\item[(d)] $G$ has polynomial growth.
\end{itemize}
\end{Theorem}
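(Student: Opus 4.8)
The plan is to prove the equivalences by closing the cycle of implications
$$(d)\ \Longrightarrow\ (a)\ \Longrightarrow\ (b)\ \Longrightarrow\ (c)\ \Longrightarrow\ (d),$$
so that the only genuinely non-trivial ingredients are the two external deep inputs — Gromov's Theorem \ref{GT} and Osin's theorems from \cite{O,O1} — while everything intrinsic to the present framework has already been prepared in Proposition \ref{exp} and in the chain \eqref{osin-eq}. Indeed, two of the three arrows in \eqref{osin-eq} are exactly the implications $(d)\Rightarrow(a)$ and $(a)\Rightarrow(b)$, so a good part of the bookkeeping is already done.

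First I would dispose of the easy arrows. For $(d)\Rightarrow(a)$ one applies Proposition \ref{exp}(b) to the identical flow $(G,\id_G)$, whose growth function $\gamma_{\id_G,X}=\gamma_X$ is the classical growth function of $G$ with respect to a finite symmetric generating set $X$; polynomial growth of $\gamma_X$ forces $h_{alg}(\id_G)=0$. For $(a)\Rightarrow(b)$ it suffices to observe that, directly from the definitions,
$$\lambda(G)=\inf\{H_{alg}(\id_G,X)\}\ \leq\ \sup\{H_{alg}(\id_G,X)\}=h_{alg}(\id_G),$$
both extrema taken over the finite sets of generators $X$ of $G$, so $h_{alg}(\id_G)=0$ yields $\lambda(G)=0$. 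For $(c)\Rightarrow(d)$ one uses only the elementary half of Gromov's Theorem \ref{GT} (the one going back to Wolf \cite{Wolf} and Bass \cite{Bass}): a virtually nilpotent finitely generated group has polynomial growth.

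The heart of the statement is the implication $(b)\Rightarrow(c)$, and this is where I expect the real obstacle to lie, since it cannot be obtained from the general scheme developed in this paper: one must invoke Osin's theorem that a finitely generated solvable group of zero uniform exponential growth is virtually nilpotent \cite{O}, in the form extended to finitely generated elementary amenable groups in \cite{O1}. Once this is granted, combining the four arrows closes the cycle and establishes the equivalence of (a)–(d) for every finitely generated elementary amenable group $G$, which completes the proof.
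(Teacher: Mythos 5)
Your proposal is correct and matches the paper's argument: the theorem is obtained exactly by combining Proposition \ref{exp} (for the implication from polynomial growth to vanishing entropy), the trivial inequality $\lambda(G)\leq h_{alg}(\id_G)$ from \eqref{GP}, Osin's results \cite{O,O1} for the passage from $\lambda(G)=0$ to virtual nilpotency, and Gromov's Theorem \ref{GT} (of which only the Wolf--Bass direction is needed) to close the cycle. The paper states this in one line; your write-up simply makes the cycle of implications explicit in the same way.
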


This theorem shows that the pair $\mathcal G=\{\mbox{elementary amenable groups}\}$ and $\Phi =\mathcal{I} d$ satisfies simultaneously MP and GP. In other words it proves that the three conditions in \eqref{osin-eq} are all equivalent when $G$ is an elementary amenable finitely generated group and $\phi=\id_G$. 

\subsection{Adjoint algebraic entropy}\label{aent-sec}

We recall here the definition of the adjoint algebraic entropy $\ent^\star$ and we state some of its specific features not deducible from the general scheme, so beyond the ``package" of general properties coming from the equality $\ent^\star=h_{\mathfrak{sub}^\star}$ such as Invariance under conjugation and inversion, Logarithmic Law, Monotonicity for factors (these properties were proved in \cite{DG-islam} in the general case and previously in \cite{DGS} in the abelian case applying the definition).

\medskip
In analogy to the algebraic entropy $\ent$, in \cite{DGS} the adjoint algebraic entropy of endomorphisms of abelian groups $G$ was introduced ``replacing" the family $\mathcal F(G)$ of all finite subgroups of $G$ with the family $\mathcal C(G)$ of all finite-index subgroups of $G$. The same definition was extended in \cite{DG-islam} to the more general setting of endomorphisms of arbitrary groups as follows.
Let $G$ be a group and $N\in \mathcal C(G)$. For an endomorphism $\phi:G\to G$ and $n\in\N_+$, the \emph{$n$-th $\phi$-cotrajectory of $N$} is $$C_n(\phi,N)=N\cap\phi^{-1}(N)\cap\ldots\cap\phi^{-n+1}(N).$$ %The \emph{$\phi$-cotrajectory of $N$} is $$C(\phi,N)=\bigcap_{n\in\N}\phi^{-n}(N).$$ Then $C(\phi,N)$ is the maximum $\phi$-invariant subgroup of $N$.
%Since the map induced by $\phi^n$ on the partitions $\{\phi^{-n}(N)g:g\in G\}\to \{Ng:g\in G\}$ is injective, it follows that $\phi^{-n}(N)\in\mathcal C(G)$ for every $n\in\N$. Therefore $C_n(\phi,N)\in \mathcal C(G)$ for every $n\in\N_+$, because $\mathcal C(G)$ is closed under finite intersections. 
The \emph{adjoint algebraic entropy of $\phi$ with respect to $N$} is 
$$
H^\star(\phi,N)={\lim_{n\to \infty}\frac{\log[G:C_n(\phi,N)]}{n}}.
$$
This limit exists as $H^\star(\phi,N)=h_\Se(\mathcal C(\phi),N)$ and so Theorem \ref{limit} applies.
The \emph{adjoint algebraic entropy of $\phi$} is $$\ent^\star(\phi)=\sup\{H^\star(\phi,N):N\in\mathcal C(G)\}.$$

%We collect in this subsection examples and fundamental results from \cite{DGS} about the adjoint algebraic entropy in the abelian case.
%%
%\begin{Example}
%Let $G$ be an abelian group.
%\begin{itemize}
%\item[(a)] Let $m\in\Z$ and $\varphi_m:G\to G$ the endomorphism of $G$ defined by $x\mapsto mx$ for every $x\in G$. Then $\ent^\star(\varphi_{m})=0$, since all subgroups of $G$ are $\varphi_{m}$-invariant and so Lemma \ref{phi-inv} applies.
%\item[(b)] If $\End(G)\subseteq \mathbb Q$, then $\ent^\star(\phi)=0$ for every endomorphism $\phi:G\to G$.
%\end{itemize}
%\end{Example}

The values of the adjoint algebraic entropy of the Bernoulli shifts were calculated in \cite[Proposition 6.1]{DGS} applying \cite[Corollary 6.5]{G0} and the Pontryagin duality; a direct computation can be found in \cite{G}. So, in contrast with what occurs for the algebraic entropy, we have:

\begin{Example}[Bernoulli shifts]\label{beta*}
For $K$ a non-trivial group, $$\ent^\star(\beta_K)=\ent^\star({}_K\beta)=\infty.$$
\end{Example}

%As an application of Example \ref{beta*} we can now give an example witnessing the lack of continuity of the adjoint algebraic entropy under taking inverse limits. 
% 
%\begin{Example}\label{no-lim}\cite{DGS} 
%Let $p$ be a prime, $G=\Z(p)^\N$ and consider $\beta_{\Z(p)}:G\to G$. For every $i\in\N$, let $$H_i=\underbrace{0\times\ldots\times 0}_i\times\Z(p)^{\N\setminus\{0,\ldots,i-1\}}\subseteq G.$$ Each subgroup $H_i$ is $\beta_{\Z(p)}$-invariant. The induced endomorphism $\overline{\beta_{\Z(p)}}_i:G/H_i\to G/H_i$ has $\ent^\star(\overline{\beta_{\Z(p)}}_i)=0$, since $G/H_i$ is finite. Moreover $G=\displaystyle\lim_{\longleftarrow}G/H_i$, as $\{(G/H_i,p_i)\}_{i\in\N}$ is an inverse system, where $p_i:G/H_{i+1}\cong\Z(p)^{i+1}\to G/H_{i}\cong \Z(p)^i$ is the canonical projection for every $i\in\N$.
%By Example \ref{beta*} $\ent^\star(\beta_{\Z(p)})=\infty$, while $\sup_{i\in\N}\ent^\star(\overline{\beta_{\Z(p)}}_i)=0$.
%\end{Example}

As proved in \cite{DGS}, the adjoint algebraic entropy satisfies the Weak Addition Theorem, while the Monotonicity for invariant subgroups fails even for torsion abelian groups; in particular, the Addition Theorem fails in general. On the other hand, the Addition Theorem holds for bounded abelian groups:

\begin{Theorem}[Addition Theorem]\label{AT*} 
Let $G$ be a bounded abelian group, $\phi:G\to G$ an endomorphism, $H$ a $\phi$-invariant subgroup of $G$ and $\overline\phi:G/H\to G/H$ the endomorphism induced by $\phi$. Then $$\ent^\star(\phi)=\ent^\star(\phi\restriction_H)+\ent^\star(\overline\phi).$$
\end{Theorem}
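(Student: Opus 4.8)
The plan is to reduce to the two inequalities $\ent^\star(\phi)\le \ent^\star(\phi\restriction_H)+\ent^\star(\overline\phi)$ and the reverse one, handling each separately. The inequality $\ent^\star(\phi)\ge\max\{\ent^\star(\phi\restriction_H),\ent^\star(\overline\phi)\}$ already follows from the general scheme (Monotonicity for factors, applied both to the surjection $G\to G/H$ and, via the contravariant functor $\mathfrak{sub}^\star$, to the inclusion $H\hookrightarrow G$); but this is weaker than what we need. So the substantive work is to prove \emph{both} $\ent^\star(\phi)\le \ent^\star(\phi\restriction_H)+\ent^\star(\overline\phi)$ and $\ent^\star(\phi)\ge\ent^\star(\phi\restriction_H)+\ent^\star(\overline\phi)$. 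The first thing I would do is fix the exponent: since $G$ is bounded, there is $m\in\N_+$ with $mG=0$, hence $mH=0$ and $m(G/H)=0$, and all three groups are (possibly infinite) direct sums of cyclic groups of bounded order; this is what makes the finite-index subgroups manageable.

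For the upper bound $\ent^\star(\phi)\le \ent^\star(\phi\restriction_H)+\ent^\star(\overline\phi)$, fix $N\in\mathcal C(G)$. The natural move is to compare the cotrajectory $C_n(\phi,N)$ with cotrajectories computed in $H$ and in $G/H$. Write $\pi:G\to G/H$ for the quotient map. Set $N_H=N\cap H\in\mathcal C(H)$ and $\overline N=\pi(N)\in\mathcal C(G/H)$ (here one uses that $N$ has finite index so $\pi(N)$ does too). Then from $\phi^{-k}(N)\cap H=(\phi\restriction_H)^{-k}(N_H)$ and $\pi(\phi^{-k}(N))\supseteq \overline\phi^{-k}(\overline N)$ one gets, for each $n$,
\begin{equation*}
[G:C_n(\phi,N)]\le [H:C_n(\phi\restriction_H,N_H)]\cdot[G/H:C_n(\overline\phi,\overline N)]\cdot c,
\end{equation*}
where $c$ is a correction factor bounded independently of $n$ (this is where boundedness is essential: the ``defect'' between $\pi(\phi^{-k}(N))$ and $\overline\phi^{-k}(\overline N)$ is controlled by $H/(H\cap\text{something})$, a subquotient of the bounded group $H$, hence of size at most $m^{r}$ where $r$ is tied to the index $[G:N]$). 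Taking $\log$, dividing by $n$, and letting $n\to\infty$ kills the constant $c$ and yields $H^\star(\phi,N)\le H^\star(\phi\restriction_H,N_H)+H^\star(\overline\phi,\overline N)\le \ent^\star(\phi\restriction_H)+\ent^\star(\overline\phi)$; taking the sup over $N$ gives the upper bound.

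For the lower bound $\ent^\star(\phi)\ge \ent^\star(\phi\restriction_H)+\ent^\star(\overline\phi)$, I would start from subgroups $M\in\mathcal C(H)$ and $\overline L\in\mathcal C(G/H)$ that nearly realize $\ent^\star(\phi\restriction_H)$ and $\ent^\star(\overline\phi)$ respectively, lift $\overline L$ to $L=\pi^{-1}(\overline L)\in\mathcal C(G)$, and build a single $N\in\mathcal C(G)$ with $N\subseteq L$ and $N\cap H\subseteq M$ (for instance $N$ built from $M$ using the boundedness of $G$ to split off a complement, or simply $N=L\cap M'$ for a suitable finite-index $M'$ extending $M$ to $G$). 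Then $C_n(\phi,N)\subseteq C_n(\overline\phi,\overline L)$ pulled back and $C_n(\phi,N)\cap H\subseteq C_n(\phi\restriction_H,M)$, so a short-exact-sequence index count gives $[G:C_n(\phi,N)]\ge [H:C_n(\phi\restriction_H,M)]\cdot[G/H:C_n(\overline\phi,\overline L)]/c'$ with $c'$ again bounded uniformly in $n$; dividing by $n$ and passing to the limit yields $\ent^\star(\phi)\ge H^\star(\phi\restriction_H,M)+H^\star(\overline\phi,\overline L)$, and taking suprema finishes the proof. The main obstacle is precisely the uniform control of the correction factors $c,c'$: in an arbitrary abelian group the subquotients of $H$ appearing in these comparisons can be infinite, and the argument breaks down — this is exactly why the hypothesis that $G$ be bounded cannot be removed, and it is consistent with the remark in the text that the Addition Theorem fails in general.
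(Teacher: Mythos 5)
Your lower-bound argument is essentially sound, and in fact cleaner than you suggest: the exact index formula $[G:A]=[G/H:\pi(A)]\cdot[H:A\cap H]$ together with the containments $\pi(C_n(\phi,N))\subseteq C_n(\overline\phi,\overline L)$ and $C_n(\phi,N)\cap H=C_n(\phi\restriction_H,N\cap H)\subseteq C_n(\phi\restriction_H,M)$ gives $[G:C_n(\phi,N)]\geq[G/H:C_n(\overline\phi,\overline L)]\cdot[H:C_n(\phi\restriction_H,M)]$ with no correction factor $c'$ at all; and the construction of $N\in\mathcal C(G)$ with $N\subseteq L$ and $N\cap H\subseteq M$ does work for bounded $G$ (write $G/M$ as a direct sum of cyclic groups and split the finite subgroup $H/M$ off a finite-index complement). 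Note, however, that your parenthetical claim that $\ent^\star(\phi)\geq\ent^\star(\phi\restriction_H)$ ``already follows from the general scheme'' is false: the paper states explicitly that monotonicity for invariant subgroups fails for $\ent^\star$ even on torsion abelian groups, because $\mathfrak{sub}^\star$ sends the inclusion $H\hookrightarrow G$ to $N\mapsto N\cap H$, which is in general neither surjective nor an embedding; this inequality is recovered for bounded groups only through the construction just described.

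The upper bound is where the proposal breaks down. With $N_H=N\cap H$ and $\overline N=\pi(N)$ one has the exact identity $[G:C_n(\phi,N)]=[H:C_n(\phi\restriction_H,N_H)]\cdot[G/H:C_n(\overline\phi,\overline N)]\cdot c_n$, where $c_n=[C_n(\overline\phi,\overline N):\pi(C_n(\phi,N))]$, so the inequality you want is \emph{equivalent} to $\tfrac1n\log c_n\to0$, and your proof consists of asserting it. The justification offered---that the defect between $\pi(\phi^{-k}(N))$ and $\overline\phi^{-k}(\overline N)$ is a subquotient of a bounded group---only controls each single $k$ (each such defect has order at most $[G:N]$); but $c_n$ is the defect of the $n$-fold intersection, and the defect of an intersection is not bounded by the individual defects: the naive estimate is their product, $c_n\leq[G:N]^n$, which is exactly exponential and hence useless. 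Nor does bounded exponent bound the order of the relevant finite subquotient, since its rank may a priori grow with $n$. This unproved claim carries the entire content of the hard direction (it is the $\ent^\star$-analogue of the hard inequality in the Addition Theorem for $\ent$, whose proof in the torsion case is a substantial argument). For the record, the paper does not prove Theorem \ref{AT*} at all---it quotes it from the paper on the adjoint algebraic entropy---and the natural route uses two results already stated here rather than a direct cotrajectory count: apply the Bridge Theorem $\ent^\star(\phi)=\ent(\widehat\phi)$ to pass to the Pontryagin dual, observe that boundedness of $G$ makes $\widehat G$ a bounded (hence locally finite) torsion group on which $\ent=h_{alg}$, and then invoke the Addition Theorem for $h_{alg}$ on the dual sequence $0\to H^\perp\to\widehat G\to\widehat H\to0$, using $H^\perp\cong\widehat{G/H}$ and $\widehat G/H^\perp\cong\widehat H$. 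This is precisely why ``bounded'' is the right hypothesis, and it is the step your direct approach would have to replace.
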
 

The following is one of the main results on the adjoint algebraic entropy proved in \cite{DGS}. It shows that the adjoint algebraic entropy takes values only in $\{0,\infty\}$, while clearly the algebraic entropy may take also finite positive values.

\begin{Theorem}[Dichotomy Theorem]\label{dichotomy}\emph{\cite{DGS}}
Let $G$ be an abelian group and $\phi:G\to G$ an endomorphism. Then 
\begin{center}
either $\ent^\star(\phi)=0$ or $\ent^\star(\phi)=\infty$.
\end{center}
\end{Theorem}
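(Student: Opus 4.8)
The plan is to prove the dichotomy by showing that whenever the adjoint algebraic entropy of $\phi$ is nonzero, it must already be infinite; the crux is to produce, from a single finite-index subgroup $N$ witnessing $H^\star(\phi,N)>0$, a family of finite-index subgroups witnessing arbitrarily large (hence infinite) entropy. First I would reduce to a manageable situation: replacing $N$ by $\bigcap_{i<k}\phi^{-i}(N)$ changes only finitely much, so without loss of generality one may work with the chain of cotrajectories $C_n(\phi,N)$ and track the ``jump'' indices $j_n=[C_n(\phi,N):C_{n+1}(\phi,N)]$. The hypothesis $H^\star(\phi,N)>0$ says exactly that $\sum_{i<n}\log j_i$ grows linearly in $n$, equivalently that the indices $j_n$ do not eventually all equal $1$ — in fact that infinitely many of them exceed a fixed bound, or more precisely that $\frac1n\sum_{i<n}\log j_i$ stays bounded away from $0$.

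Next, the key algebraic step. Because $[G:N]<\infty$, the group $G/\mathrm{Core}(N)$ is finite, where $\mathrm{Core}(N)=\bigcap_{g}gNg^{-1}$; but $\phi$ need not respect this core, so instead I would pass to the (possibly infinite) intersection $N_\infty=\bigcap_{n}C_n(\phi,N)=\bigcap_{i\ge0}\phi^{-i}(N)$, which is the largest $\phi$-invariant subgroup contained in $N$. The quotient $\bar G=G/N_\infty$ carries an induced endomorphism $\bar\phi$ with $H^\star(\bar\phi,N/N_\infty)=H^\star(\phi,N)$, and now $\bar G$ is residually finite with respect to the cofinal-in-nothing... — more usefully, $\bar G$ is an inverse limit of the finite quotients $G/C_n(\phi,N)$. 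The point I want to extract is: if $H^\star(\phi,N)>0$ then for each $n$ one can find a further finite-index subgroup $N'\le N$ (for instance $N'=C_k(\phi,N)$ for suitable $k$, or a subgroup obtained by intersecting $N$ with a small-index normal subgroup of $G$ arising inside one of the finite quotients) whose cotrajectory indices $[C_m(\phi,N'):C_{m+1}(\phi,N')]$ dominate those of $N$ multiplicatively, giving $H^\star(\phi,N')\ge 2H^\star(\phi,N)$; iterating yields $\ent^\star(\phi)=\infty$. The abelian hypothesis enters decisively here: in an abelian group the lattice $\mathcal C(G)$ of finite-index subgroups, together with the behaviour of the functor $\mathfrak{sub}^\star$, lets one ``rescale'' a witness, e.g. by replacing $G$ with $G/pG$-type layers or by using that a bounded-index chain with a positive exponential rate can be amplified using products of copies of the relevant finite section (exploiting the Weak Addition Theorem mentioned in the excerpt).

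Concretely, the cleanest route I would attempt: reduce first to the case $G$ bounded, where the Addition Theorem (Theorem \ref{AT*}) and the explicit structure of bounded abelian groups as $\bigoplus$ of cyclic $p$-groups are available, show the dichotomy there by a direct computation on Bernoulli-type sections (Example \ref{beta*} shows the shift already forces $\infty$, and a positive rate essentially forces a shift-like sub-behaviour), and then bootstrap to arbitrary abelian $G$ by a limit/localization argument: if $H^\star(\phi,N)>0$, restrict attention to the $\phi$-invariant subgroup generated by a finite set detecting the growth, quotient out, and land in a bounded or near-bounded situation where the previous case applies.

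\medskip
The step I expect to be the main obstacle is the amplification itself — turning one witness $N$ with $H^\star(\phi,N)=c>0$ into witnesses with entropy $\ge mc$ for every $m$. A positive exponential growth rate of the cotrajectory indices does not obviously ``self-improve'': one must genuinely use that the ambient group is abelian (so that one can take sums of subgroups and invoke additivity-type behaviour of $[G:\,\cdot\,]$) and that $\mathcal C(G)$ is closed under the operations needed, rather than anything about a single orbit. Controlling how $\phi$ interacts with the passage to finite quotients $G/C_n(\phi,N)$ — and ensuring the amplified subgroups remain of finite index — is the delicate point; I would expect the proof in \cite{DGS} to handle it via a careful analysis of the ``jumps'' $j_n$ and a pigeonhole argument producing, inside some $C_n(\phi,N)$, a subgroup playing the role of the first coordinate of a Bernoulli shift, thereby dragging in the infinite value of Example \ref{beta*}.
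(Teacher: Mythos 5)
The survey states this theorem with a citation to \cite{DGS} and gives no proof, so the comparison has to be against the argument in \cite{DGS} itself. Your proposal correctly locates the crux --- a single witness $N\in\mathcal C(G)$ with $H^\star(\phi,N)=c>0$ must be ``amplified'' into witnesses of arbitrarily large entropy --- but it does not carry that step out, and the one concrete candidate you offer for the amplification demonstrably fails. Since $C_n(\phi,C_k(\phi,N))=C_{n+k-1}(\phi,N)$, one has $H^\star(\phi,C_k(\phi,N))=H^\star(\phi,N)$ for every $k$: passing to deeper cotrajectories of the same $N$ never increases the rate. Likewise $N_\infty=\bigcap_n C_n(\phi,N)$ and the induced endomorphism on $G/N_\infty$ reproduce exactly the same value, and the Weak Addition Theorem only computes the entropy of an external product $\phi_1\times\phi_2$, so none of the tools you actually invoke produces a subgroup $N'$ with $H^\star(\phi,N')\geq 2c$. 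Your fallback route (reduce to bounded $G$, then use Theorem \ref{AT*} and Example \ref{beta*}) is not a reduction either: the Addition Theorem splits $\ent^\star$ along an invariant subgroup but says nothing about why a finite positive value is impossible, and an arbitrary abelian group with $\ent^\star(\phi)>0$ (e.g.\ a torsion-free one) does not in any obvious way map onto or restrict to a bounded group still carrying positive adjoint entropy.

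The missing idea, which is the actual content of the proof in \cite{DGS}, is the following. Because $G/N$ is a finite abelian group, every witness can be refined to one of prime index, i.e.\ $N=\ker\chi$ for a surjective homomorphism $\chi:G\to\Z/p\Z$, so that $C_n(\phi,N)=\bigcap_{i<n}\ker(\chi\circ\phi^i)$ and the hypothesis $H^\star(\phi,N)>0$ says that the functionals $\chi\circ\phi^i$ become independent with positive density along the orbit. The amplification then consists in selecting, by a pigeonhole argument on the jump indices $j_n$, finitely many widely spaced functionals from this orbit and intersecting their kernels to form a new finite-index subgroup whose own cotrajectory accumulates independence at a strictly larger rate; iterating gives $\ent^\star(\phi)=\infty$. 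This selection lemma is precisely the step you flag as ``the main obstacle'' and leave unproved, so as written the proposal establishes nothing beyond the trivial inequality $\ent^\star(\phi)\geq H^\star(\phi,N)>0$; it is a plausible plan with the decisive lemma missing.
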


%\begin{Theorem}[Bridge Theorem]
%Let $\f: G\to G$ be an endomorphism of an abelian group. Then $\ent^\star(\f) = \ent(\widehat \f)$, where $\widehat \f: \widehat G \to \widehat G$ is the Pontryagin dual of $\f$. 
%\end{Theorem}

Applying the Dichotomy Theorem and the Bridge Theorem (stated in the previous section) to the compact dual group $K$ of $G$ one gets that for a continuous endomorphism $\psi$ of a compact abelian group $K$ either $\ent (\psi)=0$ or $\ent(\psi)=\infty$. In other words:

\begin{Corollary}      
If $K$ is a compact abelian group, then every endomorphism $\psi:K\to K$ with $0 < \ent (\psi) < \infty$ is discontinuous.
\end{Corollary}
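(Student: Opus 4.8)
The plan is to derive this corollary directly from the preceding two results: the Dichotomy Theorem for the adjoint algebraic entropy (Theorem \ref{dichotomy}) and the Bridge Theorem $\ent^\star(\f) = \ent(\widehat\f)$ for endomorphisms of abelian groups (the theorem stated just before the discussion of generalized shifts). First I would argue by contraposition: suppose $\psi:K\to K$ is a \emph{continuous} endomorphism of the compact abelian group $K$, and show that then $\ent(\psi)\in\{0,\infty\}$, so that $0<\ent(\psi)<\infty$ forces $\psi$ to be discontinuous.

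The key steps, in order, are as follows. Let $G=\widehat K$ be the Pontryagin dual of $K$, which is a discrete abelian group. Since $\psi$ is a continuous endomorphism of $K$, its dual $\widehat\psi:G\to G$ is a (well-defined) endomorphism of the abelian group $G$, and by Pontryagin duality $\widehat{\widehat\psi}$ is naturally identified with $\psi$ (using $\widehat{\widehat K}\cong K$). Now apply the Bridge Theorem to the endomorphism $\f:=\widehat\psi$ of the abelian group $G$: it gives $\ent^\star(\widehat\psi)=\ent(\widehat{\widehat\psi})=\ent(\psi)$. On the other hand, Theorem \ref{dichotomy} applied to the abelian group $G$ and its endomorphism $\widehat\psi$ yields that $\ent^\star(\widehat\psi)$ is either $0$ or $\infty$. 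Combining the two equalities, $\ent(\psi)\in\{0,\infty\}$, which is exactly the contrapositive of the claim.

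I do not expect a genuine obstacle here, since the statement is explicitly flagged in the text as an immediate consequence of the Dichotomy Theorem together with the Bridge Theorem. The only point requiring a small amount of care is the identification $\widehat{\widehat\psi}=\psi$ under the canonical isomorphism $K\cong\widehat{\widehat K}$, and the observation that continuity of $\psi$ is precisely what is needed to make $\widehat\psi$ a legitimate group endomorphism of $G=\widehat K$ to which the Bridge Theorem (stated for endomorphisms of abelian groups, with $\widehat{\ }$ landing in compact abelian groups) applies in the reversed direction. Once these bookkeeping matters are settled, the proof is a one-line composition of the two cited theorems.
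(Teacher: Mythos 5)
Your proposal is correct and follows exactly the paper's route: the corollary is obtained by combining the Bridge Theorem $\ent^\star(\widehat\psi)=\ent(\psi)$ (via Pontryagin duality applied to the discrete dual $G=\widehat K$) with the Dichotomy Theorem for $\ent^\star$, so that any continuous $\psi$ has $\ent(\psi)\in\{0,\infty\}$. The bookkeeping points you flag (the identification $\widehat{\widehat\psi}=\psi$ and the role of continuity in making $\widehat\psi$ well defined) are precisely the only details involved, and they are handled correctly.
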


\subsection*{Acknowledgements}
It is a pleasure to thank our coauthor S. Virili for his kind permission to anticipate here some of  the main 
results from \cite{DGV1}. Thanks are due also to J. Spev\'ak for letting us insert his example in item (b) of Example \ref{ex-jan}, and to L. Busetti for the nice diagrams from his thesis \cite{LoBu} used in the present paper.

\end{document}